\newcommand{\BO}{\mathcal{O}}
\newcommand{\EX}{\mathbb{E}}
\newcommand{\St}{\mathcal{K}_{\Delta t}}
\newcommand{\Zt}{\mathcal{S}_{\Delta t}}
\newcommand{\fe}[1]{f(t^{(#1)})} 
\newcommand{\fp}[1]{f^{(#1)}} 
\newcommand{\dt}{\Delta t} 
\newcommand{\intv}{\int_{-\infty}^\infty}
\newtheorem{definition}{Definition}[section]
\newtheorem{theorem}{Theorem}[section]
\newtheorem{remark}{Remark}
\def\ps@pprintTitle{%
	\let\@oddhead\@empty
	\let\@evenhead\@empty
	\let\@oddfoot\@empty
	\let\@evenfoot\@oddfoot
}
\begin{document}

\begin{frontmatter}



\title{Analysis of kinetic-diffusion Monte Carlo simulation and source term estimation scheme in nuclear fusion applications}


\author{Zhirui Tang\corref{cor1}\fnref{lab1}} 
\author{Julian Koellermeier\fnref{lab2,lab3}}
\author{Emil Løvbak\fnref{lab4}\corref{cor2}}
\author{Giovanni Samaey\fnref{lab1}\corref{cor2}}

\cortext[cor1]{Corresponding author: zhirui.tang@kuleuven.be} 
\cortext[cor2]{These authors contributed equally and share senior authorship.}
\fntext[extended]{This paper significantly extends a previously submitted short article by Tang et~al.\ (2025), which reported preliminary numerical results that are used in Section 5.6 of this work.}
\affiliation[lab1]{organization={Department of Computer Science, KU Leuven},
            city={Leuven},
            postcode={3001}, 
            country={Belgium}}

\affiliation[lab2]{organization={Bernoulli Institute, University of Groningen},
            city={Groningen},
            postcode={9747}, 
            country={The Netherlands}}
            
\affiliation[lab3]{organization={Department of Mathematics, Computer Science and Statistics,  Ghent University},
            city={Gent},
            postcode={9000}, 
            country={Belgium}}

\affiliation[lab4]{organization={Scientific Computing Center, Karlsruhe Institute of Technology},
	city={Karlsruhe},
	postcode={76131}, 
	country={Germany}}

\begin{abstract}
In plasma edge simulations, the behavior of neutral particles is often described by a Boltzmann–BGK equation. Solving this kinetic equation and estimating the moments of its solution are essential tasks, typically carried out using Monte Carlo (MC) methods.
However, for large-sized reactors, like ITER and DEMO, high collision rates lead to a substantial computational cost.
To accelerate the calculation, an asymptotic-preserving kinetic-diffusion Monte Carlo (KDMC) simulation method (Mortier et al., SIAM J. Sci. Comput., 2022) and a corresponding fluid estimation technique (Mortier et al., Contrib. Plasma Phys., 2022) have recently been proposed.  
In this work, we present a comprehensive analysis of the convergence of KDMC combined with the associated fluid estimation. The analysis consists of proving theoretical upper bounds for both KDMC and the fluid estimation, and numerical verifications of these bounds. 
In addition, we compare the analyzed algorithm with a purely fluid-based method using the fully kinetic MC method as a reference.
The algorithm consistently achieves lower error than the fluid-based method, and even one order of magnitude lower in a fusion-relevant test case. 
Moreover, the algorithm exhibits a significant speedup compared to the reference kinetic MC method. 
Overall, our analysis confirms the effectiveness of KDMC with the associated fluid estimation in nuclear fusion applications.

\end{abstract}



\begin{keyword}
error analysis \sep  kinetic-diffusion Monte Carlo \sep Boltzmann-BGK  \sep moment estimation 


\end{keyword}

\end{frontmatter}


\section{Introduction}

Kinetic equations are widely used to model the behavior of particles in various physical systems, such as radiation transport \cite{vassilievMonteCarloMethods2017a}, rarefied gas dynamics \cite{cercignaniRarefiedGasDynamics2000}, and nuclear fusion \cite{borodinFluidKineticHybrid2022a, stangebyPlasmaBoundaryMagnetic2000b}. 
In these applications, the collision rate of particles may be high, requiring a huge amount of computational resources.  
In this case, approximate fluid models can be used to speed up the simulation without too much loss of accuracy \cite{othmerDiffusionLimitTransport2000b, maesHilbertExpansionBased2023a}. 
Nevertheless, situations where the collision rates vary dramatically across the simulation domain result in the fluid model being inaccurate in low-collision regimes, while the kinetic model remains inefficient in high-collision regimes. 

There are three main approaches for handling the above situation. The first one is domain decomposition \cite{densmoreHybridTransportdiffusionMethod2007a, crouseillesHybridKineticFluid2004a, degondFluidSimulationsLocalized2012}, where the simulation domain is divided into a kinetic and a fluid part, and different methods are applied to each part. 
A second approach, rather than decomposing the domain, is to decompose the solution itself into a kinetic part and a fluid part \cite{dimarcoHybridMultiscaleMethods2008b, horstenHybridFluidkineticModel2020a, aydemirUnifiedMonteCarlo1994}.
Both approaches face problems of how to decompose either the domain or the distribution function. The efficiency and accuracy of both approaches strongly depend on the quality of the decomposition.
A third approach is provided by the asymptotic-preserving method, avoiding the need for an explicit decomposition by employing a single adaptive method, see e.g., \cite{ gabettaRelaxationSchemesNonlinear1997b, bennouneUniformlyStableNumerical2008a, buetAsymptoticPreservingScheme2007, dimarcoHighOrderAsymptoticpreserving2012a}. 
Our main interest lies in asymptotic-preserving Monte Carlo (APMC) methods \cite{mortierKineticDiffusionAsymptoticPreservingMonte2022, pareschiTimeRelaxedMonte2001a, lovbakMultilevelMonteCarlo2021, lovbakAcceleratedSimulationBoltzmannBGK2023}. 
Using Monte Carlo (MC) enables APMC to be dimension-independent, facilitates handling complex geometries, and prevents the need to fully resolve the velocity domain. 
This work is based on the recently designed APMC method, known as kinetic-diffusion MC (KDMC) \cite{mortierKineticDiffusionAsymptoticPreservingMonte2022}.

The works mentioned above focus on the solution of the kinetic model, but this solution is not always the final quantity of interest. For instance, in plasma edge simulations, the primary focus is often placed on the first three moments of the particle distribution function, 
which describe the density of ionized particles, and the amount of transferred momentum and energy \cite{horstenComparisonFluidNeutral2016c, mortierAdvancedMonteCarlo2020}. 
KDMC approximates the solution of the kinetic model by generating discrete particle trajectories. Based on these trajectories, \cite{mortierEstimationPostprocessingStep2022a} proposes a fluid estimation procedure for estimating the moments.
As the main contribution in this work, we analyze KDMC and the associated fluid estimation procedure. 
The analysis is performed in one dimension for simplicity; however, all derivations remain valid in higher dimensions.

The analysis is divided into two parts.
The first part is on the convergence of the KDMC simulation, with the standard MC (also called the kinetc MC) method used as a benchmark.
The 1-Wasserstein distance is employed to compare the distributions generated by the standard MC and the KDMC simulations.
The original work \cite{mortierKineticDiffusionAsymptoticPreservingMonte2022} contains an analysis based on the assumption that the particle motion is conditioned by fixed parameters, and a sharp error bound is derived.
However, in practical simulations, these conditions are difficult to satisfy, making the convergence challenging to observe.
In this work, we propose an alternative analysis that does not rely on fixed parameter assumptions.
Although the resulting error bound is comparatively looser, the corresponding convergence is clearly observed in numerical experiments.
Furthermore, our analysis demonstrates a property of KDMC that the error is small in low- and high-collision regimes, and relatively large in the intermediate regime.

The second part addresses the estimation error. 
The estimation consists of the kinetic part estimated by the standard MC method and the diffusive part estimated by the fluid estimation procedure. 
The estimation error is first decomposed into the kinetic and the diffusive parts, each analyzed separately. The estimation error of the kinetic part, affected by parameters in KDMC, is purely statistical, as standard MC is unbiased.  
The estimation error of the diffusive part arises from multiple sources, including statistical and deterministic errors, and therefore a bias exists. 

In numerical experiments, we illustrate each error individually. 
Due to the design of KDMC, the errors impact each other.
Hence, we propose dedicated experiments to isolate these errors.
KDMC, which is based on the hybridization of the standard MC method and a fluid-based method, is expected to offer higher computational efficiency than the former while achieving greater accuracy than the latter. 
To demonstrate this, the algorithm is compared with the fluid-based method, using the standard MC as a reference. 
The comparison is carried out on a homogeneous background test case and a fusion-relevant heterogeneous test case. 
Subsequently, the computational efficiency of the algorithm is evaluated against that of the standard MC, and a speed-up rate is derived.

The remainder of the paper is organized as follows. The theoretical background, including the underlying kinetic equation, the fluid model, the KDMC algorithm, and the fluid estimation, is introduced in Section \ref{sec: preliminary theory}. 
The simulation error is discussed in Section \ref{sec: KDMC analysis}, followed by the estimation error in Section \ref{sec: estimation sec}. The numerical experiments are presented in Section \ref{sec: Num Ex}.
Finally, in Section \ref{sec: conclusion}, we conclude the work and give an outlook on future research. 

\section{KDMC simulation and fluid estimation}\label{sec: preliminary theory} 
This section introduces KDMC and the associated fluid estimation, together with the theoretical background required for the subsequent analysis. We begin in Section \ref{sec: BGK} with the kinetic equation governing neutral particles at the plasma edge.
Based on appropriate scaling assumptions, we introduce a fluid model (also known as the fluid limit) of the kinetic equation in Section \ref{sec: fluid model}. The KDMC method and the fluid estimation, both relying on this fluid model, are then described in Sections \ref{sec: KDMC} and \ref{sec: fluid estimaion}, respectively. 

\subsection{Boltzmann-BGK equation for neutral particles}\label{sec: BGK}
In nuclear fusion applications, neutral particles can be modeled using the steady-state Boltzmann equation with the BGK operator \cite{horstenHybridFluidkineticModel2020a}. 
In computer simulations, however, a time-dependent Boltzmann-BGK equation is used if a particle-based method (such as the standard MC or KDMC) is applied, 
since there is no steady-state behavior for a single particle, and the system evolves toward the steady-state from an initial state. 
In \ref{appendix: steady-state solution}, two time-dependent simulations, the terminal-time and the time-integrated simulations, are compared to validate the simulation of the steady-state system by the time-dependent model. 
In this work, we focus on the one-dimensional time-dependent Boltzmann-BGK equation describing the transport of neutrals driven by the collisions between neutrals and plasma, which reads
\begin{equation}\label{eqn: BBGK}
\partial_t f(x,v,t) + v\partial_x f(x,v, t) = R(x)\left(M_p(v|x)\intv f(x,v', t)dv'-f(x,v,t)\right),   
\end{equation}
with the initial condition $f(x,v,0)=S(x,v).$
Here, $f(x,v, t)$ is the distribution of the neutral particles at position $x$ with velocity $v$ at time $t$, and $S(x,v)$ is a neutral particle source from plasma recombination. 
We consider only the charge-exchange collision, in which neutrals are scattered and acquire new velocities. The collision rate is given as $R(x)$.
The normalized drifting Maxwellian distribution is
\begin{equation}\label{eqn: maxwellian}
    M_p(v|x) = \frac{1}{\sqrt{2\pi\sigma^2_p(x)}}\exp\left(-\frac{1}{2}\frac{(v-u_p(x))^2}{\sigma_p^2(x)}\right),
\end{equation}
a normal distribution with mean $u_p(x)$ and variance $\sigma^2_p(x)=T(x)/m$, where $u_p(x)$ and $T(x)$ are the mean velocity and the temperature of plasma, respectively. The constant $m$ is the mass of an ion.

Instead of the distribution $f(x,v,t)$, one is typically interested in the mass, momentum, and energy sources (from the neutrals to the plasma), 
in which the main components are three time-integrated moments, defined as
\begin{align}\label{eqn: moments}
    m_0(x) = \int_0^{\Bar{t}}\intv f(x,v,t)dvdt, \ \ m_1(x) &= \int_0^{\Bar{t}} \intv vf(x,v,t)dv dt, \ \ \text{and} \ \ m_2(x) = \int_0^{\Bar{t}}\intv \frac{v^2}{2}f(x,v,t)dvdt.
\end{align}
The time integration up to the final time $\Bar{t}=\infty$ is consistent with the time-integrated simulation, which, as shown in \ref{appendix: steady-state solution}, yields lower errors than the time-terminal simulation. 
This simulation also aligns with the strategy used in the EIRENE code \cite{reiterEIRENEB2EIRENECodes2005a} (see the “Unbiased Estimators” section of its user manual). 
Correspondingly, we adopt the time-integrated quantity throughout this work.
In what follows, we refer to the process of obtaining a discrete representation of the distribution $f(x,v,t)$ as simulation, while the calculation of moments \eqref{eqn: moments} is termed estimation.

\subsection{Diffusive scaling and fluid limit}\label{sec: fluid model}
In large-scale nuclear fusion devices, like ITER, the collision rate in certain regions of the plasma edge is relatively high, and the distribution of the neutral particles is close to equilibrium, which enables the approximation of the Boltzmann-BGK equation by a fluid model.

The fluid model can be derived by the diffusive scaling assumption with an asymptotic expansion \cite{othmerDiffusionLimitTransport2000b, maesHilbertExpansionBased2023a, horstenHybridFluidkineticModel2020a}. Specifically, we assume that the quantities of the simulation background scale with a small parameter $\varepsilon$, and the solution of \eqref{eqn: BBGK}
can be expanded as $f(x,v,t)= f_0(x,v,t) + \varepsilon f_1(x,v,t) + \varepsilon^2 f_2(x,v,t) + \cdots$.

    
In the so-called diffusive scaling, we assume that the velocity of an individual plasma particle, \( v_p \sim \mathcal{O}(1/\varepsilon) \), is high, while the mean plasma velocity, \( u_p(x) \sim \mathcal{O}(1) \), is relatively low. 
This difference leads to the velocity variance of the plasma  $\sigma_p^2(x) = \int (v_p-u_p(x))^2 M_p(v|x)\, dv \sim \mathcal{O}(1/\varepsilon^2)$. 
In the high-collisional regime, the neutral particles are close to the equilibrium of the plasma particles, so the velocity of a neutral particle $v$ has the same order of magnitude as $v_p$. 
In addition, collisions with the rate $R\sim\BO(1/\varepsilon^2)$ dominate in this regime, meaning that particles collide frequently. A detailed description can be found in \cite{maesHilbertExpansionBased2023a}.

Based on the above diffusive scaling assumptions, taking the first order truncated expansion $f_f(x,v,t)=f_0(x,v,t)+\varepsilon f_1(x,v,t)$, and applying the Hilbert expansion \cite{harrisIntroductionTheoryBoltzmann2004}, we obtain the fluid model 
\begin{equation}\label{eqn: fluid-model}
    \partial_t\rho(x,t) + \partial_x \left(u_p(x)\rho(x,t)\right)-\partial_x\left(\frac{1}{R(x)}\partial_x \left(\sigma_p^2(x)\rho(x,t)\right)\right)=0,
\end{equation}
where $\rho(x,t)=\intv f_f(x,v,t)dv$ is the density of particles. The full derivation can be found in \cite{maesHilbertExpansionBased2023a}.
Substituting the truncated expansion $f_f(x,v,t)$ into moments \eqref{eqn: moments}, we have
\begin{align}
\label{eqn: m0}
    m_0(x) &\approx \int_0^{\Bar{t}}\rho(x,t)dt, \\
\label{eqn: m1}
    m_1(x) &\approx \int_0^{\Bar{t}}\rho(x,t)u_p(x) - \frac{1}{R(x)}\partial_x\left(\sigma^2_p(x)\rho(x,t)\right)dt, \\
        \label{eqn: m2}    
    m_2(x) &\approx \int_0^{\Bar{t}}\frac{1}{2}\left(u_p^2(x)+\sigma_p^2(x)\right)\rho(x,t)- \frac{1}{R(x)}\partial_x \left(u_p(x)\sigma_p^2(x)\rho(x,t)\right)dt,
\end{align}
where $\rho(x,t)$ is the solution of \eqref{eqn: fluid-model}.
The approach that estimates the moments \eqref{eqn: moments} by solving the fluid model \eqref{eqn: fluid-model} and calculating \eqref{eqn: m0}–\eqref{eqn: m2} is referred to as the fluid-based method or simply the fluid method.
In Section \ref{sec: num. error of fluid model}, we demonstrate that the approximate moments \eqref{eqn: m0}–\eqref{eqn: m2} differ from the true moments \eqref{eqn: moments} by an error of order $\BO(\varepsilon^2)$. We call this error the model error.

\subsection{Kinetic-Diffusion Monte Carlo}\label{sec: KDMC}

In this section, we introduce the simulation methods used in this work for computing the particle distribution $f(x,v,t)$ described by the Boltzmann-BGK equation \eqref{eqn: BBGK}. 
We begin in Section \ref{sec: kinetic simulation} with the standard MC method, referred to as the kinetic simulation. 
In Section \ref{sec: diffusion simulation}, we present a diffusion simulation based on the fluid model \eqref{eqn: fluid-model}. 
Finally, in Section \ref{sec: kd}, we introduce KDMC, which hybridizes the kinetic and the diffusion simulations.



\subsubsection{Kinetic simulation}\label{sec: kinetic simulation}
With an MC method, the time-integrated particle distribution is approximated by summing Dirac measures associated with the discretized trajectory of particles.
To simulate the discretized trajectories, we consider the $i$-th particle with $i=1, 2, \cdots, I$ with weight $w_i$, where $I$ is the number of particles. 
The weight indicates the fraction of the total mass that the particle represents for the system. 
In particular, if the system has the initial density $\rho(x, t=0)$, the weight $w_i = \int \rho(x,t=0)dx/I$ for all $i$. 
Next, the simulation of this particle starts from the time $t=0$ at the position $x^0_i$ with the velocity $v^0_i$. 
After a flight time $\tau$ sampled from the exponential distribution $\text{Exp}(R(x^0_i))$, a charge-exchange collision occurs. 
At this point, the position of the particle updates to 
\begin{equation}\label{eqn: a kinetic step}
    x^1_i = x^0_i + \tau v^0_i,
\end{equation} 
and the particle is assigned a new velocity $v^1_i$ sampled from the Maxwellian distribution $M(v|x^1_i)$ given in \eqref{eqn: maxwellian}.
The movement \eqref{eqn: a kinetic step} is called a kinetic step.
Repeating the process up to the end of the simulation time $\bar{t}$, we obtain its trajectory denoted as $\{(x_i^k, v_i^k)\}_{k=0}^{K_i}$, where $K_i$ is the total number of states of the $i$-th particle.
The distribution obtained from the kinetic simulation in this section can then be assembled as in \eqref{eqn: K distribution}, using the trajectories of all particles.

\subsubsection{Diffusion simulation}\label{sec: diffusion simulation}
Starting from the fluid model \eqref{eqn: fluid-model}, we add and subtract the term $\partial_x\left(\partial_x(\frac{1}{R(x)})\sigma_P^2(x)\rho(x,t)\right)$ on the left-hand side, such that the fluid model becomes
\begin{equation}
    \partial_t \rho(x,t) + \partial_x\left(\left(u_p(x)+\partial_x\left(\frac{1}{R(x)}\right)\sigma_p^2(x)\right)\rho(x,t)\right) - \partial_{xx}\left(\frac{\sigma_P^2(x)}{R(x)}\rho(x,t)\right) = 0,
\end{equation}
a Fokker-Plank equation \cite{lapeyreIntroductionMonteCarlo2003a}. Its Ito stochastic differential equation (SDE) is
\begin{equation}\label{eqn: SDE of FK}
    dX_t = \left(u_p(X_t)+\partial_x\left(\frac{1}{R(X_t)}\right)\sigma_p^2(X_t)\right)dt + \sqrt{2\frac{\sigma_p^2(X_t)}{R(X_t)}}dW_t.
\end{equation}
with $W_t$ a Brownian motion. Let
$$
A(x) = u_p(x)+\partial_x\left(\frac{1}{R(x)}\right)\sigma_p^2(x) = u_p(x) + \frac{\partial D(x)}{\partial R(x)}\frac{\partial R(x)}{\partial x}, \quad \text{ and } \quad D(x) = \frac{\sigma_p^2(x)}{R(x)}. 
$$
Applying the Euler-Maruyama approximation,  this SDE indicates that the new position that the $i$-th particle travels a time $\dt$ from $x_i^k$ is 
\begin{equation}\label{eqn: SDE update}
    x_i^{k'} = x_i^k + A(x_i^k)\dt + \sqrt{2D(x_i^k)\dt}\xi,
\end{equation}
with $\xi\sim\mathcal{N}(0,1)$. 
The movement \eqref{eqn: SDE update} is called a diffusive step. 
When the scaling parameter $\varepsilon\rightarrow 0$, the true particle movement following Eq. \eqref{eqn: BBGK} converges to this step. 

\subsubsection{Kinetic-Diffusion simulation}\label{sec: kd}
The kinetic simulation described in Section~\ref{sec: kinetic simulation} is unbiased \cite{luxMonteCarloParticle2018a} but computationally expensive in highly collisional regimes. 
In contrast, the diffusion simulation introduced in Section~\ref{sec: diffusion simulation} offers a significant reduction in computational cost in these regimes, with only a minor loss in accuracy.
However, in low-collisional regimes, the fluid model \eqref{eqn: fluid-model} and thus the diffusion simulation no longer provides a good approximation, whereas the kinetic simulation is both valid and efficient.
In the intermediate regime, one must choose between accuracy and efficiency. 

These issues are handled in \cite{mortierKineticDiffusionAsymptoticPreservingMonte2022} by combining the advantages of the kinetic and the diffusion simulations. 
Specifically, we fix a time step size $\dt$. Consider the $i$-th particle. Within this time interval,
the particle executes first a kinetic step \eqref{eqn: a kinetic step} with time $\tau$ and then a diffusive step \eqref{eqn: SDE update} with $\theta=\dt-\tau$. If the flight time of the kinetic step $\tau > \dt$, no collision and diffusive step happens within the $\dt$. Only the kinetic step is performed.  
Intuitively, when the collision rate is high, i.e., $\tau \ll \dt$ on average, the particle will first perform a short kinetic step and then a dominant diffusive step for the rest of the time of size $\dt$. 
On the contrary, when the collision rate is low, i.e., $\tau \gg \dt$ on average, particles are primarily simulated by the kinetic step, and the influence of the inexact diffusive step is insignificant.  In both situations, at most two flights happen per time step of size $\dt$, and the simulation has only a small bias. 

Simulating the particle from $t=0$ to $\bar{t}=K\dt$, with $K$ the number of time steps, we obtain the trajectory of the $i$-th particle as 
\begin{equation}\label{eqn: trajectory}
    \{(x_i^0, v_i^0), ({x_i^0}', {v_i^0}'), (x_i^1, v_i^1), ({x_i^{1}}', {v_i^{1}}'), \cdots, (x_i^K, v_i^K), (x_i^{K'}, v_i^{K'}) \}.
\end{equation}
The state $({x_i^{k}}, {v_i^{k}})$ with the index $k$ is the starting state of the $k$-th kinetic step, 
and the state $({x_i^{k}}', {v_i^{k}}')$ with the index $k'$ is that of the $k$-th diffusive step, where $k=0,1, \ldots, K$. 
The weight $w_i = \int \rho(x,t=0)dx/I$  for all $i$ as stated in Section \ref{sec: kinetic simulation}. 
Repeating the above random process for all $I$ particles, the time-integrated distribution can be expressed as the Klimontovich distribution \cite{aydemirUnifiedMonteCarlo1994} 
\begin{align}\label{eqn: K distribution}
\begin{split}
     f(x) =\int_0^{\Bar{t}} \intv f(x,v,t)dv dt \approx \sum_{i=1}^I \sum_{k=0}^{K}w_i\delta(x-x_i^k) + \sum_{i=1}^I \sum_{k=0}^{K}w_i\delta(x-x_i^{k'}). 
\end{split}
\end{align}

The algorithm is illustrated graphically in Figure~\ref{fig: KDMC}. The corresponding pseudocode, including the fluid estimation procedure described in Section~\ref{sec: fluid estimaion}, is presented in Algorithm~\ref{alg: KDMC with f}. The analysis of KDMC is provided in Section~\ref{sec: KDMC analysis}.

\begin{figure}[h]
	\centering
	
	\tikzset{every picture/.style={line width=0.75pt}} 
	\begin{tikzpicture}[x=0.75pt,y=0.75pt,yscale=-1,xscale=1]
		
		\draw [color={rgb, 255:red, 0; green, 0; blue, 255 }  ,draw opacity=1 ]   (119.8,119.83) -- (137.29,119.83) ;
		\draw [color={rgb, 255:red, 0; green, 29; blue, 255 }  ,draw opacity=1 ] [dash pattern={on 0.84pt off 2.51pt}]  (137.29,119.83) -- (239.8,119.83) ;
		\draw [color={rgb, 255:red, 0; green, 29; blue, 255 }  ,draw opacity=1 ]   (239.8,119.83) -- (292.29,119.83) ;
		\draw [color={rgb, 255:red, 0; green, 29; blue, 255 }  ,draw opacity=1 ] [dash pattern={on 0.84pt off 2.51pt}]  (292.29,119.83) -- (359.8,119.83) ;
		\draw [color={rgb, 255:red, 0; green, 29; blue, 255 }  ,draw opacity=1 ]   (359.8,119.83) -- (479.8,119.83) ;
		\draw    (120.43,130.52) -- (479.43,130.52) ;
		\draw    (240.43,125.52) -- (240.43,136.52) ;
		\draw    (120.43,125.52) -- (120.43,136.52) ;
		\draw    (479.43,125.52) -- (479.43,136.52) ;
		\draw    (359.43,125.52) -- (359.43,136.52) ;
		
		\draw (409,135) node [anchor=north west][inner sep=0.75pt]  [font=\fontsize{0.71em}{0.85em}\selectfont]  {$\Delta t$};
		\draw (289,135) node [anchor=north west][inner sep=0.75pt]  [font=\fontsize{0.71em}{0.85em}\selectfont]  {$\Delta t$};
		\draw (171,135) node [anchor=north west][inner sep=0.75pt]  [font=\fontsize{0.71em}{0.85em}\selectfont]  {$\Delta t$};
		\draw (359.2,106) node [anchor=north west][inner sep=0.75pt]  [color={rgb, 255:red, 0; green, 0; blue, 255 }  ,opacity=1 ] [align=left] {\textit{{\footnotesize K}}};
		\draw (239.2,106) node [anchor=north west][inner sep=0.75pt]  [color={rgb, 255:red, 0; green, 0; blue, 255 }  ,opacity=1 ] [align=left] {\textit{{\footnotesize K}}};
		\draw (293.53,106) node [anchor=north west][inner sep=0.75pt]  [color={rgb, 255:red, 0; green, 0; blue, 255 }  ,opacity=1 ] [align=left] {\textit{{\footnotesize D}}};
		\draw (138.53,106) node [anchor=north west][inner sep=0.75pt]  [color={rgb, 255:red, 0; green, 0; blue, 255 }  ,opacity=1 ] [align=left] {\textit{{\footnotesize D}}};
		\draw (120.2,106) node [anchor=north west][inner sep=0.75pt]  [color={rgb, 255:red, 0; green, 0; blue, 255 }  ,opacity=1 ] [align=left] {\textit{{\footnotesize K}}};
	\end{tikzpicture}
	\caption{Illustration of KDMC. In the first time step of size $\dt$, the particle is in a high-collision regime such that the kinetic (K) step only lasts a short time and the diffusive (D) step dominates.  In the third time step of size $\dt$, the particle is in a low-collisional regime where only the K step is performed.}
	\label{fig: KDMC}
\end{figure}
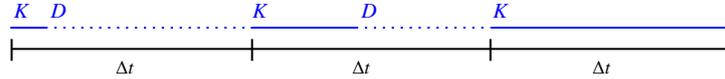

\subsection{Fluid estimation procedure for KDMC}\label{sec: fluid estimaion}

After the simulation, we calculate/estimate the moments \eqref{eqn: moments} based on the trajectories generated during the simulation \eqref{eqn: trajectory}. 
The contribution of the kinetic steps from $(x_i^k, v_i^k)$ to $({x_i^k}', {v_i^k}')$
can be estimated by MC estimators \cite{luxMonteCarloParticle2018a}. In our case,
the moments \eqref{eqn: moments} can be estimated as
\begin{align}\label{eqn: K moments}
\begin{split}
    m_l(x) =\int_0^{\bar{t}} m_l(x,t) dt =  \intv s_l(x, v) f(x,v) dv \approx \frac{1}{I}\sum_{i=1}^I \sum_{k=0}^{K} s_l\left(x_i^k, v_i^k, x_i^{k'}, v_i^{k'}\right) \delta(x-x_i^k). 
\end{split}
\end{align}
for $l=0, 1,$ and $2$, where $s_l(x,v)$ are score functions \cite{mortierAdvancedMonteCarlo2020} depending on the estimator, in other words, the estimating method. In this work, the track-length estimator \cite{mortierAdvancedMonteCarlo2020} is applied.
In fusion applications, discretization of the spatial domain is required \cite{reiterEIRENEB2EIRENECodes2005a}. The moments for the $j$-th cell $\mathcal{C}_j=[x_j, x_{j+1})$ are then 
\begin{equation}\label{eqn: K estimator}
    m_{l,j}=\int_{\mathcal{C}_j} m_l(x) dx \approx \frac{1}{I}\sum_{i=1}^I \sum_{k=0}^{K} s_l\left(x_i^k, v_i^k, x_i^{k'}, v_i^{k'}\right)  \mathbbm{1}_{\mathcal{C}_j}({x_i^k}),
\end{equation}
where $\mathbbm{1}_{\mathcal{C}_j}(x)$ is the indicator function of the $j$-th cell.
The process for obtaining the kinetic part of \eqref{eqn: K distribution} is called the kinetic simulation, and the estimation method given by \eqref{eqn: K estimator} is referred to as the MC estimator or the kinetic estimator.

For the diffusive simulation, in which the aggregate effect of many collisions is approximated with a single diffusive step, we only have incomplete trajectory information, and hence the MC estimator cannot produce a good estimate. 
To be specific, the position and velocity of each collision are required to produce unbiased estimates with the score function $s_l(x,v)$ in the MC estimator \eqref{eqn: K estimator}, 
while only the starting and end positions are provided in a diffusive step \eqref{eqn: SDE update}. 
To address this issue, we make use of the fluid estimation procedure proposed in \cite{mortierEstimationPostprocessingStep2022a}.

Since each diffusive step is designed to follow the particle-level dynamics of the fluid model \eqref{eqn: fluid-model} as discussed in Section \ref{sec: diffusion simulation}, we then assume that the ensemble of all diffusive steps collectively approximates the macroscopic fluid governed by \eqref{eqn: fluid-model}. 
The starting position of all diffusive steps gives the initial condition for the fluid motion. 
More precisely, the initial condition is
\begin{equation}\label{eqn: initial fluid estimator}
    \hat\rho_0(x) = \sum_{i=1}^I \sum_{k'=0}^{K'_i} w_i \delta(x-x_i^{k'}),
\end{equation}
where $K'_i$ is the number of diffusive steps of the $i$-th particle. 
Then, the moments \eqref{eqn: moments} are calculated by computing \eqref{eqn: m0}-\eqref{eqn: m2} after solving the fluid model \eqref{eqn: fluid-model} with the initial condition \eqref{eqn: initial fluid estimator}. 

A challenge in the fluid estimation is that the durations of all diffusive steps over all particles are different. 
As a result, the evolution time of the fluid model, denoted as $\theta$, needs to be chosen in a sensible way. 
Here, we take $\theta$ as the average duration of all diffusive steps.
That is, if the flight time of $i$-th particle in the $k$-th diffusive step is $\theta_i^{k}$, the empirical evolution time
\begin{equation}\label{eqn: fluid estimation time}
    \hat{\theta} = \sum_{i=1}^I \sum_{k'=0}^{K'_i} \frac{1}{I}\frac{1}{K_i'} \theta_i^{k}.
\end{equation}
The error in the fluid estimation, along with the choice of the empirical evaluation time, is analyzed in Section~\ref{sec: num. error of fluid model}. 
The complete algorithm of KDMC with the combined kinetic and fluid estimation described in this section is summarized in Algorithm \ref{alg: KDMC with f},
where the kinetic part and the fluid part moments are denoted as $m^{\text{kinetic}}$ and $m^{\text{fluid}}$, respectively.

\begin{algorithm}[h]
\caption{KDMC simulation with the associated fluid estimation procedure}
\label{alg: KDMC with f}
\begin{algorithmic}[1]
\renewcommand{\algorithmicrequire}{\textbf{Input:}}
\renewcommand{\algorithmicensure}{\textbf{Output:}}
\Require \parbox[t]{\linewidth}{Simulation time $\bar{t}$, time step $\Delta t$, number of particles $I$,\\
initial states $(x_i^0, v_i^0)$ for $i=1,\ldots,I$.}
\Ensure Moments $m$

\State $K \gets \bar{t} / \Delta t$ \Comment{Number of time steps}
\For{$i = 1$ to $I$}
    \For{$k = 0$ to $K-1$}
        \State $\tau \sim \text{Exp}\left(R(x_i^k)\right)$
        \State $\tau \gets \min(\tau, \Delta t)$ \Comment{Time of kinetic step}
        \State $x_i^{k'} \gets x_i^k + \tau v_i^k$ \Comment{Kinetic step (Eq. \eqref{eqn: a kinetic step})}

        \State $\theta_i^k \gets \max(\Delta t - \tau, 0)$ \Comment{Time of diffusive step}
        \If{$\tau < \Delta t$} \Comment{If diffusion step exists}
            \State Sample $v_i^{k'} \sim M\left(v | x_i^{k'}\right)$ and $v_i^{k+1}\gets v_i^{k'}$  \Comment{New velocity after collision}
            \State Sample $\xi \sim \mathcal{N}(0, 1)$
            \State $x_i^{k+1} \gets x_i^{k'} + A(x_i^{k'}) \theta_i^k + \sqrt{2 D(x_i^{k'}) \theta_i^k} \xi$ \Comment{Diffusion step (Eq.\eqref{eqn: SDE update})}
            \State Store $x_i^{k'}$ for the fluid estimation 
        \Else \Comment{If no diffusive step}
            \State $x_i^{k+1} \gets x_i^{k'}$, $v_i^{k'} \gets v_i^k$, and $v_i^{k+1} \gets v_i^k$
        \EndIf
        \State \label{line: mc_estimator} $m^{\text{kinetic}} \gets m^{\text{kinetic}} + \text{estimate}\left((x_i^k, v_i^k),(x_i^{k'}, v_i^{k'})\right)$ \Comment{Kinetic part moments with MC estimator}
        \State Store $\theta_i^k$ for the fluid estimation
    \EndFor
\EndFor

\State Construct the initial condition $\hat\rho_0(x)$ using all $x_i^{k'}$ as Eq. \eqref{eqn: initial fluid estimator}. \Comment{Fluid estimation}

\State Compute the empirical evolution time $\hat{\theta}$ using all $\theta_i^k$ as Eq. \eqref{eqn: fluid estimation time}.

\State \parbox[t]{\linewidth}{Solve the fluid model \eqref{eqn: fluid-model} with the initial condition $\hat\rho_0(x)$ up to the time $\hat{\theta}$, \\
and estimate fluid part moments $m^{\text{fluid}}$ as \eqref{eqn: m0}-\eqref{eqn: m2}.}
\State The total moments are given by $m= m^{\text{kinetic}} + m^{\text{fluid}}$.

\end{algorithmic}
\end{algorithm}

\section{Error of Kinetic-Diffusion Monte Carlo Simulation}\label{sec: KDMC analysis}
In this section, we derive an error bound for the KDMC simulation measured in the 1-Wasserstein ($W_1$) distance. 
The standard MC method introduced in Section \ref{sec: kinetic simulation} is used as a reference solution. 
In Section \ref{sec: WS}, we introduce two definitions of the $W_1$ distance in the one-dimensional case and explain its application by analyzing the movement of particles in the KDMC simulation. 
In Section \ref{sec: KDMC convergence}, we derive a bound for the error of the KDMC simulation, focusing on two limit cases: the diffusive scaling parameter $\varepsilon\rightarrow0$ and the time step $\dt\rightarrow0$. 
The KDMC simulation error is first derived at a fixed terminal time corresponding to the terminal-time simulation. Then, we demonstrate that the time-integrated simulation has a lower error.

\subsection{Wasserstein distance}\label{sec: WS}
To quantify the simulation error, we adopt the $W_1$ distance \cite{villaniOptimalTransportOld2009a} as our metric. Here, we introduce its two definitions for our one-dimensional case. 
\begin{definition}
Let $(\mathcal{X}, d)$ be a metric space. For any two probability measures $f, g$ on $\mathcal{X}$, the $W_1$ distance between $f$ and $g$ is defined as 
\begin{align}\label{Def: WS}
\begin{split}
W_1(f,g) 
&=\inf\left\{\EX[d(X,Y)], \mathrm{law}(X)=f, \mathrm{law}(Y)=g \right\}.
\end{split}
\end{align}
\end{definition}
The expectation $\EX$ of the metric $d$ is
$$\EX[d(X,Y)]=\iint d(x,y)f(dx)g(dy).$$
In our case, the law in \eqref{Def: WS} is the probability distribution of possible trajectories obtained from the kinetic or the KDMC simulation. 
Next, we explain how the particles' movement affects the $W_1$ distance.

Suppose the two empirical measures or simply distributions we compare are $f(x)$ with its samples $\{x_1, x_2, \cdots, x_I\}$ and  $g(y)$ with its samples $\{y_1, y_2, \cdots, y_I\}$, 
where $f(x)$ is the reference distribution, and the samples are positions of particles.
In the $W_1$ distance, we need to find the minimal cost that moves particles sampled from $g(y)$ such that $g$ and $f$ are identical distributions.  
A direct upper bound for $W_1$ is found by moving $y_i$ to $x_i$ for all $i = 0, 1, \cdots, I$, and the distance is $\sum_id(x_i, y_i)$. Taking the metric $d(x,y)=||x-y||_1$ and substituting into \eqref{Def: WS}, we have 
\begin{equation}\label{eqn: WS inequality}
   W_1(f, g)= \inf\{\EX[||x - y||_1]\} \leq\lim_{I\rightarrow\infty}\frac{1}{I}\sum_{i=1}^I |x_i - y_i|\approx\frac{1}{I}\sum_{i=1}^I |x_i - y_i|. 
\end{equation}
The equality holds when $x_i$ and $y_i$ are sorted in ascending order in one-dimensional cases. The bound in \eqref{eqn: WS inequality} without sorting may overestimate the true $W_1$ distance.
In the error derivation of Section \ref{sec: KDMC local error term}, some properties of the distributions are used to find a sharper upper bound than \eqref{eqn: WS inequality}. 

Alternatively, the $W_1$ distance can be defined in terms of the cumulative distribution functions (CDF).
\begin{definition}
Suppose $f(x)$ and $g(x)$ are normalized probability distributions on $\mathbb{R}$, and $F(x)$ and $G(x)$ are their CDFs, respectively. The $W_1$ distance between $f(x)$ and $g(x)$ is 
\begin{align}\label{Def: WS-2}
\begin{split}
W_1(f,g)&= \int_{\mathbb{R}} |F(x)-G(x)|dx.
\end{split}
\end{align}
\end{definition}
\begin{remark}\label{rek: ws computing}
Both definitions apply to the so-called balanced problem, where the total mass of the two distributions \( f \) and \( g \) is equal. This condition holds in our application, as no particles disappear (i.e., no ionization) and no new particles are born during the simulation. Consequently, the transport between \( f \) and \( g \) is mass-conserving, which ensures the well-posedness of the two definitions. 
Moreover, the distribution governed by the Boltzmann--BGK equation~\eqref{eqn: BBGK} is not necessarily normalized. 
Therefore, the distributions \( f(x) \) and \( g(x) \) should first be normalized for computing the $W_1$ distance. 
For readability, we omit the explicit notation of this normalization in the manuscript.

\end{remark}


\subsection{Convergence of KDMC}\label{sec: KDMC convergence}
We now derive the general expression of the $W_1$ distance between the distributions generated from the kinetic and the KDMC simulations. 

We denote the reference distribution obtained from the kinetic simulation at time $t^{(k+1)}=(k+1)\Delta t$ as $\fe{k+1}$ and its approximation obtained from KDMC as $\fp{k+1}$.
If we define $\St$ as the operator of applying the kinetic simulation to a distribution with one time step $\dt$ and $\Zt$ as that of the KDMC simulation, we have
$$\fe{k+1}=\St(\fe{k}) \text{ and } \fp{k+1}=\Zt(\fp{k}).$$
Note that only one diffusive step \eqref{eqn: SDE update} occurs in KDMC, whereas multiple kinetic steps \eqref{eqn: a kinetic step} could happen in the single time step $\dt$ in the kinetic simulation.
The $W_1$ distance between these two simulations at time $t^{(k+1)}$ is then 
\begin{align}
\nonumber
    W_1\!\left(\fe{k+1}, \fp{k+1}\right) &= W_1\!\left(\St(\fe{k}), \Zt(\fp{k})\right) \\ \label{eqn: 3_2_1}
    &\leq W_1\!\left(\St(\fe{k}), \St(\fp{k})\right) + W_1\!\left(\St(\fp{k}), \Zt(\fp{k})\right),  
\end{align}
where the triangle inequality of $W_1$ distance \cite{villaniOptimalTransportOld2009a} is applied. 
The first term in \eqref{eqn: 3_2_1} is the difference of applying the reference scheme, that is, the kinetic simulation, to $\fe{k}$ and $\fp{k}$, resulting in the different initial conditions to the time step. 
We show in Section \ref{sec: KDMC global error term} that this term is bounded by the global error at the previous time step $t^{(k)}$.
The second term in \eqref{eqn: 3_2_1} captures the error of performing one time step of the kinetic and the KDMC simulation on the same distribution $\fp{k}$, and thus corresponds to the local error of KDMC compared to the kinetic simulation. 
This term is studied in Section \ref{sec: KDMC local error term}, particularly in the limits: 
$\varepsilon\rightarrow0$, and $\dt\rightarrow0$. 
Finally, we provide a bound for the $W_1$ distance between the kinetic and the KDMC simulations in Section \ref{sec: sub kdmc error}.



\subsubsection{Global error term}\label{sec: KDMC global error term}
This section shows that the first term in \eqref{eqn: 3_2_1}, i.e., 
\begin{equation} \label{eqn: ws global}
    W_1\!\left(\St(\fe{k}), \St(\fp{k})\right),    
\end{equation}
is bounded by $W_1\!\left(\fe{k}, \fp{k}\right)$, which is the global error from the previous step. As a result, this term represents the propagation of the global error.

The expression \eqref{eqn: ws global} indicates the $W_1$ distance between distributions produced 
by the kinetic simulation with the initial distributions $\fe{k}$ and $\fp{k}$, respectively, up to time $\dt$. 
The kinetic simulation is unbiased, which means the resulting distribution represents the true solution of the Boltzmann-BGK equation \eqref{eqn: BBGK} plus a statistical error that becomes negligible if a large enough number of particles is used. 
We first assume that an infinite number of particles is used so that the statistical error vanishes, and leave the discussion of statistical error to Section \ref{sec: sub kdmc error}. 
With this assumption, the operator $\St(f)$ is equivalent to evolving the initial distribution $f$ with time $\dt$ governed by the Boltzmann-BGK equation \eqref{eqn: BBGK}. 
Then, the following theorem shows that the operator $\St$ is contractive under the $W_1$ metric, thereby providing the error bound for \eqref{eqn: ws global}.
\begin{theorem}\label{thm: ws global error bound}
Let $f_0^1, f_0^2$ be two initial distributions of particles. With an infinite number of particles, $f^1_{\dt}=\St(f_0^1)$ and $f^2_{\dt}=\St(f_0^2)$ are corresponding solutions of the Boltzmann-BGK equation \eqref{eqn: BBGK} at the time $\dt$. Then
\begin{equation} \label{eqn: WS Lip}
    W_1\!\left(f^1_{\dt}, f^2_{\dt}\right) \leq W_1\!\left(f^1_0, f^2_0\right),
\end{equation}
\end{theorem}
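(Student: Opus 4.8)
The plan is to exploit the probabilistic (coupling) definition~\eqref{Def: WS} of the $W_1$ distance rather than the CDF definition~\eqref{Def: WS-2}, since the Boltzmann--BGK dynamics~\eqref{eqn: BBGK} act naturally on particle trajectories. First I would take an optimal coupling $(X_0^1, X_0^2)$ of the two initial distributions $f_0^1$ and $f_0^2$, so that $\EX[|X_0^1 - X_0^2|] = W_1(f_0^1, f_0^2)$ (such an optimal coupling exists in one dimension by the monotone rearrangement, as noted after~\eqref{eqn: WS inequality}). The key idea is then to evolve \emph{both} particles under the \emph{same} realization of the randomness driving the kinetic simulation over the time interval $[0,\dt]$: the same underlying Poisson clock for collision times and the same uniform random variables used to sample post-collision velocities from the Maxwellian $M_p(v\mid x)$. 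This produces a coupling of the two evolved distributions $f_\dt^1 = \St(f_0^1)$ and $f_\dt^2 = \St(f_0^2)$, and by~\eqref{Def: WS} it suffices to show that this particular coupling does not increase the expected $L^1$ distance, i.e. $\EX[|X_\dt^1 - X_\dt^2|] \le \EX[|X_0^1 - X_0^2|]$.

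The main work is to show the distance is non-expansive along the coupled dynamics. Between collisions a particle moves ballistically, $\dot x = v$; since the two coupled particles carry the \emph{same} velocity after any shared collision event (the Maxwellian $M_p(v\mid x)$ that resets the velocity depends on position, so here one must be careful — see below), their separation $|X_t^1 - X_t^2|$ evolves by $\frac{d}{dt}|X_t^1 - X_t^2| = \mathrm{sgn}(X_t^1 - X_t^2)(v_t^1 - v_t^2)$. The cleanest route is to argue that the kinetic flow map over $[0,\dt]$ is almost surely $1$-Lipschitz in the initial position for fixed randomness, and then take expectations. Concretely I would: (i) couple the collision times via a single Poisson process with the position-dependent rate $R(x)$ — here one should couple by the same \emph{standard} Poisson process time-changed by the integral of $R$ along each trajectory, or more simply invoke that $R$ enters only through the law of the flight time and couple the flight times so the particle closer to a high-$R$ region collides first; (ii) between collisions, note the map $x\mapsto x+\tau v$ is an isometry in $x$ for fixed $\tau,v$; (iii) at a collision, couple the two fresh velocities by the same uniform draw pushed through the (position-dependent) inverse CDF of the Maxwellian, and bound the resulting velocity mismatch; (iv) integrate up the per-step contributions and let the number of collisions in $[0,\dt]$ be arbitrary. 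Finally, averaging over the shared randomness and over the optimal initial coupling gives $W_1(f_\dt^1, f_\dt^2) \le \EX[|X_\dt^1 - X_\dt^2|] \le \EX[|X_0^1 - X_0^2|] = W_1(f_0^1, f_0^2)$.

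The hard part will be handling the position-dependence of the collision rate $R(x)$ and of the Maxwellian $M_p(v\mid x)$: with a spatially varying $R$, two coupled particles at different positions have different collision-time laws, so a naive "same clock" coupling breaks the strict $1$-Lipschitz property, and with a spatially varying $M_p$ the post-collision velocities of the two coupled particles need not coincide, so the ballistic segments after a collision can \emph{diverge}. I expect the intended argument either (a) restricts attention to the homogeneous-background setting (constant $R$, $u_p$, $\sigma_p$), where the coupling is clean and the inequality is an equality-up-to-contraction, or (b) establishes the bound at the level of the PDE: writing the Boltzmann--BGK equation as a linear transport-plus-relaxation semigroup and checking that it is a contraction on the space of probability measures with the $W_1$ metric (equivalently, that its adjoint acts as a contraction on $1$-Lipschitz test functions $\varphi$, via $\frac{d}{dt}\EX[\varphi(X_t)]$ and a Grönwall-type estimate using that $v\partial_x\varphi$ plus the averaging in $v$ does not amplify the Lipschitz seminorm). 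I would pursue route (a) for the coupling intuition and, if needed for full generality, back it up with route (b), flagging any additional smallness or monotonicity assumptions on $R$, $u_p$, $\sigma_p$ that the clean statement~\eqref{eqn: WS Lip} implicitly requires.
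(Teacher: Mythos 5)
Your fallback ``route (b)'' is essentially what the paper does: its proof writes the solution of \eqref{eqn: BBGK} in the mild (Duhamel) formulation, bounds the gain term using the claim that the projection $f \mapsto M_p[f]$ is non-expansive in $W_1$, treats the free-streaming part via translation invariance of $W_1$, and closes with Gr\"onwall's inequality, obtaining the explicit contraction factor $\exp\left(1-e^{-R\dt}-R\dt\right)\leq 1$; the computation is carried out for a homogeneous background $R(x)=R$, with the heterogeneous mild formulation relegated to the appendix. So the PDE-level argument you defer to is the paper's actual proof, but you do not carry out its two key steps: the non-expansiveness of the $v$-averaging under $W_1$ and the Gr\"onwall estimate itself.

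Your primary ``route (a)'', the trajectory coupling, is a genuinely different argument, but as sketched it has a gap that survives even in the homogeneous setting you propose restricting to. An optimal $W_1$ coupling of $f_0^1$ and $f_0^2$ pairs particles by position (monotone rearrangement), and nothing forces the paired particles to carry the same initial velocity: the velocities are part of the initial data $f_0^i(x,v)$, not resampled at $t=0$. During the free flight before the first collision the separation changes at rate $|v^1-v^2|$, and with probability $e^{-2R\dt}>0$ neither particle collides in $[0,\dt]$, so the pathwise map is not $1$-Lipschitz and the ``same randomness'' coupling alone does not yield $\EX\left[|X^1_{\dt}-X^2_{\dt}|\right]\leq\EX\left[|X^1_0-X^2_0|\right]$; your step (ii) (ballistic motion is an isometry) only applies once the two particles share a velocity. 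To rescue the coupling you must either also couple the initial velocities to coincide --- which needs the two initial laws to have matching, position-independent velocity structure, an assumption beyond the theorem statement (though satisfied in the intended application, where velocities come from the same Maxwellian) --- or absorb the transport-induced growth into the collisional relaxation, which is precisely what the paper's Duhamel-plus-Gr\"onwall computation accomplishes. In short: your route (b) matches the paper but is left unexecuted, and your route (a) as stated does not close even in the homogeneous case without an extra hypothesis on the initial velocity marginals.
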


\begin{proof}
To prove this inequality, we first express the solution of the Boltzmann-BGK equation \eqref{eqn: BBGK} as the so-called mild formulation \cite{dipernaCauchyProblemBoltzmann1989} given by Duhamel's principle,
then apply the $W_1$ distance on $f^1_{\dt}$ and $f^2_{\dt}$. 
Afterwards, this inequality is obtained using Grönwall's inequality. 
In this proof, we assume a homogeneous background $R(x)=R$. 
The argument can, however, be performed in a heterogeneous background, for which the corresponding mild formulation can be found in \ref{appendix: mild formulation}. 
Distributions, such as $f_0^1$, depend on $(x,v)$, and the dependency is written explicitly if necessary.

The mild formulation of the Boltzmann-BGK equation \eqref{eqn: BBGK} (see \ref{appendix: mild formulation} for the derivation) reads 
\begin{equation}
    f(x,v,t) = e^{-Rt}f_0(x-vt, v) + \int_0^t Re^{-R(t-s)}M_p[f](x-v(t-s),v, s)ds,
\end{equation}
with 
\begin{equation*}
    M_p[f](x,v,t) = M_p(v|x)\intv f(x,v',t)dv',
\end{equation*}
and the initial condition $f(x,v,t=0)=f_0(x,v)$. 
Then,
\begin{equation}
    f^i_{\dt} = e^{-R\dt}f_0^i(x-v\dt, v) + \int_0^{\dt} Re^{-R(\dt-s)}M_p[f^i](x-v(\dt-s),v, s) ds.
\end{equation} 
with $i=1,2$.
In addition, we have
\begin{equation*}
    W_1\!\left(M_p[f^1], M_p[f^2]\right)\leq W_1\!\left(f^1, f^2\right),
\end{equation*}
since $M[f]$ projects the full phase-space density $f(x,v,t)$ onto a Maxwellian (in $v$) that depends only on the local density $\int f(x,v',t)dv'$ at time $t$. 
In other words, the projection $f\mapsto M[f]$ smooths the distribution by removing the variation in $v$, and a lower cost is required to transport the distribution after the projection. 

Next, applying the $W_1$ distance on $f^1_{\dt}$ and $f^2_{\dt}$, we have
\begin{equation}\label{eqn: gronwall assumption}
    W_1\!\left(f^1_{\dt}, f^2_{\dt}\right) \leq e^{-R\dt } W_1\!\left(f^1_0, f^2_0\right) + \int_0^{\dt} Re^{-R(\dt-s)}W_1\!\left(f^1_{s}, f^2_{s}\right) ds,
\end{equation}
with the triangle inequality. We also use the fact that
$W_1\!\left(f^1_0(x-v\dt,v), f^2_0(x-v\dt,v)\right) = W_1\!\left(f^1_0(x,v), f^2_0(x,v)\right)$ since the $W_1$ metric is isometry invariant \cite{panaretosStatisticalAspectsWasserstein2019}.

Finally, multiplying $e^{R\dt }$ on both sides of \eqref{eqn: gronwall assumption} and by Grönwall's inequality \cite{gronwallNoteDerivativesRespect1919}, the inequality \eqref{eqn: gronwall assumption} becomes
\begin{equation*}
    e^{R\dt } W_1\!\left(f^1_{\dt}, f^2_{\dt}\right)  \leq W_1\!\left(f^1_0, f^2_0\right) \exp\left(\int_0^{\dt}Re^{-R(\dt-s)}ds\right).
\end{equation*}
Further, we obtain
\begin{equation*}
    W_1\!\left(f^1_{\dt}, f^2_{\dt}\right)  \leq W_1\!\left(f^1_0, f^2_0\right)\exp\left(1-e^{-R\dt}-R\dt\right) \leq W_1\!\left(f^1_0, f^2_0\right). \qedhere
\end{equation*}
\end{proof}
Substituting $f^1_0 = \fe{k}$, $f^2_0=\fp{k}$, $f^1_{\dt} = \St(\fe{k})$, and $f^2_{\dt}=\St(\fp{k})$ into \eqref{eqn: WS Lip}, we have 
\begin{equation} \label{eqn: ws globl error term}
    W_1\!\left(\St(\fe{k}), \St(\fp{k})\right)    \leq W_1\!\left(\fe{k}, \fp{k}\right).
\end{equation}
Substituting \eqref{eqn: ws globl error term} into \eqref{eqn: 3_2_1}, we obtain
\begin{equation} \label{eqn: total simulation formula}
    W_1\!\left(\fe{k+1}, \fp{k+1}\right)  
    \leq W_1\!\left(\fe{k}, \fp{k}\right)  + W_1\!\left(\St(\fp{k}), \Zt(\fp{k}) \right).
\end{equation}
This inequality states that the global error at the $(k+1)$-th time step is bounded by the error from the previous time step plus the local error introduced during the current step.

\subsubsection{Local error term}\label{sec: KDMC local error term}
In this section, we discuss the local error term 
\begin{equation} \label{eqn: local err}
    W_1\!\left(\St(\fp{k}), \Zt(\fp{k})\right),
\end{equation}
in \eqref{eqn: 3_2_1}. 
The subsequent two theorems state that the local error term \eqref{eqn: local err} is bounded by $\BO(\varepsilon^2)$ as the diffusive scaling parameter $\varepsilon\rightarrow 0 $ 
and $\BO(\dt^2)$ as the time step $\dt\rightarrow 0 $, respectively.

\begin{theorem}
Let $\fp{k}$ be the distribution obtained from the KDMC simulation at time $t^{(k)}=k\Delta t$. 
Define $\St$ as the operator of applying the kinetic simulation to a distribution with one time step $\dt$ and $\Zt$ as that of the KDMC simulation.
As $\varepsilon\rightarrow 0 $,      
\begin{equation}\label{eqn: local error e}
    W_1\!\left(\St(\fp{k}), \Zt(\fp{k})\right) \leq  \BO(\varepsilon^2).
\end{equation}
\end{theorem}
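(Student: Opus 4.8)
The plan is to bound the local term $W_1\!\left(\St(\fp{k}), \Zt(\fp{k})\right)$ by combining an explicit coupling of the two one-step evolutions up to the first collision with a $W_1$ estimate, in the diffusive limit, between the resulting post-collision ``tail'' laws. Couple the kinetic and the KDMC simulations so that, starting from the common initial distribution $\fp{k}$, each particle carries the same state $(x_0,v_0)$ and samples the same first free-flight time $\tau\sim\mathrm{Exp}(R(x_0))$. On the event $\{\tau\ge\dt\}$ both schemes perform only the single kinetic flight $x_0\mapsto x_0+\dt\,v_0$ (no diffusive step is taken, cf.\ Algorithm~\ref{alg: KDMC with f}), hence coincide exactly, so this event contributes nothing to the bound; its probability $e^{-R(x_0)\dt}$ is moreover exponentially small when $R=\BO(1/\varepsilon^2)$. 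On the complement $\{\tau<\dt\}$ both schemes first execute the kinetic step to $x_1=x_0+\tau v_0$ and draw the post-collision velocity $v_1\sim M_p(v|x_1)$, and only then diverge: the kinetic scheme continues with the exact continuous-time BGK flow over the remaining time $\theta=\dt-\tau$, while KDMC applies one Euler--Maruyama step with coefficients $A(x_1)$, $D(x_1)$ over the same $\theta$. Conditioning on the shared history $(\tau,x_1,v_1)$ and using the triangle inequality for $W_1$ as in \eqref{eqn: 3_2_1}, the local error is bounded by the average over $(\tau,x_1,v_1)$ of the $W_1$ distance between these two tail displacement laws.

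The heart of the argument is to show that this tail distance is $\BO(\varepsilon^2)$. Inserting the diffusive scaling $R=R_0/\varepsilon^2$, $\sigma_p^2=\sigma_{p,0}^2/\varepsilon^2$, $u_p=\BO(1)$ gives $v_1=\BO(1/\varepsilon)$, $\tau=\BO(\varepsilon^2)$, $\theta=\dt-\BO(\varepsilon^2)$, and $A,D=\BO(1)$. The kinetic tail displacement from $x_1$ over time $\theta$ is a random, time-truncated sum of $\BO(1/\varepsilon^2)$ free-flight increments $\tau_j v_j$, each of size $\BO(\varepsilon)$, whence its variance is $\BO(1)$. By the diffusion limit of the linear BGK operator --- the same Hilbert-expansion mechanism underlying the fluid model of Section~\ref{sec: fluid model}, whose truncation remainder is $\BO(\varepsilon^2)$ (the model error quantified in Section~\ref{sec: num. error of fluid model}) --- the mean and variance of the kinetic tail match $A(x_1)\theta$ and $2D(x_1)\theta$, i.e.\ those of the KDMC diffusive step, up to $\BO(\varepsilon^2)$; in particular the $\BO(\varepsilon)$-scale ``initial free-flight layer'' $\tau_2 v_1$, present in the kinetic tail but absent from the pure diffusive step, only enters the variance at order $\sigma_p^2/R^2=\BO(\varepsilon^2)$ and is therefore harmless in $W_1$. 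One further needs the non-Gaussian part of the kinetic tail to be $\BO(\varepsilon^2)$ in $W_1$: this follows from a quantitative central-limit/Edgeworth estimate, exploiting the near-symmetry of the individual flight increments (their third cumulant is $\BO(\varepsilon^4)$, not $\BO(\varepsilon^3)$, since $v_j$ is Maxwellian), so that the skewness of the sum is $\BO(\varepsilon^2)$. Combining these facts places the kinetic tail within $\BO(\varepsilon^2)$ in $W_1$ of the Gaussian law $\mathcal N\!\left(A(x_1)\theta,\,2D(x_1)\theta\right)$ realised by the KDMC diffusive step, using $W_1(\mathcal N(a_1,b_1),\mathcal N(a_2,b_2))\le|a_1-a_2|+|\sqrt{b_1}-\sqrt{b_2}|$ to absorb the residual parameter mismatch.

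Finally, averaging the tail estimate over the law of $(\tau,x_1,v_1)$ only introduces a constant depending on $\dt$ and on uniform bounds for $u_p$, $\sigma_p^2$, $R$ and their spatial derivatives, while $\{\tau\ge\dt\}$ contributes zero; hence $W_1\!\left(\St(\fp{k}), \Zt(\fp{k})\right)\le\BO(\varepsilon^2)$. I expect the principal obstacle to be the tail comparison itself: obtaining the $\BO(\varepsilon^2)$ bound in $W_1$ --- not merely for a single moment --- between the continuous BGK flow and one Euler--Maruyama diffusion step, while handling that the flight increments are neither identically distributed (their parameters vary with position) nor fixed in number, and that the final flight is time-truncated. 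This amounts to a quantitative $W_1$ version of the diffusion limit of the linear BGK equation with an explicit $\BO(\varepsilon^2)$ remainder, for which the Hilbert-expansion results cited in Section~\ref{sec: fluid model} provide the template.
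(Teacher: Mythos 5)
Your proposal is sound in outline but follows a genuinely different route from the paper. The paper argues at the PDE level: it lets $f_f$ be the evolution of $\fp{k}$ purely by the biased diffusive step \eqref{eqn: SDE update} and $f_k=\St(\fp{k})$ the purely kinetic evolution, invokes the $\BO(\varepsilon^2)$ Hilbert-expansion model error between the Boltzmann--BGK equation and the fluid model, converts that density-level bound into a $W_1$ bound via the CDF definition \eqref{Def: WS-2} on a bounded domain, and then asserts that the hybrid $\Zt(\fp{k})$, being a mixture of an unbiased kinetic step and a biased diffusive step, lies no farther from $\St(\fp{k})$ than $f_f$ does. Your coupling over the shared first flight $(\tau,x_1,v_1)$, combined with convexity of $W_1$ over the mixing variables, actually makes that last, loosely argued domination step precise: the event $\{\tau\ge\dt\}$ contributes nothing, and on $\{\tau<\dt\}$ the local error reduces exactly to the conditional tail comparison. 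The price is that your key lemma --- the $\BO(\varepsilon^2)$ bound in $W_1$ between the BGK tail started from a point with Maxwellian velocity and the single Euler--Maruyama Gaussian --- is not the statement the paper reuses; the paper leans on the already-quantified PDE-level model error, whereas you need a per-trajectory, quantitative CLT/diffusion limit with explicit remainder (moment matching to $\BO(\varepsilon^2)$, third cumulant $\BO(\varepsilon^4)$, Edgeworth validity for a random, truncated, non-identically-distributed number of flights, starting from Dirac data in position). Your scaling computations for the mean, variance, and skewness are correct in the homogeneous case --- e.g.\ the exact covariance calculation gives a variance mismatch of order $\sigma_p^2/R^2=\BO(\varepsilon^2)$, and the Gaussian $W_1$ comparison then absorbs it --- and you flag the tail lemma honestly as the main obstacle; but as written it remains a sketch, and it is where the real work of your route lies, arguably heavier machinery than the paper's short argument. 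What your approach buys in exchange is a rigorous, trajectory-level justification of the step the paper leaves informal, at the cost of having to prove a quantitative Wasserstein diffusion limit rather than quoting the model error. Note also that, like the paper's own proof (which explicitly restricts to a homogeneous background in its contraction argument), your estimate of the drift is homogeneous in spirit; matching the heterogeneous correction $\partial_x\!\left(1/R(x)\right)\sigma_p^2(x)$ in $A(x)$ to order $\varepsilon^2$ would require additional care.
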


\begin{proof}
Denote $f_k$ as the solution of \eqref{eqn: BBGK} simulated by the kinetic simulation described in Section \ref{sec: kinetic simulation}, and $f_f$ as the solution of \eqref{eqn: fluid-model} simulated by the biased SDE \eqref{eqn: SDE update}, both evaluated at time $t^{(k)}$. To show that the local error term \eqref{eqn: local err} is bounded by $\BO(\varepsilon^2)$ as $\varepsilon\rightarrow0$,  we first find the $W_1$ distance between $f_k$ and $f_f$, and then bound \eqref{eqn: local err} by this distance.

As mentioned in Section \ref{sec: fluid model}, the approximation error between the Boltzmann-BGK \eqref{eqn: BBGK} and the fluid model \eqref{eqn: fluid-model} is $\BO(\varepsilon^2)$. 
Since $f_k$ is the numerical solution of \eqref{eqn: BBGK} with only the statistical error and $f_f$ is the numerical solution of \eqref{eqn: SDE update} with only the discretization error,  
we have $|f_k - f_f|=\BO(\varepsilon^2)$, if the statistical error and the discretization are assumed to be negligible. Assume the distribution be on the domain $\Omega=[a, b]$, then
\begin{equation*}
    F_k(x) - F_f(x) = \int_a^x f_k(y) - f_f(y) dy = \BO(\varepsilon^2),
\end{equation*}
where functions $F_k$ and $F_f$ are the CDFs of $f_k$ and $f_f$, respectively.
Further, the $W_1$ distance between $f_k$ and $f_f$ is 
\begin{equation}\label{eqn: WS app 1}
    W_1(f_k, f_f) = \int_a^b |F_k - F_f| dx = \int_a^b \left|\int_a^x f_k(y) - f_f(y) dy\right| dx = \BO(\varepsilon^2),
\end{equation}
where the second definition of the $W_1$ distance \eqref{Def: WS-2} is used. 

If $f_k$ is simulated with the initial distribution $\fp{k}$, then $f_k=\St(\fp{k})$. 
As for $\Zt(\fp{k})$, it is obtained by a combination of the biased SDE \eqref{eqn: SDE update} (diffusive step) and the unbiased kinetic simulation in Section \ref{sec: kinetic simulation} (kinetic step). 
Thus, the distance between $\Zt(\fp{k})$ and $\St(\fp{k})$ is less than the distance between $f_k=\St(\fp{k})$ and $f_f$, since $f_f$ is simulated purely by the biased SDE. Consequently, the local error 
\begin{equation*}
    W_1\!\left(\St(\fp{k}), \Zt(\fp{k})\right) \leq W_1(\St(\fp{k}), f_f) = W_1(f_k, f_f) = \BO(\varepsilon^2). \qedhere
\end{equation*}
\end{proof}

The following theorem provides a bound for the local error term \eqref{sec: KDMC local error term} with respect to $\dt$ as $\dt\rightarrow0$.
\begin{theorem}
Let $\fe{k}$ be the distribution obtained from the kinetic simulation at time $t^{(k)}=k\Delta t$. 
Define $\St$ as the operator of applying the kinetic simulation to a distribution with one time step $\dt$ and $\Zt$ as that of the KDMC simulation.
As $\dt\rightarrow0$,
\begin{equation}\label{eqn: local error dt}
W_1\!\left(\St(\fp{k}), \Zt(\fp{k})\right) \leq \BO(\frac{\dt^2}{\varepsilon^2}).
\end{equation}
\end{theorem}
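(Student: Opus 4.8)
The plan is to bound the local error term by comparing, over a single time step of size $\dt$, the particle displacement produced by one KDMC step (a kinetic sub-step of random length $\tau$ followed by a diffusive sub-step of length $\theta=\dt-\tau$) against the displacement produced by the kinetic simulation over the same $\dt$. Since the $W_1$ distance is controlled by the expected transport cost (the bound \eqref{eqn: WS inequality} obtained by moving each particle from its KDMC end position to its kinetic-simulation end position), it suffices to bound $\EX[|X^{\text{kin}}_{\dt} - X^{\text{KDMC}}_{\dt}|]$ where both trajectories start from the same state $(x,v)$ drawn from $\fp{k}$, coupled on the same randomness where possible (same first collision time $\tau$, same post-collision velocities). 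The strategy is then a Taylor/moment expansion in $\dt$: for small $\dt$, with high probability ($1-\BO(R\dt)=1-\BO(\dt/\varepsilon^2)$) no collision occurs in $[0,\dt]$ and the two schemes coincide exactly (pure kinetic step $x+v\dt$), contributing nothing; the leading contribution comes from the event that exactly one collision occurs, which has probability $\BO(R\dt)=\BO(\dt/\varepsilon^2)$, and on that event one compares the kinetic post-collision free flight of length $\dt-\tau$ against the single diffusive step of length $\theta=\dt-\tau$.

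First I would write both displacements explicitly. On the one-collision event, the kinetic simulation gives $X^{\text{kin}}_{\dt} = x + v\tau + v'(\dt-\tau)$ with $v'\sim M_p(\cdot\,|\,x+v\tau)$, while KDMC gives $X^{\text{KDMC}}_{\dt} = (x+v\tau) + A(x+v\tau)\theta + \sqrt{2D(x+v\tau)\theta}\,\xi$ with $\theta = \dt-\tau$. Subtracting the common kinetic sub-step $x+v\tau$, the per-step discrepancy is $v'\theta - A\theta - \sqrt{2D\theta}\,\xi$. I would then take conditional expectations: $\EX[v'] = u_p = A + \BO(\text{spatial derivative terms})$ and $\EX[(v')^2] = u_p^2 + \sigma_p^2$, while the diffusive step has mean $A\theta$ and variance $2D\theta = 2(\sigma_p^2/R)\theta$. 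The diffusive step is precisely the Euler–Maruyama matching of the first two moments of the post-collision free flight up to $\BO(\theta^2)$ corrections (this is exactly the design principle stated in Section~\ref{sec: diffusion simulation}), so the mismatch in the first moment is $\BO(\theta)\cdot(\text{bounded})$ and, crucially, $\theta = \dt-\tau \le \dt$. Multiplying the per-collision discrepancy $\BO(\dt)$ by the one-collision probability $\BO(\dt/\varepsilon^2)$ and noting the two-or-more-collision events contribute at still higher order in $\dt$, one arrives at $W_1\!\left(\St(\fp{k}),\Zt(\fp{k})\right) = \BO(\dt^2/\varepsilon^2)$.

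The key steps in order: (i) reduce the $W_1$ bound to a coupled expected displacement difference via \eqref{eqn: WS inequality}, coupling the two schemes on $\tau$ and on post-collision velocities; (ii) decompose the expectation according to the number of collisions in $[0,\dt]$, a Poisson-type event with rate $R = \BO(1/\varepsilon^2)$, so $\Pr[\text{one collision}] = R\dt\, e^{-R\dt} + \dots = \BO(\dt/\varepsilon^2)$ for the relevant $\dt$ range, and the zero-collision event contributes exactly zero; (iii) on the one-collision event, expand both displacements about the collision point $x+v\tau$ and show the leading discrepancy is $\BO(\theta) = \BO(\dt)$, using that the diffusive drift $A$ and diffusivity $D$ reproduce the mean and variance of a Maxwellian free flight by construction; (iv) collect $\BO(\dt)\times\BO(\dt/\varepsilon^2) = \BO(\dt^2/\varepsilon^2)$ and verify that multi-collision terms and the $\theta^2$ Taylor remainders are lower order.

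The main obstacle I anticipate is step (iii): making the claim "the diffusive step matches the post-collision free flight to first order in $\theta$" precise enough to yield genuinely $\BO(\theta)$ (hence $\BO(\dt)$) rather than $\BO(1)$ for the displacement discrepancy on the one-collision event. A naive bound $|v'\theta - A\theta - \sqrt{2D\theta}\,\xi|$ only gives $\BO(\sqrt{\theta})$ pointwise because of the $\sqrt{\theta}\,\xi$ term, and $\BO(\sqrt{\dt})\times\BO(\dt/\varepsilon^2) = \BO(\dt^{3/2}/\varepsilon^2)$ is not quite $\dt^2$. The resolution is that one must not bound pathwise but rather exploit the optimal-coupling freedom in $W_1$: the fluctuating parts (the Gaussian increment on the KDMC side and the random post-collision velocity fluctuation $v'-u_p$ on the kinetic side) have matched variances to leading order, so an optimal transport between the two conditional laws costs only $\BO(\theta)$ in the first moment (matched means, matched variances $\Rightarrow$ $W_1$ between the two one-dimensional laws is controlled by the $\BO(\theta^2)$ discrepancies in their moments, divided through appropriately). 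Handling this correctly — i.e., using a sharper $W_1$ bound than the crude diagonal coupling of \eqref{eqn: WS inequality}, as the authors flag is sometimes needed — is where the real work lies; everything else is bookkeeping in $\dt$ and $\varepsilon$.
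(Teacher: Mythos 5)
Your skeleton is essentially the paper's: couple the two schemes over one step, observe that on the no-collision event (probability $1-\BO(\dt/\varepsilon^2)$) they coincide exactly, and bound the collision event (probability $\BO(\dt/\varepsilon^2)$) by an $\BO(\dt)$ conditional transport cost. The paper simply asserts that conditional $\BO(\dt)$ bound from the fact that both flight times are at most $\dt$; you correctly sense that the Gaussian increment $\sqrt{2D\theta}\,\xi$ makes a naive pathwise bound only $\BO(\sqrt{\theta})$, and you propose to recover $\BO(\theta)$ through an optimal coupling justified by "matched means and variances to leading order."

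That justification is where the gap lies: the variances are \emph{not} matched in the regime of this theorem. Conditional on a collision at time $\tau$ and on the collision point, the kinetic displacement over the remaining time $\theta=\dt-\tau$ is $v'\theta$ with variance $\sigma_p^2\theta^2$, whereas the diffusive step has variance $2D\theta=2\sigma_p^2\theta/R$. These agree only when $\theta\sim 1/R$; in the limit $\dt\rightarrow 0$ with $\varepsilon$ fixed one has $\theta\ll 1/R=\BO(\varepsilon^2)$, so $2D\theta\gg\sigma_p^2\theta^2$, and the $W_1$ distance between the two conditional laws started from a common point is governed by the standard-deviation mismatch, $\approx\sqrt{2D\theta}=\BO(\sqrt{\theta})$, not $\BO(\theta)$. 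Your route as written therefore yields only $\BO(\dt^{3/2}/\varepsilon^2)$ locally (hence $\BO(\sqrt{\dt}/\varepsilon^2)$ globally), weaker than the claimed $\BO(\dt^2/\varepsilon^2)$; the diffusion coefficient $D=\sigma_p^2/R$ is calibrated to the many-collision (long-time) variance, not to a single post-collision free flight. To push the conditional cost down to $\BO(\theta)$ one needs a different mechanism than path-level moment matching — e.g., using that the mismatched fluctuation is (nearly) mean-zero and acts on the spread-out distribution $\fp{k}$, so its effect on $W_1$ is second order in the noise amplitude, i.e.\ of the order of the variance difference $\BO(D\theta)=\BO(\theta)$ (which requires smoothness of $\fp{k}$ and is more delicate than the per-particle bound the paper invokes). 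As it stands, your step (iii) rests on a false premise and does not deliver the stated $\BO(\dt^2/\varepsilon^2)$ bound.
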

\begin{proof}
We use the inequality \eqref{eqn: WS inequality} from the first definition of the $W_1$ distance, which involves the position of particles. 
Consider two particles that start from the same initial position but evolve under the kinetic and the KDMC simulations, respectively.  
Let  $x^k$  denote the terminal position of the particles in the kinetic simulation and $x^{kd}$ denote that in the KDMC simulation. 
Within one time step of size $\dt$, two situations are possible: 
\begin{itemize}
    \item \textbf{Neither particles collide in the time step.} In this case, both simulations have the same initial states and the same flight time $\dt$, so the final positions are the same. Thus, no difference between them, so
$$W_1\!\left(\St(\fp{k}), \Zt(\fp{k})\right)=0.$$
The probability of a single particle not colliding in $\dt$ is $e^{-R\dt}=1-R\dt+\BO(\dt^2)$ when $\dt\rightarrow0$.
Then the probability for this case is $e^{-R\Delta t}e^{-R\Delta t}=1-\BO(R\dt)=1-\BO(\dt/\varepsilon^2)$. 

    \item \textbf{At least one particle collides in the time step.} In this case, the distance $|x^k - x^{kd}|$ is bounded by $\BO(\dt)$ since the flight time of both particles is less than or equal to $\dt$. 
    The boundedness holds for all particles following the initial distribution $\fp{k}$. Thus, applying the kinetic and the KDMC simulation on $\fp{k}$, we have
\begin{equation*}
W_1\!\left(\St(\fp{k}), \Zt(\fp{k})\right) \leq \BO(\dt).
\end{equation*}
The probability for this case is $1-e^{-R\Delta t}e^{-R\Delta t} = \BO(\dt/\varepsilon^2)$, 
\end{itemize}
The weighted sum of these two cases gives the local error  
\begin{equation*}
W_1\!\left(\St(\fp{k}), \Zt(\fp{k})\right) \leq \BO(\frac{\dt^2}{\varepsilon^2}). \qedhere
\end{equation*}
\end{proof}

\subsubsection{KDMC simulation error}\label{sec: sub kdmc error}
Before concluding the analysis of the KDMC simulation error, we briefly comment on the statistical error, which was omitted in the proof of the global error in Section~\ref{sec: KDMC global error term}.
When simulating in practice, a finite number of samples is used, leading to the statistical error of $\sigma/\sqrt{N}$ \cite{lapeyreIntroductionMonteCarlo2003a}, 
where $\sigma$ is the standard deviation of the underlying distribution, and $N$ is the number of samples.
This section (Section \ref{sec: KDMC analysis}), which concerns the simulation part,  mainly focuses on the deterministic errors (also called the bias), e.g., \eqref{eqn: local error e} and \eqref{eqn: local error dt}, and the statistical error is assumed to be low in both the KDMC simulation and the reference kinetic simulation, since we can always use more particles to reduce this error. For simplicity, this error is denoted as $\eta_s=\BO(1/\sqrt{N})$. 
We refer to \cite{ghoosAccuracyConvergenceCoupled2016, baetenAnalyticalStudyStatistical2018a}  for the detailed discussion of the statistical error in MC simulation. 

Next, the following theorem presents the main error bound for the KDMC simulation.

\begin{theorem}
Let $\fe{k+1}$ be the reference distribution obtained from the kinetic simulation at time $t^{(k+1)}=(k+1)\Delta t$ and its approximation obtained from KDMC be $\fp{k+1}$. 
The error of the KDMC simulation compared with the kinetic simulation, 
which is the $W_1$ distance between $\fe{k+1}$ and $\fp{k+1}$, written as $\epsilon_s$, reads
\begin{equation}\label{eqn: kdmc error}
 \epsilon_{s} = W_1(\fe{k+1}, \fp{k+1}) \leq \eta_s +
\begin{cases} 
      \BO\left(\frac{\varepsilon^2}{\dt}\right),  & \text{if } \dt \gg \varepsilon^2, \\
      \BO\left(\frac{\dt}{\varepsilon^2}\right),  & \text{if } \dt \ll \varepsilon^2,
\end{cases}
\end{equation}
where $\eta_s=\BO(1/\sqrt{N})$ is the statistical error.
\end{theorem}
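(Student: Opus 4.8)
The plan is to turn the one-step inequality \eqref{eqn: total simulation formula} into a global bound by summing the local errors over all time steps, using the fact that the reference kinetic propagator $\St$ is non-expansive in $W_1$ (Theorem~\ref{thm: ws global error bound}), and then to reinstate the statistical error that was suppressed there as a single additive term.

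First I would unroll the recursion \eqref{eqn: total simulation formula}. Both simulations start from the same initial distribution $S(x,v)$, so $\fe{0}=\fp{0}$ and $W_1(\fe{0},\fp{0})=0$; a one-line induction on the step index then gives
\begin{equation*}
W_1\!\left(\fe{k+1},\fp{k+1}\right)\ \leq\ \sum_{j=0}^{k} W_1\!\left(\St(\fp{j}),\Zt(\fp{j})\right).
\end{equation*}
The structural fact that makes this work is that \eqref{eqn: total simulation formula} propagates the previous global error with a coefficient exactly $1$ (the contraction estimate of Theorem~\ref{thm: ws global error bound}), so the local errors only add up rather than amplify geometrically.

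Next I would substitute the per-step local-error bounds and count the summands. At a fixed terminal time $\bar{t}=(k+1)\dt$ with $\bar{t}=\BO(1)$, the sum has $k+1=\bar{t}/\dt=\BO(1/\dt)$ terms. In the high-collision regime $\dt\gg\varepsilon^2$ each term is $\BO(\varepsilon^2)$ by \eqref{eqn: local error e}, so the sum is $\BO(1/\dt)\cdot\BO(\varepsilon^2)=\BO(\varepsilon^2/\dt)$; in the low-collision regime $\dt\ll\varepsilon^2$ each term is $\BO(\dt^2/\varepsilon^2)$ by \eqref{eqn: local error dt}, so the sum is $\BO(1/\dt)\cdot\BO(\dt^2/\varepsilon^2)=\BO(\dt/\varepsilon^2)$. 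These are precisely the two branches of the deterministic part of \eqref{eqn: kdmc error}, and both reduce to $\BO(1)$ when $\dt\sim\varepsilon^2$, which recovers the announced "large error in the intermediate regime" behaviour. Finally I would add back the statistical error: with finitely many particles both $\fe{k+1}$ and $\fp{k+1}$ deviate from their deterministic ($N\to\infty$) limits by $\BO(1/\sqrt{N})$, so by the triangle inequality for $W_1$ their distance picks up an extra $\eta_s=\BO(1/\sqrt{N})$, which yields \eqref{eqn: kdmc error}.

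The \emph{main obstacle} will be the bookkeeping of the statistical error: a naive per-step accounting would insert an $\eta_s$ at each of the $\bar{t}/\dt$ steps and blow up as $\dt\to0$. The argument must instead rely on the statistical error being $\BO(1/\sqrt{N})$ for the terminal empirical measure itself — it is controlled by the fixed ensemble of $N$ trajectories and does not compound over time steps — so it enters as a single additive term, consistent with the lumped treatment introduced in Section~\ref{sec: sub kdmc error}. A secondary, milder subtlety is that \eqref{eqn: local error e} and \eqref{eqn: local error dt} were each derived as a one-sided limit ($\varepsilon\to0$, respectively $\dt\to0$); one must apply each only in the regime where it is the sharper estimate, which is exactly the case split $\dt\gg\varepsilon^2$ versus $\dt\ll\varepsilon^2$ in the statement, and confirm there that the discretization and model errors suppressed along the way are genuinely of lower order.
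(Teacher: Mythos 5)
Your proposal is correct and follows essentially the same route as the paper: the paper likewise unrolls the recursion \eqref{eqn: total simulation formula} (relying on the non-expansiveness of $\St$ from Theorem~\ref{thm: ws global error bound}), sums the $\bar{t}/\dt$ local-error terms \eqref{eqn: local error e} and \eqref{eqn: local error dt} in the respective regimes to obtain $\BO(\varepsilon^2/\dt)$ and $\BO(\dt/\varepsilon^2)$, and appends the statistical error $\eta_s$ as a single additive term (justified there by the linearity of \eqref{eqn: BBGK}, matching your observation that it does not compound over time steps).
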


\begin{proof}   
Recursively substituting the local error \eqref{eqn: local error e} into \eqref{eqn: total simulation formula}, the $W_1$ distance between the two simulations at time $t^{(n+1)}$ is 
\begin{align}\label{eqn: global error e}
\begin{split}
    W_1(\fe{k+1}, \fp{k+1})  & \leq
    W_1\left(\fe{k}, \fp{k}\right) + \BO(\varepsilon^2) \leq \BO(\frac{\varepsilon^2}{\dt}t^{(k+1)}) = \BO(\frac{\varepsilon^2}{\dt}),
\end{split}
\end{align}
as $\varepsilon\rightarrow 0$, given that the initial error $W_1(\fe{0}, \fp{0})$ is zero. 
Therefore, by fixing the time step $\dt$, the global error \eqref{eqn: global error e} of the KDMC simulation is $\BO(\varepsilon^2)$.
Similarly, substituting the local error \eqref{eqn: local error dt} into \eqref{eqn: total simulation formula}, as $\dt\rightarrow0$,
\begin{align}\label{eqn: global error dt}
\begin{split}
    W_1(\fe{k+1}, \fp{k+1})  \leq \BO(\frac{\dt}{\varepsilon^2}), 
\end{split}
\end{align}
which means the global error \eqref{eqn: global error dt} is $\BO(\dt)$. 
Finally, the combination of \eqref{eqn: global error e} and \eqref{eqn: global error dt} leads to \eqref{eqn: kdmc error}, where the two limits $\epsilon\rightarrow0$ and $\dt\rightarrow0$ are replaced by $\dt \gg \varepsilon^2$ and $\dt \ll \varepsilon^2$, respectively, under the assumption that $\dt$ and $\varepsilon$ are both small, for emphasizing the relation between $\varepsilon$ and $\dt$.
The statistical error $\eta$ in \eqref{eqn: kdmc error} is due to the finite number of samples in the simulation, and its linear contribution to the total error is justified by the linearity of \eqref{eqn: BBGK}.
\end{proof}

The error \eqref{eqn: kdmc error} is between the distributions simulated from the kinetic simulation and the KDMC simulation at a time $t=(k+1)\dt$, which corresponds to the time-terminal simulation in \ref{appendix: steady-state solution}. If the time-integrated simulation is applied, which accumulates the distribution at all times $t_l = l\dt$ with $l=1,2,\ldots,k$, a smaller error is typically observed. This fact can be seen as follows.

The normalized time-integrated distributions of $\fe{k}$ and $\fp{k}$ at the time $t^{(k)}$ are
\begin{equation}
    \frac{\sum_{l=1}^k \fe{l}}{\int_\Omega \sum_{l=1}^k \fe{l} dx} = \frac{\sum_{l=1}^k \fe{l}}{k \int_\Omega \fe{0} dx}, \quad \text{and} \quad \frac{\sum_{l=1}^k \fp{l}}{\int_\Omega \sum_{l=1}^k \fe{l} dx} = \frac{\sum_{l=1}^k \fp{l}}{k \int_\Omega \fp{0} dx},
\end{equation}
where $\Omega$ is the spatial domain of the distributions, and the equalities are due to the mass-conservation as explained in Section \ref{sec: WS}. We also have the equality of the initial mass
$\int_\Omega \fe{0} dx = \int_\Omega \fp{0} dx$ since the initial condition is the same. 
Then, the $W_1$ distance between the two time-integrated simulations is  
\begin{equation}
    W_1\!\left(\frac{1}{k}\sum_{l=1}^k\fe{l}, \frac{1}{k}\sum_{l=1}^k\fp{l}\right)\leq \frac{1}{k}\sum_{l=1}^k   W_1\!\left(\fe{l}, \fp{l}\right) \leq  W_1\!\left(\fe{k}, \fp{k}\right),
\end{equation}
where the first inequality is due to the convexity of the $W_1$ metric \cite{panaretosStatisticalAspectsWasserstein2019}, and the last quantity is $\epsilon_s$ in \eqref{eqn: kdmc error}. The normalization factors $\int_\Omega \fe{0} dx $ and $ \int_\Omega \fp{0} dx$ are not written explicitly as stated in Remark \ref{rek: ws computing}.


\section{Estimation error}\label{sec: estimation sec}
KDMC is a hybrid approach that consists of kinetic and diffusive simulations.
In this work, the standard MC estimator \eqref{eqn: K estimator} is used to score, i.e., to compute, the moments of the kinetic part, and the fluid estimation, derived in Section \ref{sec: fluid estimaion}, is used to score the moments of the diffusive part. 
The overall estimation error is the combination of errors in these two estimation parts. 
To analyze the estimation error, we decompose it into the kinetic and diffusive parts in Section \ref{sec: ana estimation error}, which are then analyzed separately in Sections~\ref{sec: error of kinetic part} and~\ref{sec: num. error of fluid model}, respectively.

The reference solution is computed by the standard MC method, which simulates particle trajectories as in Section \ref{sec: kinetic simulation} and estimates the moment using the standard MC estimator \eqref{eqn: K estimator}. 
For brevity, we refer to this method as the kinetic method. The KDMC simulation introduced in Section \ref{sec: KDMC} together with the associated fluid estimation described in Section \ref{sec: fluid estimaion} is referred to as the KDMC method or simply KDMC.
The moments \eqref{eqn: moments} are smooth functions, so we choose the relative $L_2$ norm to measure errors.

\subsection{Estimation error decomposition}\label{sec: ana estimation error}
We denote the reference moment obtained from the kinetic method by $m^{k}$, and the approximate moment obtained from KDMC by $m^{kd}$,
which includes the kinetic component $m^{kd,k}$  and the diffusive component $m^{kd,d}$, that is, $m^{kd} =  m^{kd, k} +  m^{kd, d}$.
Let $\alpha\in[0, 1]$ denote the expected fraction of the duration of the kinetic step within a time step of size $\dt$. In a homogeneous background, where the collision rate \( R(x) = R \) is constant, 
the coefficient $\alpha$ remains constant within each time step \( \Delta t \) and thus throughout the entire simulation. 
Under the $L_2$ norm, the relative estimation error is
\begin{align}\label{eqn: e_kd}
    \epsilon_{kd} =  \left\Vert m^{k} - m^{kd}\right\Vert_2/{\left\Vert m^{k}\right\Vert_2}  =  \left\Vert \alpha m^{k}-m^{kd,k} + (1-\alpha)m^{k}-m^{kd,d}\right\Vert_2/{\left\Vert m^{k}\right\Vert_2} \leq \alpha\epsilon_k + (1-\alpha)\epsilon_{kd},
\end{align}
where $m^{kd} =  m^{kd, k} +  m^{kd, d}$ is substituted in, and
\begin{equation}\label{eqn: e_k e_d def}
    \epsilon_{k} = \left\Vert \alpha m^{k} - m^{kd,k}\right\Vert_2/\left(\alpha\left\Vert m^{k}\right\Vert_2\right), \quad \text{and} \quad \epsilon_d = \left\Vert (1-\alpha)m^{k} - m^{kd,d}\right\Vert_2/\left((1-\alpha)\left\Vert m^{k}\right\Vert_2\right), 
\end{equation}
are defined as the relative kinetic and diffusive estimation errors, and discussed in Sections \ref{sec: error of kinetic part} and \ref{sec: num. error of fluid model}, respectively.  
Intuitively, the moment from the kinetic part $m^{kd,k}$ contributes an $\alpha$ portion of the approximate moment $m^{kd}$ 
and its error should therefore be measured relative to the corresponding portion of the reference moment, $\alpha m^{k}$. 
The same interpretation holds for the diffusive part and its corresponding relative error $\epsilon_d$.

To derive the coefficient $\alpha$, let $\tau\sim \text{Exp}(R)$ be the kinetic flight time of a particle in a time step $\dt$. The particle flies for time $\tau$ if $\tau<\dt$; otherwise, it flies for $\dt$. So the expectation is the weighted sum of these two events, i.e.,
\begin{equation*}
    \EX[\tau] = \int_0^{\dt} \tau R e^{-R\tau}d\tau + \dt e^{-R\dt} =\left(1-e^{-R\dt}\right)/{R} ,
\end{equation*}
where $e^{-R\dt}$ is the probability that the particle does not collide in $\dt$. Then the coefficient
\begin{equation}\label{eqn: alpha}
    \alpha = {\EX[\tau]}/{\dt} = \left({1-e^{-R\dt}}\right)/{R\dt}.
\end{equation}
Since $R\dt\in(0,\infty)$, we have limits 
\begin{equation*}
     \lim_{R\dt\rightarrow0} \alpha = 1, \quad \text{and} \quad \lim_{R\dt\rightarrow\infty} \alpha =  0.
\end{equation*}
From $R\dt = 0$ to $\infty$, the coefficient $\alpha$ monotonically decreases from $1$ to $0$. In practice, we assume  $R=\BO(1/\varepsilon^2)\gg1$ and $\dt\ll1$.
Then, we can conclude from \eqref{eqn: e_kd} that 
\begin{itemize}
    \item If $\dt\gg\varepsilon^2$, that is $\varepsilon\rightarrow0$, then $\alpha\rightarrow0$ and the diffusive part $m^{kd,d}$ dominates the estimated moment $m^{kd}$.
    \item If $\dt\ll \varepsilon^2$, that is $\dt\rightarrow0$,  then $\alpha\rightarrow1$ and the kinetic part $m^{kd,k}$ dominates the estimated moment $m^{kd}$.
\end{itemize} 
Hence, the estimation error \eqref{eqn: e_kd} can be approximated as
\begin{equation}\label{eqn: case e_kd}
    \epsilon_{kd} \approx 
    \begin{cases} 
      \epsilon_d,  & \text{if } \dt \gg \varepsilon^2, \\
      \epsilon_k, & \text{if } \dt \ll \varepsilon^2.
\end{cases}
\end{equation}
This formula \eqref{eqn: case e_kd} is based on the homogeneous background assumption, that is, we assume $R(x)=R$ is constant. 
In fact, for a moderate heterogeneous background, this formula still holds.
By moderate, we refer to cases where $R(x)=r(x)/\varepsilon^2$, and the range of the function $r(x)$ is bounded such that $R(x)=\BO(1/\varepsilon^2)$, which is also the assumption of the fluid limit in Section \ref{sec: fluid model}. 
Under the moderate assumption, the diffusive step remains dominant in each time step of size $\dt$, and thus $\epsilon_{kd}=\epsilon_d$ if $\dt\gg \varepsilon^2$; the reverse holds when  $\dt\ll \varepsilon^2$.
In Section \ref{sec: fusion test case}, we numerically show that \eqref{eqn: case e_kd} holds for the moderate heterogeneous background with a fusion-relevant test case.
Next, we discuss $\epsilon_k$ and $\epsilon_d$ separately, and subsequently combine them to derive the expression in \eqref{eqn: case e_kd}.
\subsection{Error of kinetic part $\epsilon_k$}\label{sec: error of kinetic part}
We discuss the kinetic error in two limiting cases: $\dt\rightarrow 0$ and $\varepsilon\rightarrow0$. 
In the kinetic part, the moments $m^{kd,k}$ defined in Section \ref{sec: ana estimation error} are scored by the unbiased MC estimator \eqref{eqn: K estimator}. Thus, the error is purely statistical.

\subsubsection{Error of kinetic part as $\dt\rightarrow 0$}
As explained in Section \ref{sec: sub kdmc error}, the statistical error is $\sigma/\sqrt{N}$ with the standard deviation $\sigma$ and the number of samples $N$.
In each time step $\dt$, one and only one kinetic flight happens.
If the total number of particles $I$ is fixed, more segments result in more samples for the estimator. Thus, the error is then lower.
In particular, the number of time steps is $K=\Bar{t}/\dt$ where $\Bar{t}$ is the simulation time, and the total number of samples $N$ used in the MC estimator is $N=KI$. Thus, the error scales as $\sigma/\sqrt{KI}=\sigma\sqrt{\dt}/\sqrt{I}=\BO(\sqrt{\dt})$ if $\sigma$ and $I$ are fixed. 

However, this error cannot vanish entirely. For instance, if a collision-based estimator is used, where only collisions contribute to the estimate. 
The probability that a particle does not collide in a time step $\dt$ is $e^{-R\dt}$ with $R=\BO(1/\varepsilon^2)$. Thus, the particle is not likely to collide in the time step, when $\dt\ll\varepsilon^2$.
Further reducing $\dt$ does not reduce this statistical error.
If a track-length estimator is used, which scores the flight distance instead of the collision, successive samples become highly correlated when $\dt\ll\varepsilon^2$. The effective number of independent samples no longer scales with the number of time steps. 
Therefore, the statistical error stagnates. 
So, if denoting the stagnated error as $\eta_k$, we have 
\begin{equation}\label{eqn: k error dt}
 \epsilon_{k} = \eta_k, \quad \text{if } \dt \ll \varepsilon^2,
\end{equation}
where $\eta_k=\BO(1/\sqrt{I})$ since $\dt$ has no effect on the statistical error if $\dt \ll \varepsilon^2$, as explained above. The error of the kinetic part dominates only if $\dt \ll \varepsilon^2$ as shown in \eqref{eqn: case e_kd}, and thus the case $\epsilon_k=\BO(\sqrt{\dt})$ if $\dt \gg \varepsilon^2$ is not important.
 
\subsubsection{Error of kinetic part as $\varepsilon\rightarrow0$}\label{sec: kinetic error e}
We assume the time step $\dt$ and the number of particles $I$ are fixed, and start from the collision rate being small.
When $\varepsilon\rightarrow\infty$, the collision rate $R=\BO\left(1/\varepsilon^2\right)\rightarrow 0$, and no collisions happen. 
As a result, in both the kinetic and the KDMC simulations, 
particles fly with the initial velocity over the whole simulation time $\Bar{t}$, 
which causes no difference between the moments estimated by the two methods.

As $\varepsilon$ decreases, the number of collisions $R\Bar{t}=\BO(1/\varepsilon^2)$ increases, with $\Bar{t}$ the simulation time. 
These collisions lead to different trajectories for the two simulations, and this difference introduces statistical fluctuations, which contribute to the variance of the MC estimator.
The variance $\sigma^2$ is then proportional to the number of collisions, and thus to $\BO(1/\varepsilon^2)$. 
The statistical error $\sigma/\sqrt{N}$ is then  $\BO(1/(\varepsilon\sqrt{N}))=\BO(1/\varepsilon)$ as $N=KI$ is fixed.
When $\dt\gg\varepsilon^2$, particles are likely to collide in each $\dt$. 
The variance, and therefore the statistical error, stagnates. 
The error of the kinetic part dominates only when $\dt\ll\varepsilon^2$ as illustrated in \eqref{eqn: case e_kd}, and thus it is not need to discuss the error of the kinetic part when  $\dt\ll\varepsilon^2$.
Finally, the error of the kinetic part is small when $\varepsilon$ is large, then it increases as $\BO(1/\varepsilon)$ when $\dt\ll\varepsilon^2$. Thus, we have
\begin{equation}\label{eqn: k error e}
 \epsilon_{k} = \BO\left(\frac{1}{\varepsilon}\right),  \quad \text{ if } \dt \ll \varepsilon^2.
\end{equation}
We remark that the $\dt$ affects the statistical error by changing the number of samples, and the diffusive scaling parameter $\varepsilon$ affects the statistical error by changing the variance $\sigma^2$.
In both limit cases, the statistical error starts to stagnate around $\dt=\varepsilon^2$. Thus, the two stagnation errors are expected to be of comparable magnitude.  

In fact, the variance of the kinetic part is also affected by the mean plasma velocity $u_p$, especially for the $m_1$ \eqref{eqn: moments}. A detailed discussion of $u_p$ is beyond the scope of this work, but we mention it briefly here.
The MC estimator \eqref{eqn: K estimator}, like the track-length and collision estimator \cite{mortierAdvancedMonteCarlo2020}, involves the sampling of $v^l$ with the particle velocity $v$ following the normal distribution $ N(u_p, \sigma_p^2)$  and $l = 0,1,2$. 
When $l$ is odd, the value $v^l$ fluctuates around zero, especially when the mean $u_p$ is small.
In this case, the summation of negative and positive samples in the estimator \eqref{eqn: K estimator} may cancel each other out, causing a large variance.
However, this is not the case if $l$ is even, since $v^l$ would always be positive. 
A rigorous proof can be done by writing down the expression of the variance for the sample $v^l$. 
We leave the proof for future work and conclude that 
if the MC estimation \eqref{eqn: K estimator} is used, the $m_1$ moment has a greater variance than the $m_0$ and $m_2$ moments when $u_p$ is small. This fact is shown numerically in Section \ref{sec: est kinetic}.

\subsection{Error of diffusive part $\epsilon_d$}\label{sec: num. error of fluid model}
The diffusive part estimation consists of multiple error components. 
We first analyze each error and subsequently explain how these errors combine to form the overall error of the diffusive part.
\subsubsection{Propagated error from KDMC simulation $\epsilon_{i}$}
The fluid estimation scores moments by solving the fluid model \eqref{eqn: fluid-model} with the initial condition $\hat\rho_0(x)$ provided by KDMC as shown in \eqref{eqn: initial fluid estimator}. 
However, this initial condition is not the exact initial particle distribution since the KDMC simulation introduces an error $\epsilon_{s}$ as shown in \eqref{eqn: kdmc error}. 
This initial error propagates through the fluid model solution. Below, we show that the resulting propagated error, denoted as $\epsilon_i$, has the same asymptotic bound as $\epsilon_s$. 

Let the true initial distribution be $\rho_0(x)$ and the corresponding true solution of the fluid model be $\rho(x,t)$. 
Similarly, let the solution given by the inexact initial condition $\hat\rho_0(x)$ be $\hat\rho(x,t)$. 
Rewrite the fluid model \eqref{eqn: fluid-model} in the operator form 
\begin{equation}
    \partial_t\rho(x,t)=\mathcal{L}\rho(x,t),
\end{equation}
where $\mathcal{L}$ is the linear spatial operator in \eqref{eqn: fluid-model}. 
Since this model is linear, its solution can be expressed as 
\begin{equation}
    \rho(x,t) = e^{-\mathcal{L}t}\rho(x,t=0).
\end{equation}
The propagated error in the solution at time $t$ is then given by
\begin{align}\label{eqn: e_i bound}
    \epsilon_i &= \lVert\rho(x,t)-\hat\rho(x,t)\rVert_2 
    =\lVert e^{-\mathcal{L}t}\left(\rho_0(x) - \hat\rho(x)\right)\rVert_2  
    \leq e^{-ct}\lVert \rho_0(x) - \hat\rho_0(x)\rVert_2,
\end{align}
where $c$ is a constant. The term $\lVert \rho_0(x) - \hat\rho_0(x)\rVert_2$ is the difference of the initial conditions under the $L_2$ norm, 
which is the KDMC error $\varepsilon_s$ described in \eqref{eqn: kdmc error}. 
Since $e^{-ct}$ in \eqref{eqn: e_i bound} is independent of $\varepsilon$ and $\dt$, the propagated error $\epsilon_i$ has the same convergence as $\epsilon_s$ with respect to $\varepsilon$ and $\dt$.
Therefore, 
\begin{equation}\label{eqn: initial error}
 \epsilon_{i} = \eta_s +
\begin{cases} 
      \BO\left(\frac{\varepsilon^2}{\dt}\right),  & \text{if } \dt \gg \varepsilon^2, \\
      \BO\left(\frac{\dt}{\varepsilon^2}\right),  & \text{if } \dt \ll \varepsilon^2,
\end{cases}
\end{equation} 
where $\eta_s$ is a statistical error that we assume to be negligible.        
\begin{remark}[Errors with different norms]\label{rek: mixed norms}
The WS distance and the $L_2$ norm measure fundamentally different error aspects. The former measures the "effort" to transport one distribution to another, and has units of distance. The latter measures the energy (squared difference) between vectors, and has units of the square of the quantity being measured. Thus, it is nontrivial to mix them when computing precise quantities. 
Nevertheless, we observe numerically that the simulation error $\epsilon_s$ in the $L_2$ norm exhibits the same convergence rate as in the $W_1$ distance.
\end{remark}

\subsubsection{Model error in the moment calculation $\epsilon_m$}
After solving the fluid model \eqref{eqn: fluid-model}, the moments of the diffusive part are calculated as \eqref{eqn: m0}-\eqref{eqn: m2}.
We now show that the error in these moment calculations is bounded by $\mathcal{O}(\varepsilon^2)$.
According to the derivation of the fluid model in Section \ref{sec: fluid model}, we know 
\begin{equation} \label{eqn: fluid error f}
    \lVert f(x,v,t)- f_f(x,v,t)\rVert_2 = \lVert\varepsilon^2 f_2(x,v,t) + \varepsilon^3 f_3(x,v,t) + \cdots\rVert_2 = \BO(\varepsilon^2).
\end{equation}
Here we use the fact that $f_2(x,v,t)$ is bounded by a constant independent of $\varepsilon$ \cite{maesHilbertExpansionBased2023a}. A rigorous proof of \eqref{eqn: fluid error f} can be found in \cite{othmerDiffusionLimitTransport2000b}. 
Further, we bound the error of moments \eqref{eqn: moments} as
\begin{equation}
    \epsilon_m = \left\lVert\intv v^l f(x,v,t) -  v^l f_f(x,v,t)dv\right\lVert_2 \leq \intv \lVert v^l\rVert_2 \left\lVert f(x,v,t)-f_f(x,v,t)\right\lVert_2dv \leq \BO(\varepsilon^2) \intv \lVert v^l\lVert_2 dv = \BO(\varepsilon^2),
\end{equation}
with $l=0, 1, 2$ corresponding to, respectively, $m_0(x), m_1(x)$ and $m_2(x)$ in \eqref{eqn: m0}-\eqref{eqn: m2}. 

\subsubsection{Time evolution error $\epsilon_{\dt}$}
In the fluid estimation procedure, we use the average duration of all diffusive steps for the evolution time of the fluid model \eqref{eqn: fluid-model}, as explained in Section \ref{sec: fluid estimaion}.    
Since the time of each diffusive step is less than $\dt$, the error for the averaging is bounded by $\BO(\dt)$. 
In addition, the averaging \eqref{eqn: fluid estimation time} is empirical, leading to an additional statistical error.    
Thus, the simulation time of the fluid model \eqref{eqn: fluid-model} has the error
    \begin{equation}\label{eqn: biased time}
        \delta_{\dt}=\BO(\dt)+\eta_{\dt},
    \end{equation}
where $\eta_{\dt}=\BO(1/\sqrt{N})$ is the statistical error, which is assumed to be negligible, due to the limited number of diffusive steps, as discussed in Section~\ref{sec: sub kdmc error}.
    
The biased simulation time \eqref{eqn: biased time} affects the solution of the fluid model as follows.
Let $\rho(x,t)$ be the true solution of fluid model \eqref{eqn: fluid-model} at time $t$, and $\rho(x,t+\delta_{\dt})$ is the solution simulated with the biased simulation time. Given the first-order Taylor expansion of $\rho(x, t+\delta_{\dt})$
\begin{equation}
    \rho(x, t+\delta_{\dt}) = \rho(x,t)+\delta_{\dt}\partial_t\rho(x,t) + \BO(\delta_{\dt}^2),
\end{equation}
the error due to the biased simulation time, which refers to the time evolution error, is then
\begin{equation}
    \epsilon_{\dt} = \lVert\rho(x, t+\delta_{\dt}) - \rho(x,t)\lVert_2 = \lVert\delta_{\dt}\partial_t\rho(x,t) + \BO(\delta_{\dt}^2)\lVert_2 = \lVert\delta_{\dt}\lVert_2 = \BO(\dt)+\eta_{\dt},
\end{equation}
where $\partial_t\rho(x,t)$ is independent of $\dt$ and $\eta_{\dt}$.   
For a homogeneous background, the averaging provides an unbiased estimate of the true evolution time since each diffusive time follows the same distribution $\text{Exp}(R)$ when $ R(x) = R$, that is, all samples are independent and identically distributed. 
    As a result, only a small statistical error is introduced. 

\subsubsection{Discretization errors $\epsilon_x$ and $\epsilon_{t}$}
Last, the numerical solution of the fluid model \eqref{eqn: fluid-model} introduces spatial and temporal discretization errors, denoted as $\epsilon_x$ and $\epsilon_t$, respectively. In this work, we use an upwind scheme for the advection term and a central difference scheme for the diffusive term, resulting in first-order accuracy in both space and time. 
Consequently, the spatial discretization error $\epsilon_x=\BO(\Delta x)$ with $\Delta x$ the spatial step size.
As the evolution time of this equation in the fluid estimation procedure is less than $\dt$, the time step used in the numerical solver will be much smaller than $\dt$. This ensures $\epsilon_t$ is $\BO(\dt)$. 
In practice, $\epsilon_t$ is negligible compared to $\epsilon_{\dt}=\eta_{\dt}+\BO(\dt)$, and we omit it from further discussion.

It is trivial to prove that the error of the fluid estimation is the sum of the above-listed errors, due to the linearity of the Boltzmann-BGK equation \eqref{eqn: BBGK} and the fluid model \eqref{eqn: fluid-model}. Therefore, we skip the proof and have
\begin{equation}\label{eqn: fluid est convergence}
    \epsilon_d  = \epsilon_m + \epsilon_{\dt}+\epsilon_x + \epsilon_i
    = \BO(\varepsilon^2) + \BO\left(\dt\right) + \BO(\Delta x) + \eta + \begin{cases} 
              \BO\left(\frac{\varepsilon^2}{\dt}\right),  & \text{if } \dt \gg \varepsilon^2, \\
              \BO\left(\frac{\dt}{\varepsilon^2}\right),  & \text{if } \dt \ll \varepsilon^2.
        \end{cases}
\end{equation}
where $\eta=\max(\eta_s,\eta_k,\eta_{\dt})=\BO(1/\sqrt{N})$ is the statistical error, assumed to be small.

We conclude our discussion of the simulation error by neglecting the discretization error and the influence of the number of particles $I$,
as our focus is on the performance of the KDMC method with respect to the time step $\dt$ and the diffusive scaling parameter $\varepsilon$.
Next, we substitute the error bound for the kinetic part \eqref{eqn: k error dt}, \eqref{eqn: k error e},
and that for diffusive part \eqref{eqn: fluid est convergence} into the error formula \eqref{eqn: case e_kd}, we obtain 
\begin{equation}\label{eqn: err kd dt fixed}
    \epsilon_{kd}(\varepsilon) =  
    \begin{cases} 
              \BO\left(\varepsilon^2\right),  & \text{if } \dt \gg \varepsilon^2, \\
              \BO\left(\frac{1}{\varepsilon}\right),  & \text{if } \dt \ll \varepsilon^2,
        \end{cases}
        \quad \text{ and } \quad
    \epsilon_{kd}(\dt) =    
    \begin{cases} 
              \BO\left(\frac{1}{\dt}\right),  & \text{if } \dt \gg \varepsilon^2, \\
              \BO\left(\dt\right),  & \text{if } \dt \ll \varepsilon^2,
        \end{cases}
\end{equation}
where \( \epsilon_{kd}(\varepsilon) \) denotes the estimation error when the time step \( \Delta t \) is fixed and the error depends only on the diffusive scaling parameter \( \varepsilon \), whereas \( \epsilon_{kd}(\Delta t) \) denotes the error when \( \varepsilon \) is fixed and the error depends only on \( \Delta t \).  
Note that, when $\dt \ll \varepsilon^2$, the kinetic part error $\eta$ in \eqref{eqn: k error dt} is expected to dominate $\epsilon_{kd}(\Delta t)$. However, since $\eta_k$ is assumed small, the diffusive part error, which is of order $\BO(\dt)$, remains significant. Consequently,  we have $\epsilon_{kd}(\Delta t)=\BO(\dt)$ if $\dt \ll \varepsilon^2$.


\section{Numerical experiments}\label{sec: Num Ex}
This section numerically validates the theoretical analysis presented in Sections \ref{sec: KDMC analysis} and \ref{sec: estimation sec}. The convergence of the KDMC simulation is examined in Section~\ref{sec: num kd}. 
For the estimation procedure, we introduce a test case in Section \ref{sec: test case}, with which the estimation errors of the kinetic and diffusive parts are then displayed separately in Sections~\ref{sec: est kinetic} and~\ref{sec: est diff}, respectively.
The total estimation error is presented in Section~\ref{sec: estimation error}.
All experiments in Section~\ref{sec: num kd}-~\ref{sec: est diff} make use of a homogeneous background. 
To illustrate the effectiveness of KDMC and our analysis in a more realistic context, a heterogeneous fusion-relevant test case is presented in Section \ref{sec: fusion test case}. 
Finally, the computational cost of KDMC is analyzed in Section \ref{sec: computational cost}.
Throughout this work, we specify the key parameters necessary to interpret our results and refer to our code repository \cite{tangPythonCodeThis2025} for further details.

\subsection{KDMC simulation error}\label{sec: num kd}
In Section \ref{sec: KDMC analysis}, we proved the local error bounds for the KDMC simulation with respect to the diffusive scaling parameter $\varepsilon$ \eqref{eqn: local error e} and the time step $\dt$ \eqref{eqn: local error dt}, 
along with the global error bound \eqref{eqn: kdmc error} under the $W_1$ distance metric. 

In numerical experiments, we use the kinetic simulation described in  Section~\ref{sec: kinetic simulation} as a reference solution, and compare the particle distributions from the kinetic and the KDMC simulation at a terminal time $\bar{t}$. 
All particles start from the initial position $x_0=0$ and the initial velocity $v_0$ following the Maxwellian distribution \eqref{eqn: maxwellian}. 
The collision rate $R$ and the variance $\sigma_p^2$ are both equal to $1/\varepsilon^2$ with the diffusive scaling parameter $\varepsilon\in[10^{-3}, 10^0]$, 
the mean velocity is $u_p=2$, and the time step is $\dt=\bar{t}/K$ with $K\in[10^0, 10^4]$.

The local error is the error made in a single time step. That is, in both the kinetic and KDMC simulations, particles evolve up to time $\dt$. 
The local error is obtained by calculating the $W_1$ distance \eqref{Def: WS-2} between the two resulting distributions at $\dt$.
This error is illustrated in Figure \ref{fig: local dt} with the number of particles $I=5\times10^7$. 
In Figure \ref{Fig: verify_local_e}, we fix $\dt=\bar{t}/5$ and plot the $W_1$ distance against $\varepsilon$. The result shows that the local error scales as $\BO(\varepsilon^2)$ as $\varepsilon\rightarrow 0$. 
When $\varepsilon=0.1$ is fixed and the time step varies, as demonstrated in Figure \ref{Fig: verify_local_t}, the error decreases as $\BO(\dt^2)$ as $\dt\rightarrow0$. These observations are consistent with the analytical order of the local errors \eqref{eqn: local error e} and \eqref{eqn: local error dt}, respectively.

To evaluate the global error, we let $10^7$ particles evolve up to the terminal time $\bar{t}=0.0275$ instead of performing just a single time step $\dt$.
The convergence is illustrated in Figure \ref{fig: global e}. 
For fixed $\dt$, Figures \ref{fig: dt=1} and \ref{fig: dt=25} show the global error decreases as $\BO(\varepsilon^2)$ when $\dt\gg\varepsilon^2$ and increases as $\BO(1/\varepsilon^2)$ when $\dt\ll\varepsilon^2$. The largest bias of the KDMC simulation is around $\dt=\varepsilon^2$ given the form of the error bound \eqref{eqn: kdmc error}. 
Similarly, for fixed $\varepsilon$, Figures~\ref{fig: global dt=0.01} and ~\ref{fig: global dt=0.006} demonstrate that the global error decreases as $\BO(\dt)$ when $\dt\ll\varepsilon^2$ and grows like $\BO(1/\dt)$ when $\dt\gg\varepsilon^2$. 
In some cases, such as around $\dt=10^{-5}$ in Figure~\ref{fig: global dt=0.01}, the observed decay is faster than $\BO(\dt)$, indicating that the bound in~\eqref{eqn: kdmc error} is not always sharp.  
Nevertheless, the simulation results generally align well with the theoretical prediction.
In Section \ref{sec: estimation sec}, we showed that there are other error terms bounded by 
$\BO(\varepsilon^2)$ and $\BO(\dt)$ in the bound of the total estimation error \eqref{eqn: err kd dt fixed}. 
Thus, there is no need to further sharpen the error bound.

\begin{figure}[h]
\centering
    \makebox[\textwidth]{
    \begin{subfigure}{0.45\textwidth}
    \centering    
    \includegraphics[trim=0 0 0 30, clip, width=\textwidth]{
    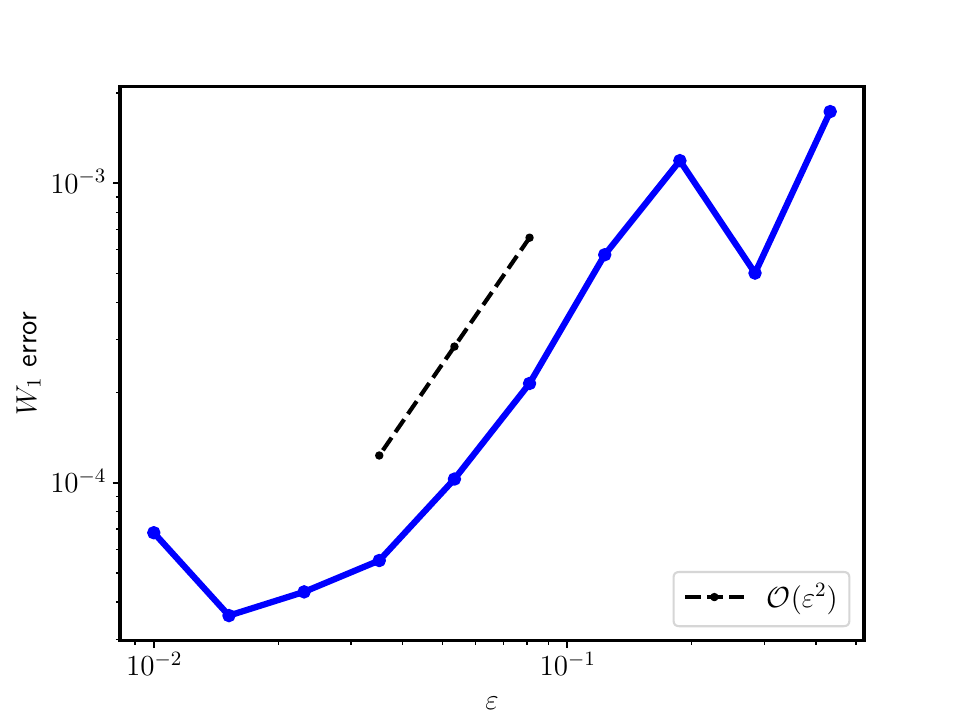}
    \caption{Fixed $\dt=\bar{t}/5$, varying $\varepsilon$}
    \label{Fig: verify_local_e}
    \end{subfigure}
    \begin{subfigure}{0.45\textwidth}
        \centering    
        \includegraphics[trim=0 0 0 30, clip, width=\textwidth]{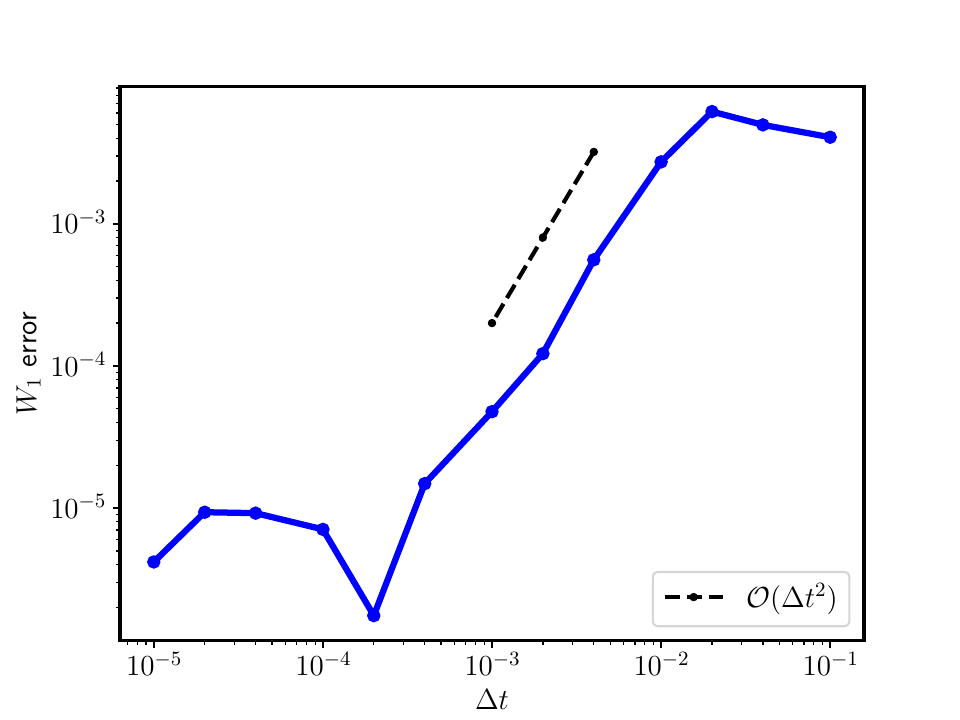}
        \caption{Fixed $\varepsilon=0.1$, varying $\dt$}
        \label{Fig: verify_local_t}
    \end{subfigure}
    }
    \caption{Local KDMC simulation error. Left: fixed $\dt$, the error decreases as $\BO(\varepsilon^2)$. Right: fixed $\varepsilon$, the error decreases as $\BO(\dt^2)$.}
    \label{fig: local dt}
\end{figure}

\begin{figure}[h]
\centering
    \makebox[\textwidth]{
     \begin{subfigure}{0.45\textwidth}
         \centering
         \includegraphics[trim=0 0 0 30, width=\textwidth]{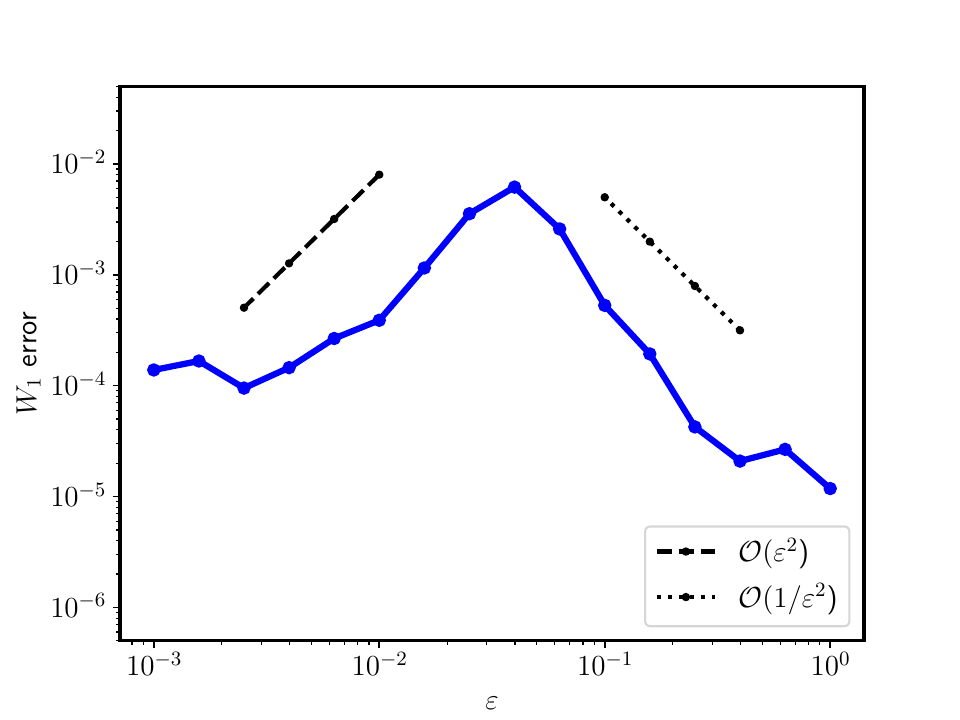}
         \caption{Fixed $\dt=\bar{t}/5$, varying $\varepsilon$}
         \label{fig: dt=1}
     \end{subfigure}
     \hspace{0.02\textwidth}
     \begin{subfigure}{0.45\textwidth}
         \centering
         \includegraphics[trim=0 0 0 30, width=\textwidth]{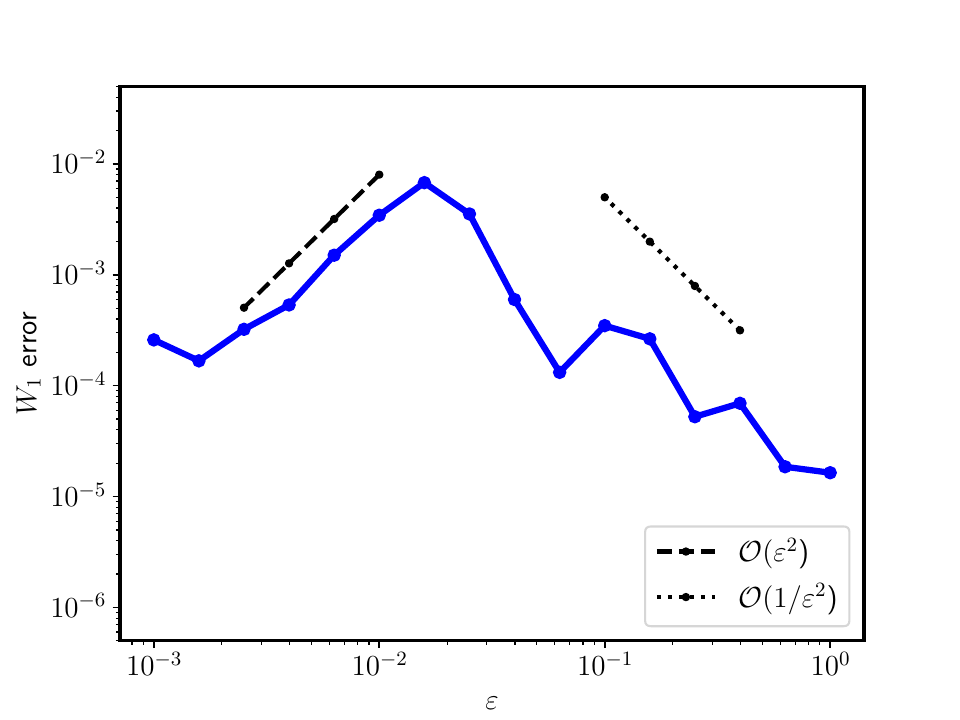}
         \caption{Fixed $\dt=\bar{t}/25$, varying $\varepsilon$}
         \label{fig: dt=25}
     \end{subfigure}
     }
     \hfill
     \centering
    \makebox[\textwidth]{
     \begin{subfigure}{0.45\textwidth}
         \centering
         \includegraphics[width=\textwidth]{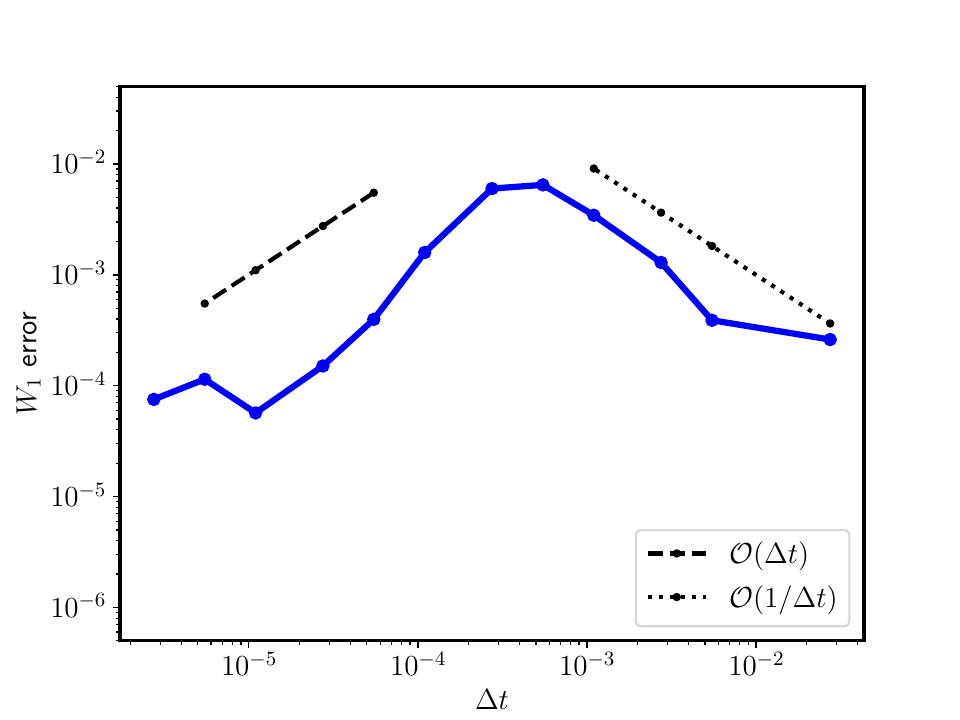}
         \caption{Fixed $\varepsilon=0.01$, varying $\dt$}
         \label{fig: global dt=0.01}
     \end{subfigure}
     \hspace{0.02\textwidth}
     \begin{subfigure}{0.45\textwidth}
         \centering
         \includegraphics[width=\textwidth]{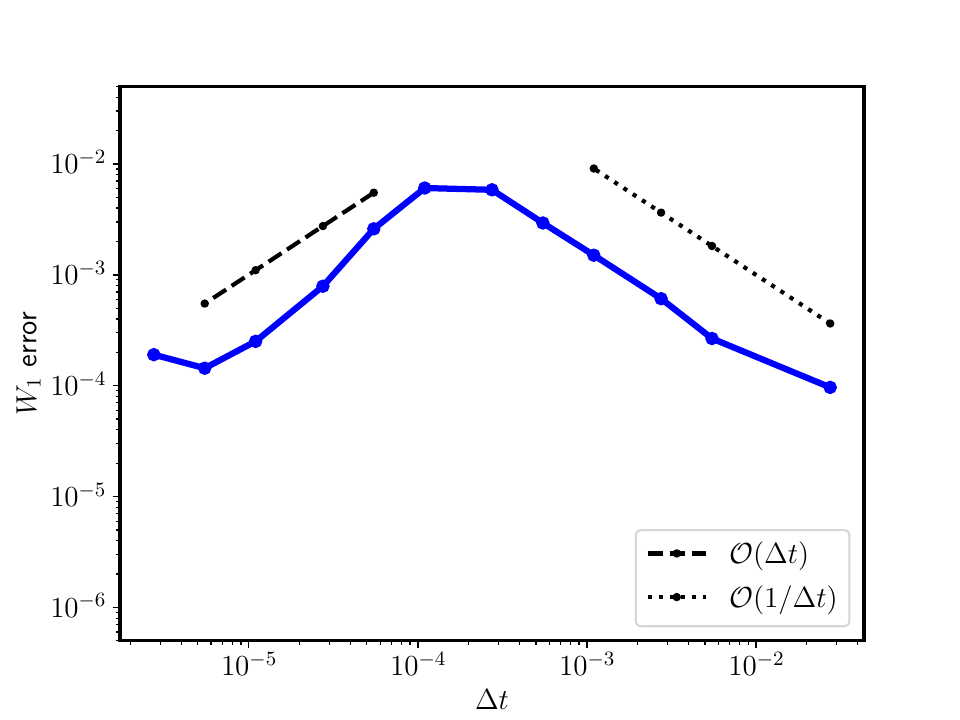}
         \caption{Fixed $\varepsilon\approx0.006$, varying $\dt$}
         \label{fig: global dt=0.006}
     \end{subfigure}
     }
        \caption{Global KDMC simulation error. First row: convergence with fixed $\dt$. Second row: convergence with fixed $\varepsilon$.}
        \label{fig: global e}
\end{figure}


\subsection{Homogeneous test case for estimation}\label{sec: test case}
In this test case, the particle system, defined over the periodic domain $x \in [0, 1]$, evolves over the time interval $t\in[0,\bar{t}]$ with $\bar{t} = 0.0275$.
The variance of the plasma velocity and the collision rate are
\begin{equation}
   \quad \sigma_p^2(x) = \frac{1}{\varepsilon^2}, \quad \text{and} \quad R(x) = \frac{1}{\varepsilon^2},
\end{equation}
respectively. The mean velocity is $u_p=30$ unless stated otherwise. The parameter $\varepsilon$ ranges from $0.01$ to $1$, implying a maximum collision rate of $10^4$. 
As the initial condition,
particles have position $x_0$ sampled from the density
\begin{equation}
    \rho(x, t=0) = 1 + \frac{1}{2\pi}\sin(2\pi x), \quad x\in[0, 1],
\end{equation}
and velocities following the Maxwellian distribution $M(v|x_0)$ given by \eqref{eqn: maxwellian}. A periodic boundary condition is imposed on the spatial domain.

In the numerical experiments, the three moments \eqref{eqn: moments} are discretized spatially as mentioned in Section \ref{sec: fluid estimaion}. 
If the reference moments are \( m_l=[m_{l,1}, \cdots, m_{l,J}]\), and the approximate moments are \( \hat{m}_l=[\hat{m}_{l,1}, \cdots, \hat{m}_{l,J}]\), with $l=0, 1, 2$, the relative \(L_2\)  error $E$ with respect to the reference moments is thus calculated as
\begin{equation}\label{eqn: rel l2 norm}
    E = \frac{\left(\sum_{j=1}^J \left(m_{l,j}-\hat{m}_{l,j}\right)^2\right)^{1/2}}{\left(\sum_{j=1}^J m_{l,j}^2\right)^{1/2}}.
\end{equation}

\subsection{Estimation error: kinetic part}\label{sec: est kinetic}
We now present the numerical results for the estimation errors under the relative $L_2$ metric \eqref{eqn: rel l2 norm}, starting with the error in the kinetic part as defined in \eqref{eqn: e_k e_d def}. 
To calculate the error \eqref{eqn: e_k e_d def}, we first need to isolate the kinetic part of the estimated moment $m^{kd,k}$ from the total moment $m^{kd}$ and construct the reference solution $\alpha m^{k}$.

\begin{figure}[h]
\centering
    \makebox[\textwidth]{
     \begin{subfigure}{0.45\textwidth}
         \centering
         \includegraphics[trim=0 0 0 25, width=\textwidth]{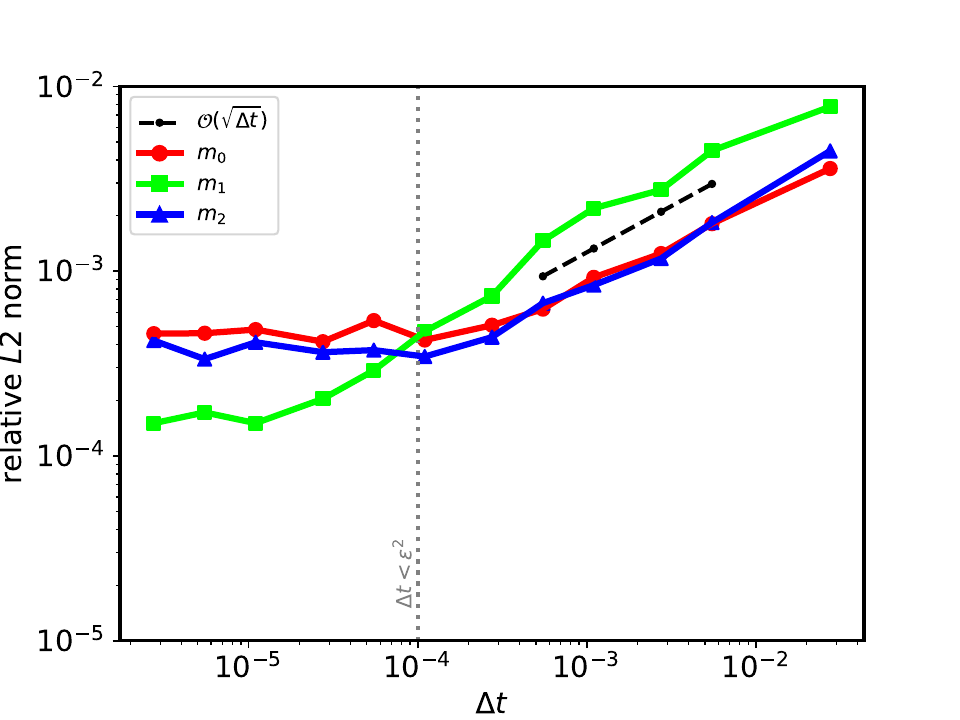}
         \caption{Fixed $\varepsilon=0.01$, varying $\dt$}
         \label{fig: k error up30 fixed e}
     \end{subfigure}
     \hspace{0.02\textwidth}
      \begin{subfigure}{0.45\textwidth}
         \centering
         \includegraphics[trim=0 0 0 25, width=\textwidth]{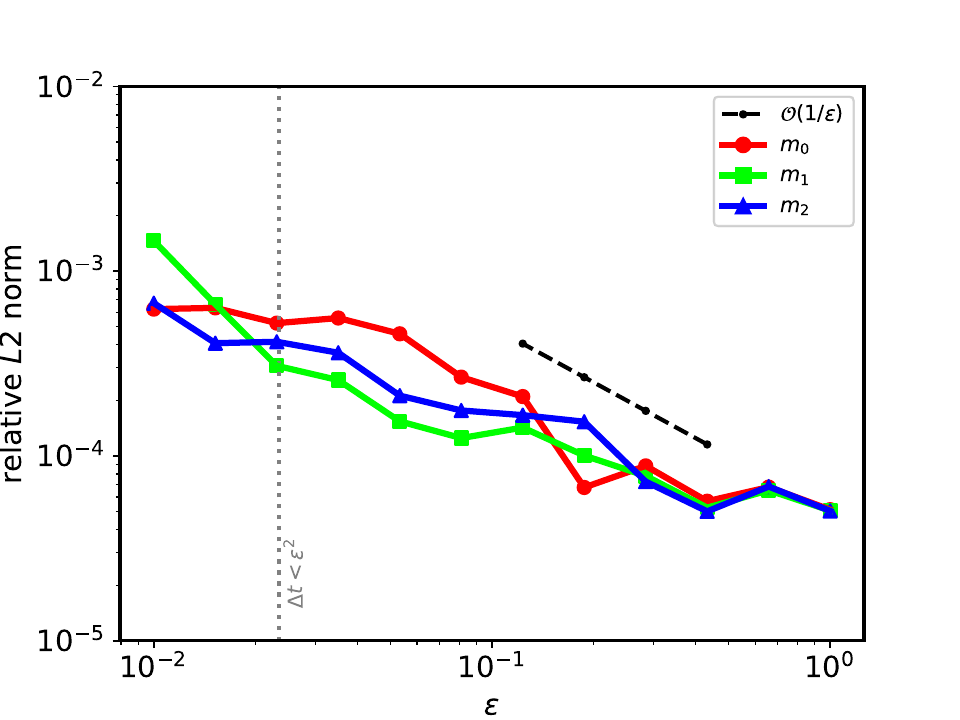}
         \caption{Fixed $\dt=0.0275/50$, varying $\epsilon$}
         \label{fig: k error up30 fixed dt}
     \end{subfigure}
     }
     \captionsetup{justification=centerlast}
     \caption{Kinetic part estimation error with $u_p=30$.}
     \label{fig: k part error up30}
\end{figure}

\begin{figure}[h]
\centering
    \makebox[\textwidth]{
     \begin{subfigure}{0.45\textwidth}
         \centering
         \includegraphics[trim=0 0 0 25, width=\textwidth]{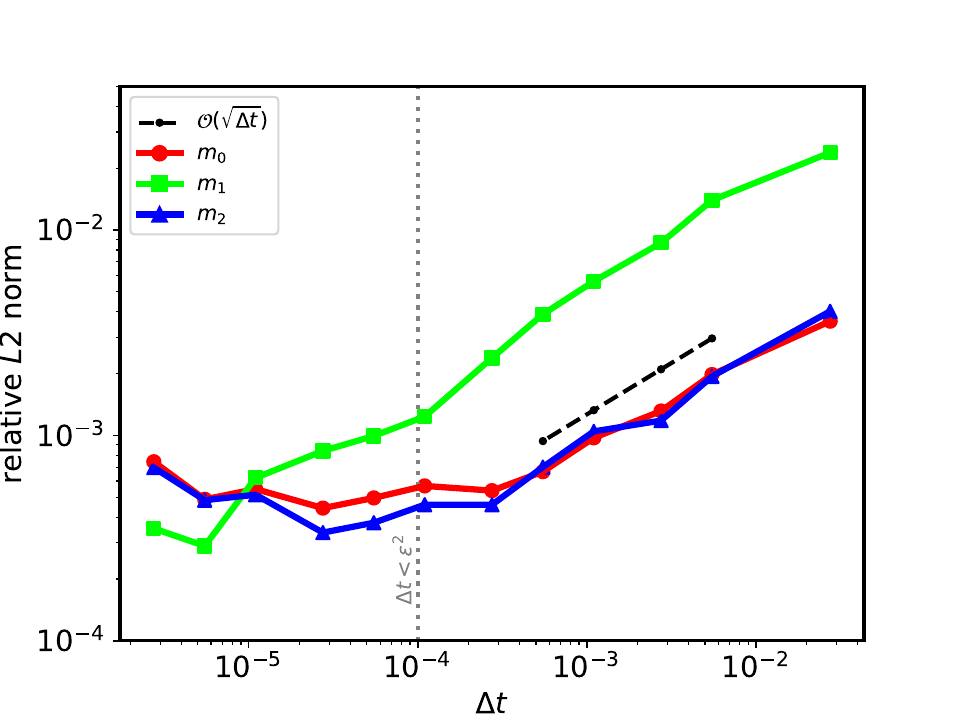}
         \caption{Fixed $\varepsilon=0.01$, varying $\dt$}
         \label{fig: k error up10 fixed e}
     \end{subfigure}
     \hspace{0.02\textwidth}
     \begin{subfigure}{0.45\textwidth}
         \centering
         \includegraphics[trim=0 0 0 25, width=\textwidth]{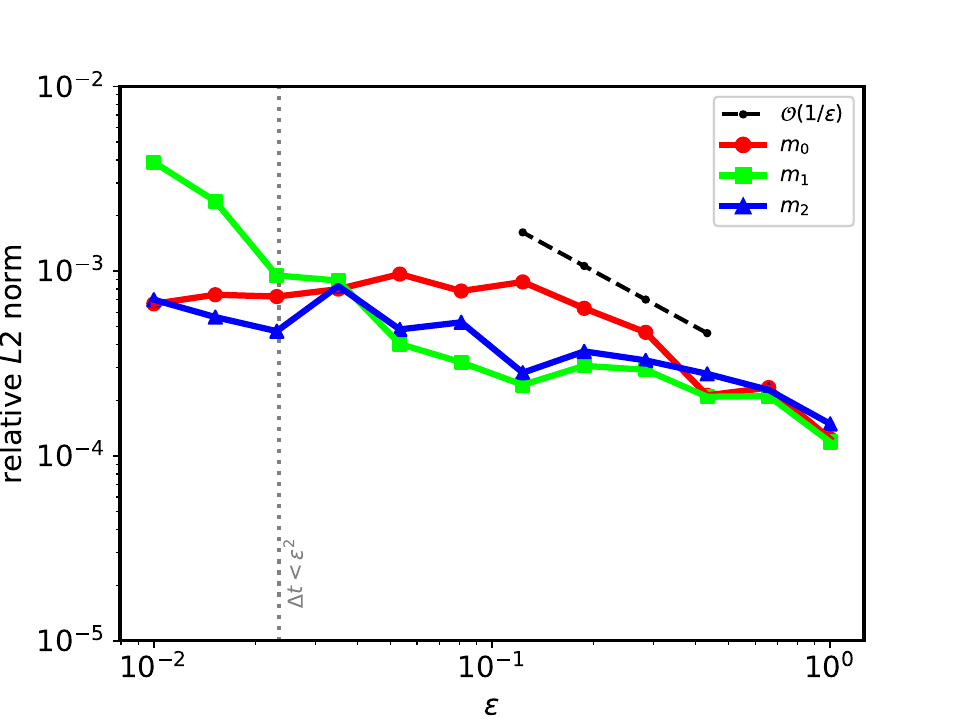}
         \caption{Fixed $\dt=0.0275/50$, varying $\varepsilon$}
         \label{fig: k error up10 fixed dt}
     \end{subfigure}
     }
     \captionsetup{justification=centerlast}
     \caption{Kinetic part estimation error with $u_p=10$. A smaller $u_p$ leads to a larger error of the $m_1$ moment}
     \label{fig: k part error up10}
\end{figure}

Due to the hybrid procedure of kinetic and diffusion motions in KDMC, isolating the kinetic part is not straightforward, since the $k$-th kinetic step is inherently influenced by the $i=0,\ldots,k-1$-th diffusive steps. 
To address this and obtain the approximate moment $m^{kd,k}$, we let the diffusive steps be mimicked using kinetic motions that we do not score. 
Specifically, within each $\dt$ time segment, the first kinetic step is scored by the MC estimator \eqref{eqn: K estimator}. 
Instead of a diffusive step \eqref{eqn: SDE update}, particles move kinetically as \eqref{eqn: a kinetic step} for the rest of the time without scoring. 
In this way, the simulation only has a negligible statistical error, and the estimation error of the diffusive part is removed, as the kinetic motion is unbiased and no diffusive motion exists.  
(Note that forcing particles to stop at $\dt$ with kinetic steps is unbiased, thanks to the memorylessness property of the exponential distribution for sampling the free-flight time $\tau$ used in Section \ref{sec: kinetic simulation}.) 
The reference moment $\alpha m^k$ is obtained by repeating the above process with different random seeds, since the kinetic simulation is used throughout, and scoring only the kinetic part (the first step within a time step) recovers the reference $\alpha m^{k}$. 
The error of the kinetic part is then calculated by comparing $m^{kd,k}$ and $m^k$ under the relative $L_2$ metric \eqref{eqn: rel l2 norm}. 



Figure \ref{fig: k part error up30} shows the kinetic error obtained in this manner with $10^7$ particles in both experiments for the reference and approximation. 
In all figures that follow, red, green, and blue represent the error of the moments $m_0, m_1$, and $m_2$, respectively. 
The vertical dashed line indicates the point $\dt=\varepsilon^2$ where the error is expected to stagnate.
Figure \ref{fig: k error up30 fixed e} presents the relative $L_2$ error against the time step $\dt$ with the scaling parameter $\varepsilon=0.01$. 
The error scales as $\BO(\sqrt{\dt})$ initially and stagnates when $\dt\ll\varepsilon^2$. 
The stagnation starts around $\dt=\varepsilon^2$ as expected.
Next, we fix $\dt=\bar{t}/50$ (i.e., $50$ time steps) and vary $\varepsilon$ in Figure \ref{fig: k error up30 fixed dt}. 
The error grows at the rate $\BO(1/\varepsilon)$ as $\varepsilon$ decreases, and stagnates when $\dt\gg\varepsilon^2$. 
The stagnation starts again around $\dt=\varepsilon^2$. 
In both cases, the stagnation occurs at roughly the same magnitude as predicted in Section \ref{sec: kinetic error e}.
In total, the results are consistent with the analytical error \eqref{eqn: k error dt} and \eqref{eqn: k error e} in Section \ref{sec: error of kinetic part}.

As described in Section \ref{sec: error of kinetic part}, the $m_1$ moment has a larger error than the $m_0$ and $m_2$ moments if $u_p$ is small. To show this, let $u_p=10$ instead of $30$, and repeating the experiment above, we get the convergence plot shown in Figure \ref{fig: k part error up10}. It can be seen that the $m_0$ and $m_2$ moments are similar to those shown in Figure \ref{fig: k part error up30}, but the $m_1$ moment has a greater error in the regime $\dt\gg\varepsilon^2$.

\subsection{Estimation error: diffusive part}\label{sec: est diff}
In this section, we study the estimation error of the diffusive part numerically. 
We first illustrate its components: 
the model error \( \epsilon_m \), the spatial discretization error \( \epsilon_{x} \) in Section \ref{sec: diff part err 1} and the time discretization error \( \epsilon_{\dt} \) in Section \ref{sec: diff part err 2}. 
Subsequently, the estimation error of the diffusive part, including the error propagated from the initial error $\epsilon_i$, is illustrated in Section \ref{sec: diff part err 3}. 
The corresponding theoretical analysis can be found in Section \ref{sec: num. error of fluid model}.
As before, the three colors: red, green, and blue represent the error of the moments $m_0, m_1$, and $m_2$, respectively, in all plots. 

\subsubsection{Model error $\epsilon_m$ and discretization error $\epsilon_{x}$}\label{sec: diff part err 1}
Estimating the moments using the fluid model~\eqref{eqn: fluid-model}, which approximates the Boltzmann-BGK equation~\eqref{eqn: BBGK}, introduces a model error $\epsilon_m=\BO(\varepsilon^2)$. Moreover, the spatial discretization error $\epsilon_{x}$ may dominate when the solver of the fluid model is not accurate enough. 
In the numerical experiments, the approximate moments are estimated as \eqref{eqn: m0}-\eqref{eqn: m2} by solving the fluid model in Section \ref{sec: fluid model} via the upwind scheme.
To study the effect of spatial discretization, we use two mesh resolutions: $200$ and $1000$ grid cells. This allows us to observe the discretization error $\epsilon_{x}$.
For reference, we take the moments estimated by the kinetic method
with $N=10^8$ particles and $1000$ grid cells.
If fewer particles were used, the second-order convergence in space would be obscured by the statistical error in the reference solution.

The numerical results for both grid resolutions are presented in Figure \ref{fig: fluid model error}.
In both cases, we see that 
when $\varepsilon$ is large, the fluid model is not an accurate approximation, and the error does not decrease significantly. 
As $\varepsilon$ decreases, the expected second-order convergence, i.e., $\BO(\varepsilon^2)$, becomes evident. 
However, for $200$ cells, the error of $m_1$ stagnates near $\varepsilon=0.01$, indicating that another error dominates. This dominant error is the discretization error. 
This fact is confirmed in Figure~\ref{fig: fluid model error 1000 cells}, where increasing the number of cells to $1000$ reduces the error of $m_1$ by nearly an order of magnitude, and the errors of the other two moments also reduce slightly.

The obvious discretization error in $m_1$ moment is due to the spatial derivative term in its definition \eqref{eqn: m1}. 
In contrast, $m_0$ contains no derivative term, making it less sensitive to spatial discretization. Although the $m_2$ moment \eqref{eqn: m2} does have the derivative term, its non-derivative term has a much greater value when $\varepsilon$ is small. Consequently, the relative contribution of the discretization error is less significant for $m_2$.
\begin{figure} [h]
\centering
    \makebox[\textwidth]{
     \begin{subfigure}{0.45\textwidth}
         \centering
         \includegraphics[trim=0 0 0 25, width=\textwidth]{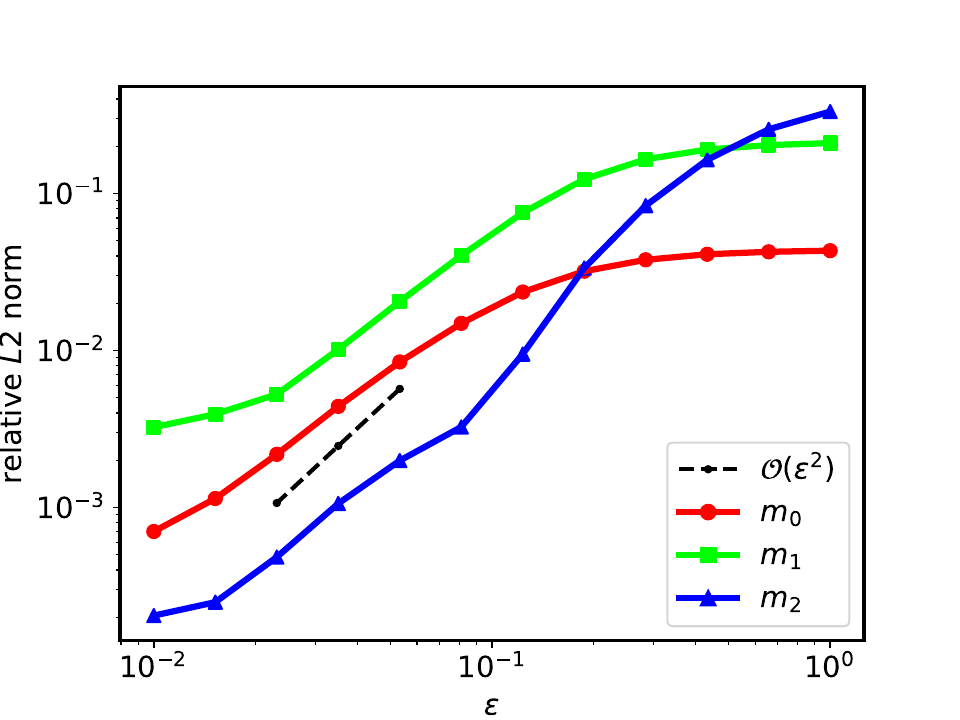}
         \caption{200 cells}
         \label{fig: fluid model error 200 cells}
     \end{subfigure}
        \hspace{0.02\textwidth}
     \begin{subfigure}{0.45\textwidth}
         \centering
         \includegraphics[trim=0 0 0 25, width=\textwidth]{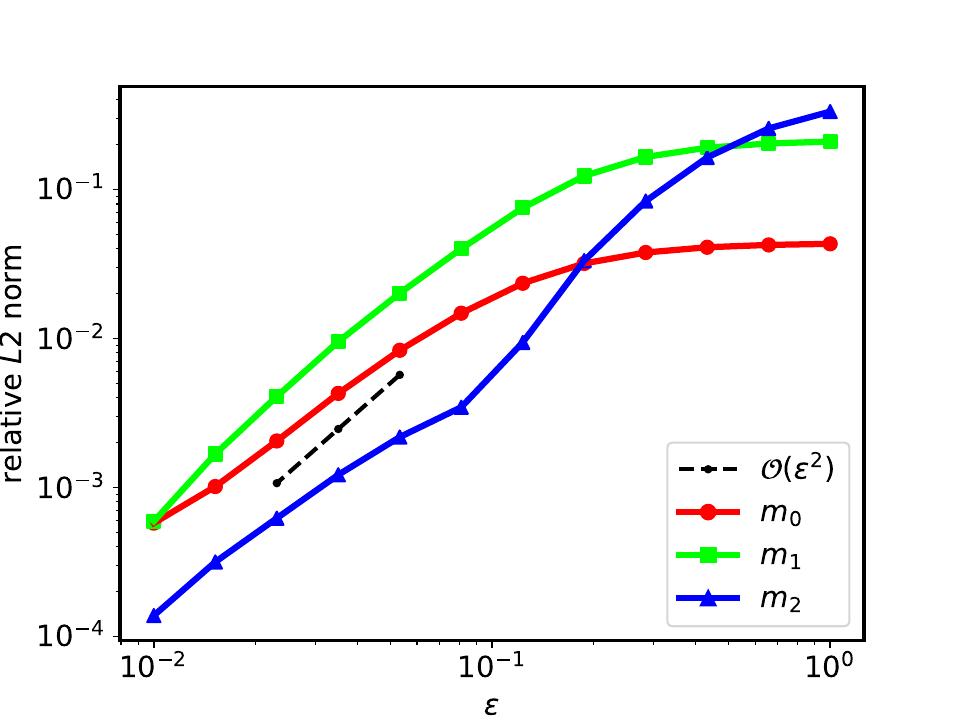}
         \caption{1000 cells}
         \label{fig: fluid model error 1000 cells}
     \end{subfigure}
     }
     \captionsetup{justification=centerlast}
     \caption{Model error \(\epsilon_m\) and discretization error \(\epsilon_{\Delta x}\). From \(200\) cells to \(1000\) cells, the error of the \(m_1\) moment at \(\varepsilon = 0.01\) decreases significantly, indicating the domination of the discretization error.}
     \label{fig: fluid model error}
\end{figure}

\subsubsection{Time evolution error $\epsilon_{\dt}$}\label{sec: diff part err 2}
As analyzed in Section \ref{sec: num. error of fluid model}, only a statistical error exists when approximating the evolution time of the fluid estimation by the averaging with a homogeneous background, i.e., $\epsilon_{\dt}=\eta=\BO(1/\sqrt{I})$, where $I$ is the number of particles.
To verify $\epsilon_{\dt}=\eta$ numerically, we first isolate the error of the diffusive part from the total estimation error, 
and then isolate the time evolution error $\epsilon_{\dt}$ from the error of the diffusive part. 
In the diffusive part, the error can be decomposed as $\epsilon_d = \epsilon_m + \epsilon_{\dt} + \epsilon_x  +\epsilon_{i}$ (see \eqref{eqn: fluid est convergence}). 
Among these, only $\epsilon_{\dt}$ and the propagated error $\epsilon_i$ depend on the time $t$. We then need to remove the propagated error from the KDMC simulation $\epsilon_i$ for focusing solely on
$\epsilon_{\dt}$. 
This can be done similarly to the strategy used in Section \ref{sec: est kinetic}, that is, mimicking the diffusive motion with the kinetic motion. 

In particular, to construct the reference solution, we fix the time step size $\dt$ in the kinetic method as in the experiment in Section \ref{sec: est kinetic}. 
In each $\dt$, the first flight is not scored, but the remaining mimicked diffusive part is scored. 
As a result, we obtain the moments only contributed by the diffusive part, which is thus the reference.
For the approximation solution, we use the KDMC simulation, but replace the diffusive motion with kinetic motions, which turns out to be another kinetic simulation with the fixed $\dt$. 
Then, we assume only the starting position of a diffusive step is known as the real diffusive motion, 
so that the fluid estimation described in Section \ref{sec: fluid estimaion} can be applied. 
In this setting, there is no propagated error $\epsilon_i$, since all particle movements follow the unbiased kinetic simulation. By scoring only the diffusive part, we get the separated error $\epsilon_{\dt}$, in terms of $\dt$.

 \begin{figure}[h]
 \centering
    \makebox[\textwidth]{
     \begin{subfigure}{0.45\textwidth}
         \centering
         \includegraphics[trim=0 0 0 25, width=\textwidth]{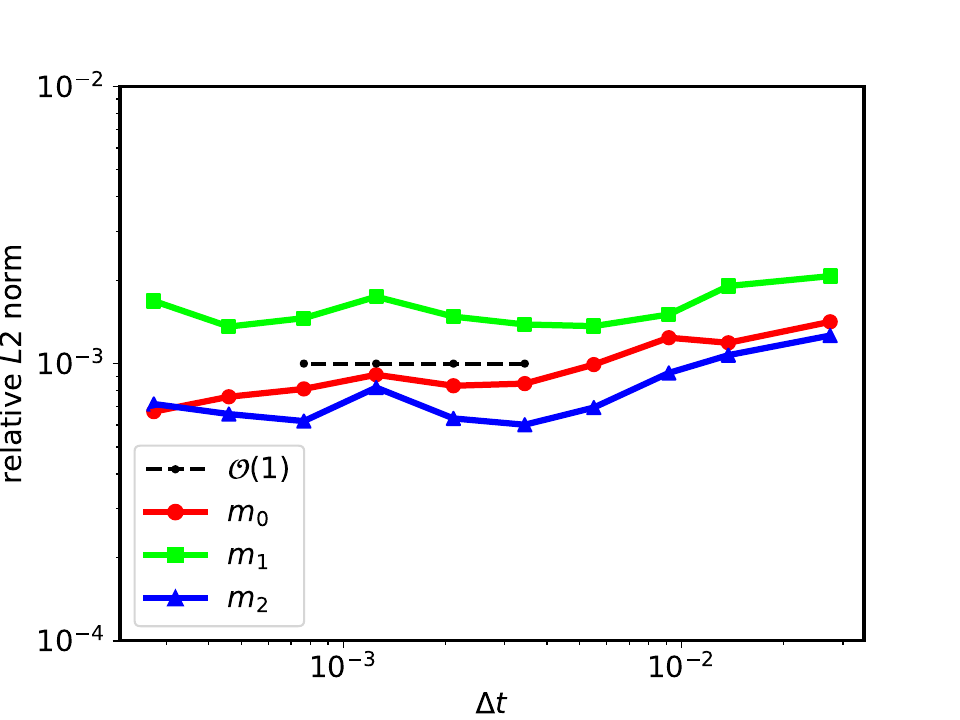}
         \caption{Fixed $\varepsilon=0.01$, varying $\dt$}
         \label{fig: dt error ep}
     \end{subfigure}
     \hspace{0.02\textwidth}
     \begin{subfigure}{0.45\textwidth}
         \centering
         \includegraphics[trim=0 0 0 25, width=\textwidth]{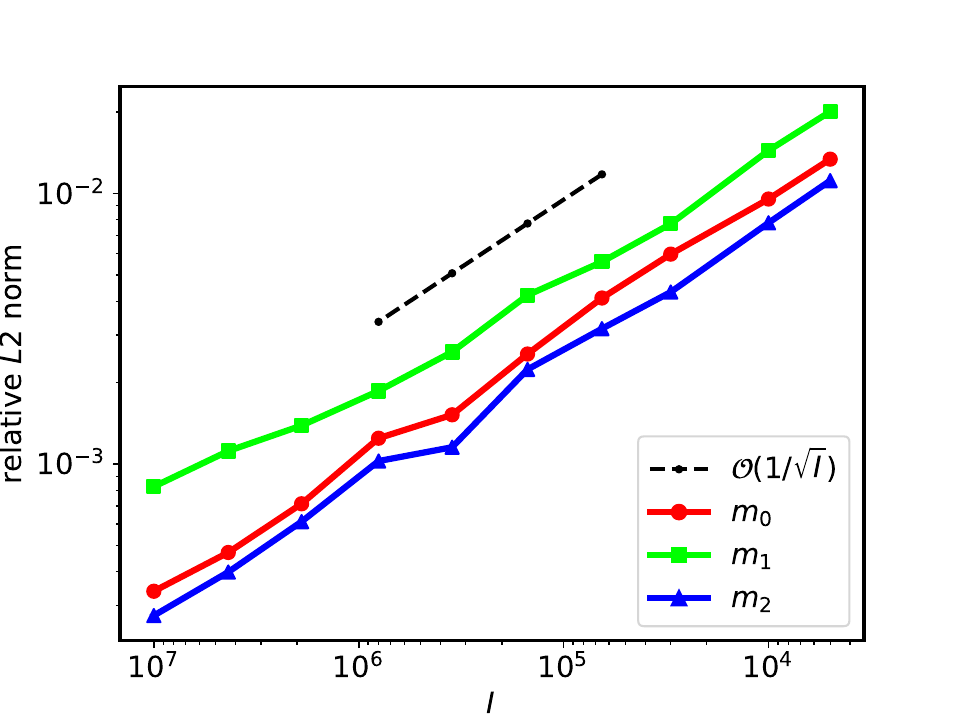}
         \caption{Fixed $\dt=0.0275/30$, varying the number of particles $I$}
         \label{fig: dt error ndt}
     \end{subfigure}
     }
     \captionsetup{justification=centerlast}
     \caption{The time evolution error $\epsilon_{\dt}$. This error is of order $\BO(1/\sqrt{N})$ and not depend on $\dt$.}
     \label{fig: e_dt error}
\end{figure}

The numerical results are displayed in Figure \ref{fig: e_dt error}.
Figure~\ref{fig: dt error ep} presents the relative $L_2$ error \eqref{eqn: rel l2 norm} between the constructed reference solution and the approximate solution described above as a function of the time step \( \dt \).  
It shows that the error \( \epsilon_{\dt} = \mathcal{O}(1) \) with respect to \( \dt \), confirming that it is independent of the time step.  
Furthermore, plotting the error against the number of particles $I$ in Figure \ref{fig: dt error ndt}, we can see that $\epsilon_{\dt}=\eta=\BO(1/\sqrt{I})$.
The two experiments confirm that $\epsilon_{\dt}$ is purely a statistical error and does not depend on the choice of time step $\dt$.

\subsubsection{Diffusive error $\epsilon_d$}\label{sec: diff part err 3}
We have shown the components $\epsilon_m, \epsilon_{x}$, and $\epsilon_{\dt}$ of the diffusive part of the estimation error in Sections~ \ref{sec: diff part err 1} and ~\ref{sec: diff part err 2}.
Here, we illustrate this error $\epsilon_d$ as defined in \eqref{eqn: e_k e_d def}, and simultaneously examine the propagated error $\epsilon_i$.
In \eqref{eqn: e_k e_d def}, the approximate moment $m^{kd,d}$ is the moment without scoring the kinetic part in KDMC.
The reference moment $(1-\alpha)m^{k}$, given by the kinetic method, is obtained by fixing the time step $\dt$ and without scoring the first flight.

The estimation error of the diffusive part is provided in \eqref{eqn: fluid est convergence}. 
For focusing on the effects of \( \dt \) and \( \varepsilon \), we employ a sufficiently large number of particles with \( I=10^7 \) and a spatial mesh with 200 grid cells, 
ensuring that both the statistical and discretization errors are negligible. 
Hence, only the model error \( \epsilon_m \) and the propagated error \( \epsilon_i \) remain significant.
The error $\epsilon_d$ is illustrated in Figure \ref{fig: diffusive part error up30}. 
Let $\varepsilon=0.01$, the error as a function of $\dt$ is shown in Figure \ref{fig: diffusive error ep}. 
The vertical dashed line indicates the point $\dt=\varepsilon^2$.
It can be seen that the error scales as $\BO(1/\dt)$ when $\dt\gg\varepsilon^2$, and as $\BO(\dt)$ when $\dt\ll\varepsilon^2$, which displays the convergence behavior of $\epsilon_i$ in \eqref{eqn: initial error} and confirms the existence of $\epsilon_i$, since other error components $\epsilon_m, \epsilon_{x}$, and $\epsilon_{\dt}$ do not have this shape of convergence.
Next, we fix $\dt =\Bar{t}/5$ and plot the error against $\varepsilon$ in Figure \ref{fig: diffusive error ndt}. 
The error curves exhibit a similar shape to the model error displayed in Figure \ref{fig: fluid model error 200 cells}, i.e., the error decreases only when $\varepsilon$ is small. 
This indicates that the model error $\epsilon_k$ dominates.

In Figure \ref{fig: diffusive error ep}, the error of the $m_1$ moment does not decrease as $\BO(\dt)$ in the regime $\dt\ll\varepsilon^2$ due to the domination of the error in the reference solution given by the kinetic method. If a larger mean velocity $u_p$ is used, the error of $m_1$ will decrease, such as $u_p= 100$ (see \cite{tangPythonCodeThis2025} for this case). 
The effect of $u_p$ for the kinetic simulation is briefly mentioned in Section \ref{sec: error of kinetic part} and demonstrated in Section \ref{sec: est kinetic}.
To maintain consistency and avoid repetition, we still use the test case with $u_p=30$ as in the other experiments.

 \begin{figure}[h]
 \centering
    \makebox[\textwidth]{
     \begin{subfigure}{0.45\textwidth}
         \centering
         \includegraphics[trim=0 0 0 25, width=\textwidth]{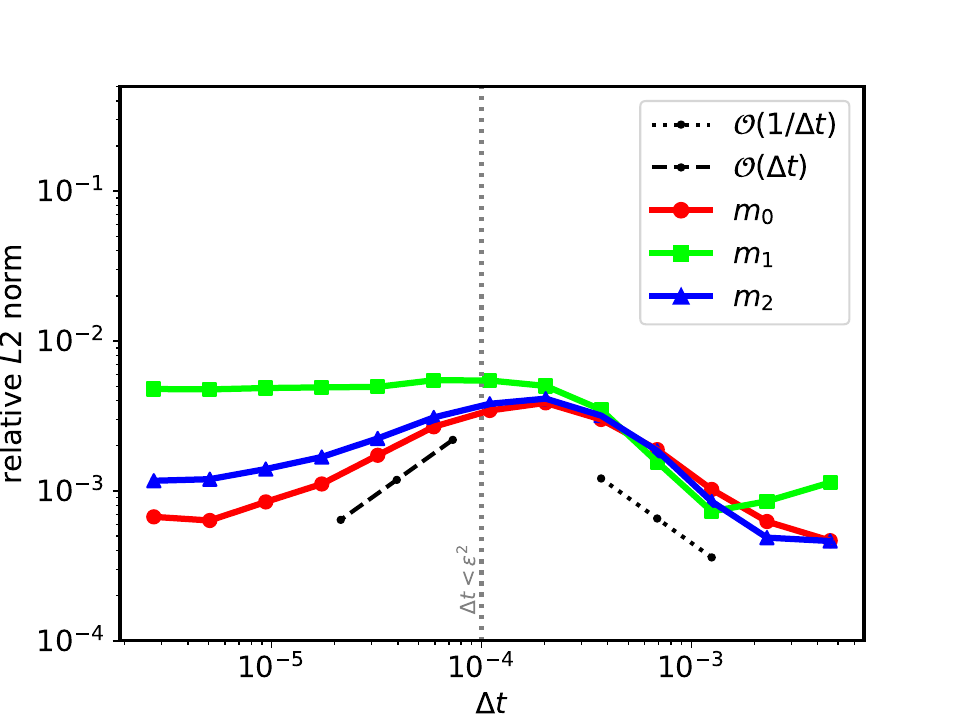}
         \caption{Fixed $\varepsilon=0.01$, varying $\dt$}
         \label{fig: diffusive error ep}
     \end{subfigure}
     \hspace{0.02\textwidth}
     \begin{subfigure}{0.45\textwidth}
         \centering
         \includegraphics[trim=0 0 0 25, width=\textwidth]{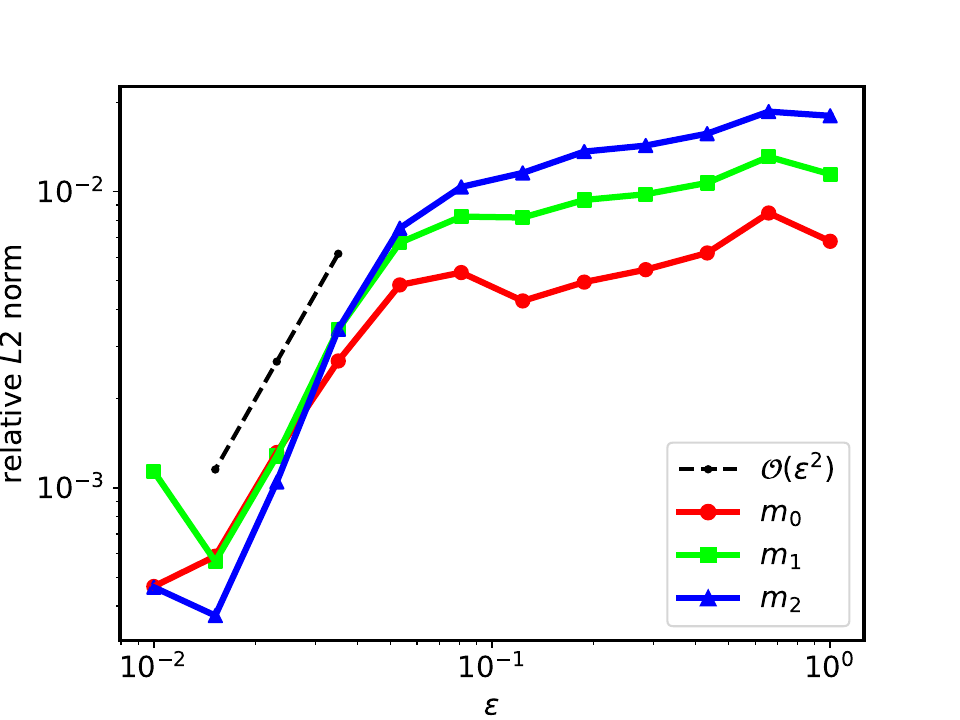}
         \caption{Fixed $\dt=\Bar{t}/5$ with $\Bar{t}=0.0275$, varying $\varepsilon$}
         \label{fig: diffusive error ndt}
     \end{subfigure}
     }
     \captionsetup{justification=centerlast}
     \caption{The diffusive part estimation error $\epsilon_d$.}
     \label{fig: diffusive part error up30}
\end{figure}
\subsection{Estimation error}\label{sec: estimation error}
We now present numerical results for the total estimation error of KDMC, defined in \eqref{eqn: e_kd}, with its error bound given in \eqref{eqn: err kd dt fixed}.
The presentation is organized to separate the effects of the time step \( \dt \) and the diffusive scaling parameter \( \varepsilon \).  
KDMC is expected to be more accurate than the fluid method described in Section \ref{sec: fluid model}, so the fluid method is compared in this experiment. 
The kinetic method is again the reference solution.
In all plots, solid lines represent the estimation errors of KDMC, while dashed lines correspond to the errors of the fluid method. 
As before, red, green, and blue represent the error of $m_0, m_1$, and $m_2$ moments, respectively. 
The vertical dashed line indicates the point where $\dt=\varepsilon^2$. 

In terms of the scaling parameter $\varepsilon$, the total estimation error $\epsilon_{kd}(\varepsilon) = \BO(1/\varepsilon)$ when $\dt\ll\varepsilon^2$, and $\epsilon_{kd}(\varepsilon) = \BO(\varepsilon^2)$ when $\dt\gg\varepsilon^2$.
The corresponding convergence is shown in Figure \ref{fig: est error e}, with $K=\bar{t}/\dt=1,5,25,$ and $75$ time steps used in Figures
 \ref{fig: est error n=1}-\ref{fig: est error n=75}, respectively.
If the time step $\dt$ is large such that $\dt\gg\varepsilon^2$, e.g., on the left side of the vertical line in Figures \ref{fig: est error n=1} and \ref{fig: est error n=5}, the diffusive part dominates KDMC, and the error convergence is of order $\BO(\varepsilon^2)$.  
Especially, if only one time step is used, i.e., $\dt=\bar{t}$, the dashed lines and the solid lines coincide when $\varepsilon\rightarrow0$ since KDMC converges to the fluid method.  
If the time step $\dt$ is small such that $\dt\ll\varepsilon^2$, e.g., on the right side of the vertical line in Figures \ref{fig: est error n=25} and \ref{fig: est error n=75}, 
the kinetic part dominates KDMC, and the error increases as $\BO(1/\varepsilon)$. 
Due to the domination of the kinetic part, where the fluid model \eqref{eqn: fluid-model} is not accurate, KDMC has a significantly lower error than the fluid method.

\begin{figure}[h]
    \centering
    \begin{subfigure}[b]{0.45\textwidth}
        \centering
        \includegraphics[trim=0 0 0 25, width=\textwidth]{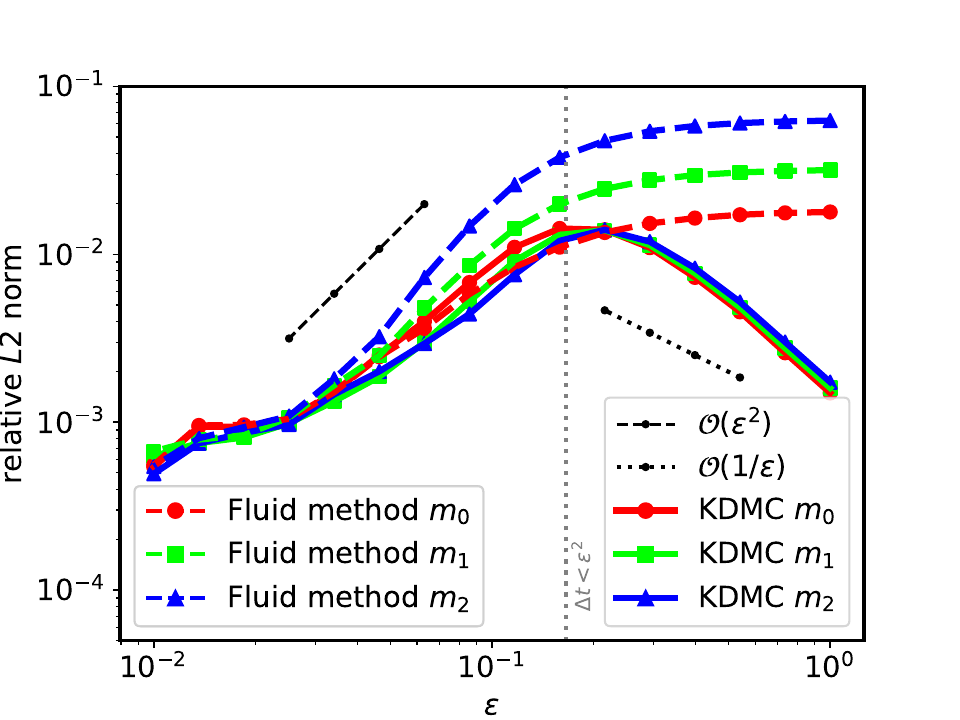}
        \caption{Fixed $\dt=\bar{t}/1$, varying $\varepsilon$}
        \label{fig: est error n=1}
    \end{subfigure}
    \begin{subfigure}[b]{0.45\textwidth}
        \centering
        \includegraphics[trim=0 0 0 25, width=\textwidth]{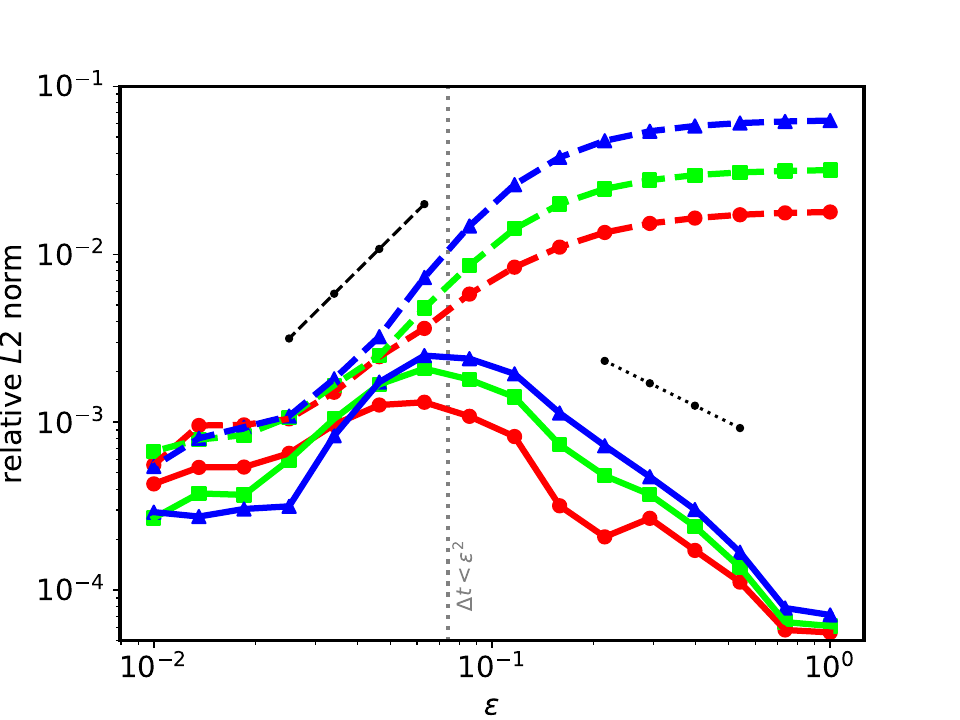}
        \caption{Fixed $\dt=\bar{t}/5$, varying $\varepsilon$}
        \label{fig: est error n=5}
    \end{subfigure}


    \begin{subfigure}[b]{0.45\textwidth}
        \centering
        \includegraphics[width=\textwidth]{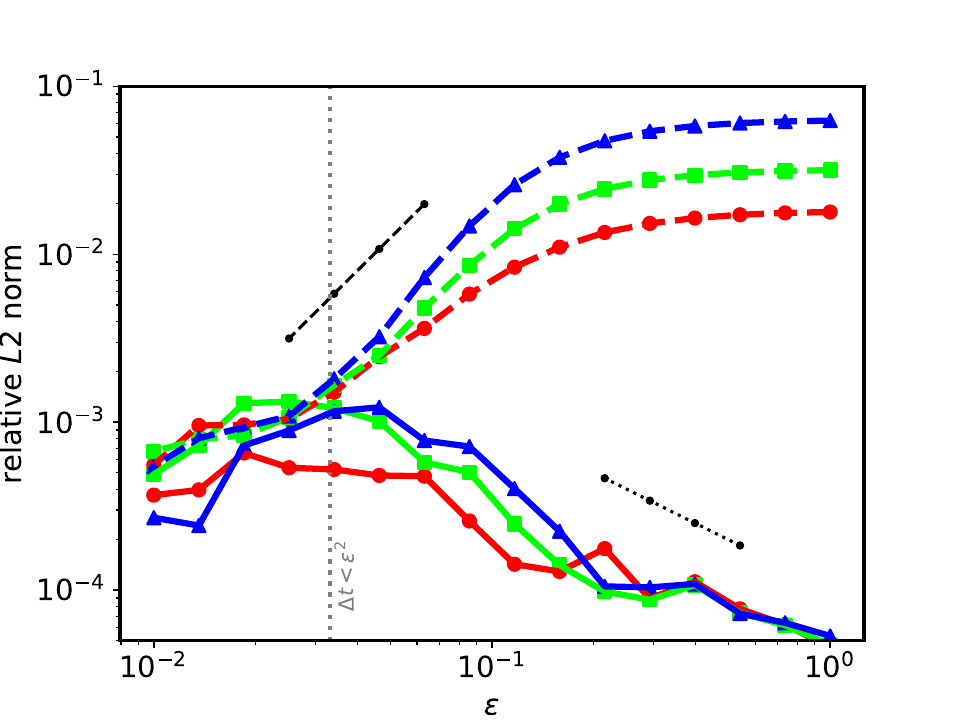}
        \caption{Fixed $\dt=\bar{t}/25$, varying $\varepsilon$}
        \label{fig: est error n=25}
    \end{subfigure}
    \begin{subfigure}[b]{0.45\textwidth}
        \centering
        \includegraphics[width=\textwidth]{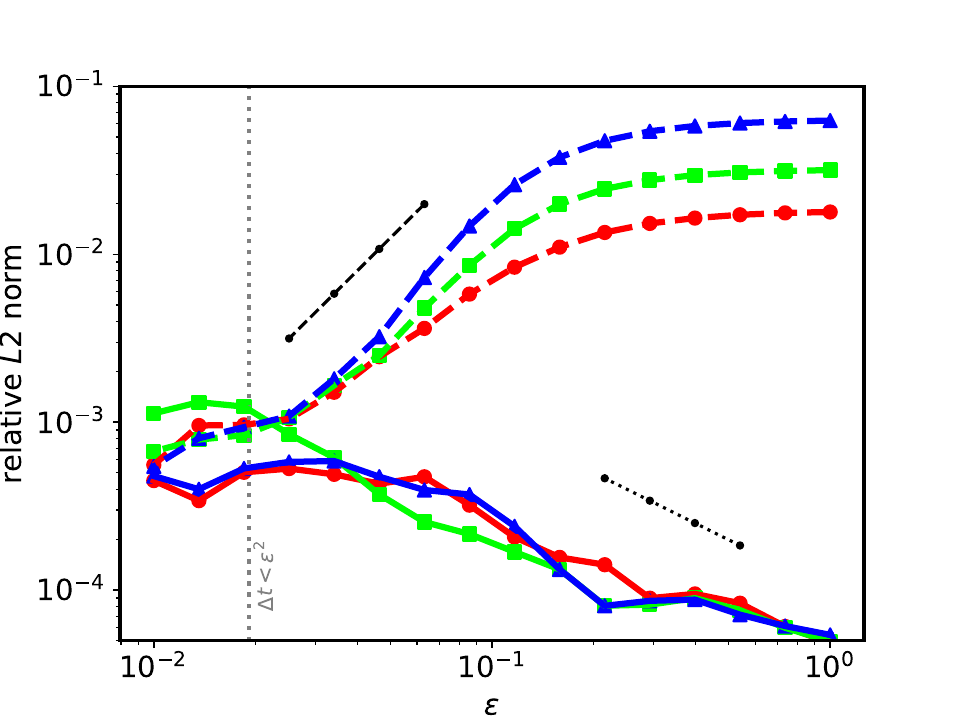}
        \caption{Fixed $\dt=\bar{t}/75$, varying $\varepsilon$}
        \label{fig: est error n=75}
    \end{subfigure}

    \captionsetup{justification=centerlast}
    \caption{The total estimation error $\varepsilon_{kd}$ as $\varepsilon\rightarrow0$. The plots are arranged in row-major order with $1, 5, 25, $ and $75$ time segments.}
    \label{fig: est error e}
\end{figure}

As for the time step $\dt$, the total estimation error $\epsilon_{kd}(\dt) = \BO(\dt)$ when $\dt\ll\varepsilon^2$, and $\epsilon_{kd}(\dt) = \BO(1/\dt)$ when $\dt\gg\varepsilon^2$. 
The convergence is displayed in Figure \ref{fig: est error dt}, with fixed values of $\varepsilon = 0.1$, $0.05$, $0.02$, and $0.01$ shown in Figures \ref{fig: est error e=0.1}–\ref{fig: est error e=0.01}, respectively.
The dashed lines are flat, indicating that the error of the fluid method is independent of the time step.
When $\dt\ll\varepsilon^2$, such as the left side of the vertical line in Figures \ref{fig: est error e=0.1} and \ref{fig: est error e=0.05}, the KDMC error converges with the order $\BO(\dt)$. 
When $\dt\gg\varepsilon^2$, such as the right side of the vertical line in Figures~\ref{fig: est error e=0.05} and ~\ref{fig: est error e=0.02}, this error grows with the order $\BO(1/\dt)$. 
Note that this convergence is not immediately apparent, as the KDMC error is already low and close to the reference error level in this homogeneous test case, thereby obscuring the convergence. 
This convergence can be observed more clearly in the fusion-relevant heterogeneous test case in Section \ref{sec: fusion test case}.
Finally, when $\varepsilon=0.01$, shown in Figure \ref{fig: est error e=0.01}, KDMC converges to the fluid model. Thus, the errors of both methods are close.  

In conclusion, this experiment confirms our expectation that KDMC is more accurate than the fluid method, 
except in the high-collisional regime, where KDMC converges to the fluid solution and both methods exhibit similar levels of accuracy.

\begin{figure}[h]
    \centering
    \begin{subfigure}[b]{0.45\textwidth}
        \centering
        \includegraphics[trim=0 0 0 20, width=\textwidth]{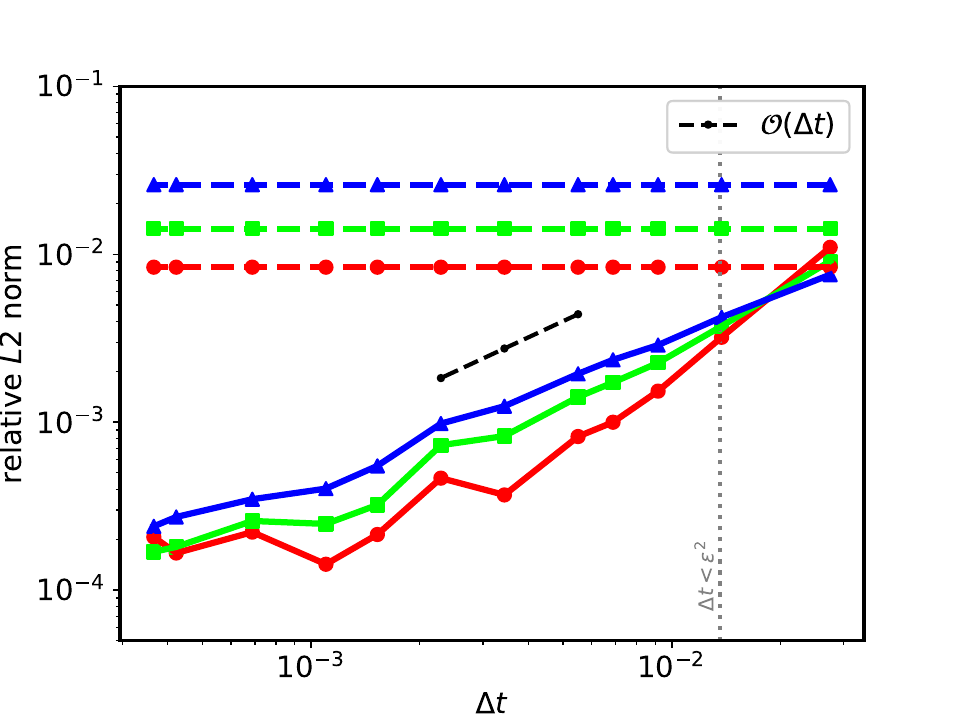}
        \caption{Fixed $\varepsilon\approx0.1$, varying $\dt$}
        \label{fig: est error e=0.1}
    \end{subfigure}
    \begin{subfigure}[b]{0.45\textwidth}
        \centering
        \includegraphics[trim=0 0 0 30, width=\textwidth]{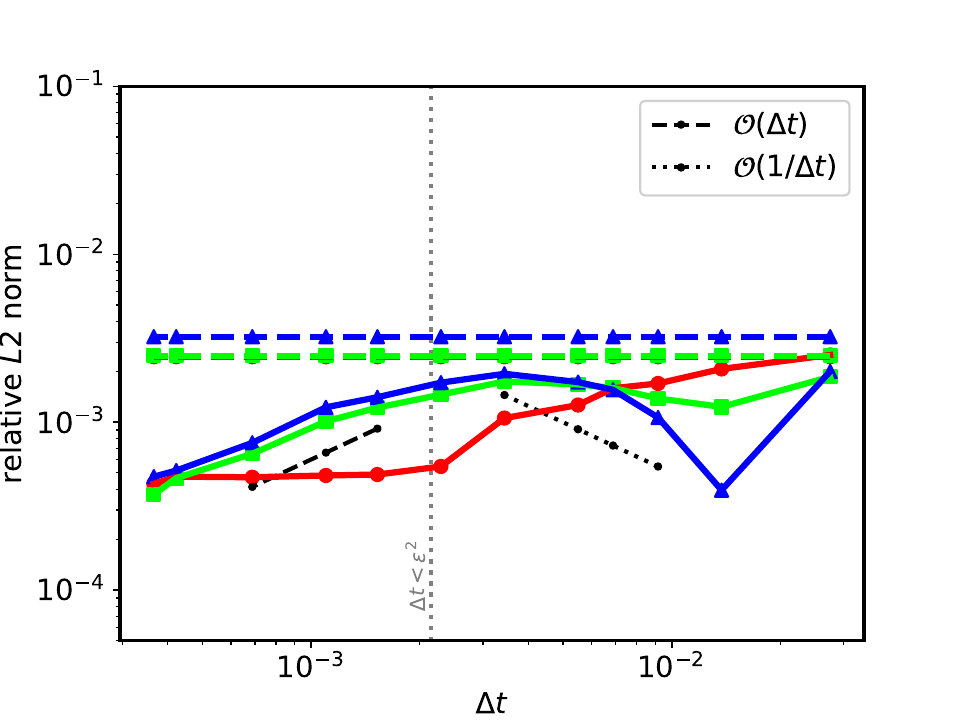}
        \caption{Fixed $\varepsilon\approx0.05$, varying $\dt$}
        \label{fig: est error e=0.05}
    \end{subfigure}

    \begin{subfigure}[b]{0.45\textwidth}
        \centering
        \includegraphics[width=\textwidth]{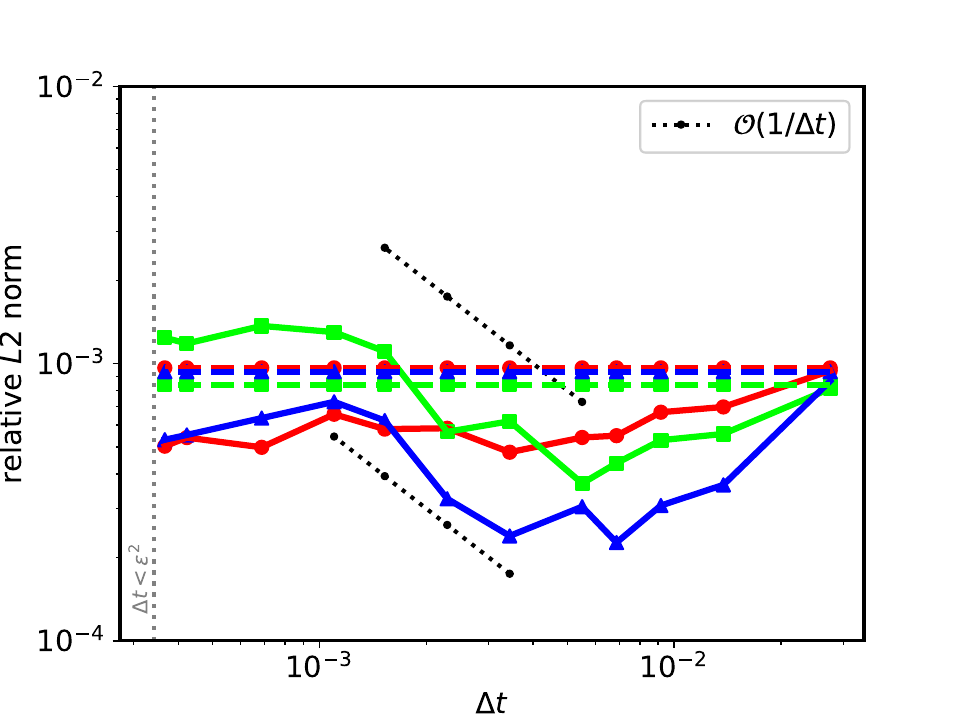}
        \caption{Fixed $\varepsilon\approx0.02$, varying $\dt$}
        \label{fig: est error e=0.02}
    \end{subfigure}
    \begin{subfigure}[b]{0.45\textwidth}
        \centering
        \includegraphics[width=\textwidth]{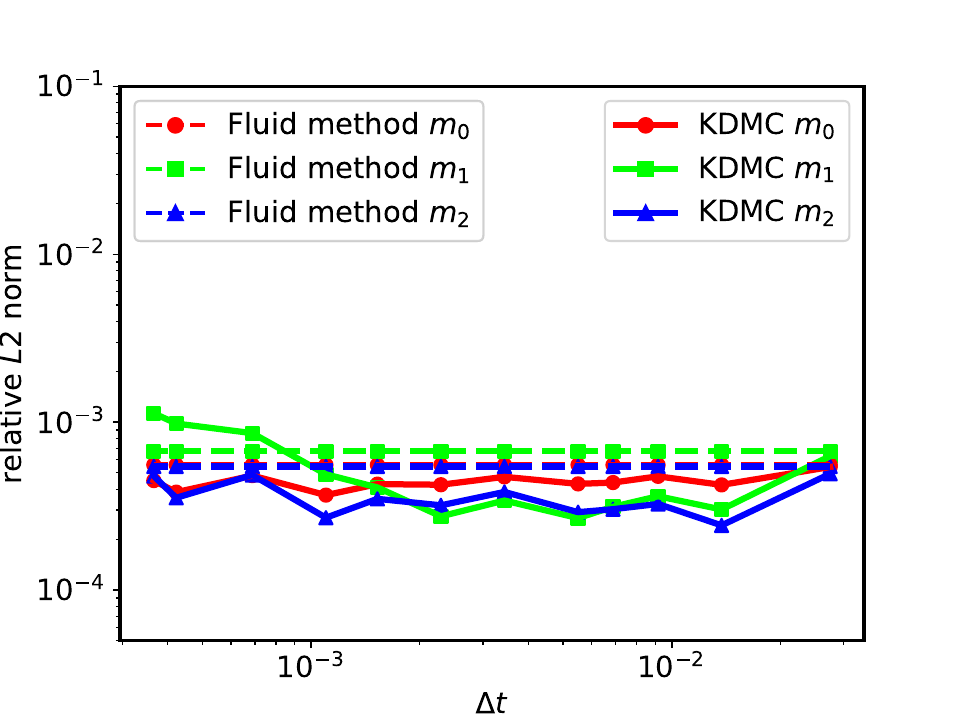}
        \caption{Fixed $\varepsilon=0.01$, varying $\dt$}
        \label{fig: est error e=0.01}
    \end{subfigure}

    \captionsetup{justification=centerlast}
    \caption{Total estimation error $\epsilon_{kd}$ as $\dt \to 0$. The plots are arranged in row-major order for $\varepsilon \approx 0.1$, $0.05$, $0.02$, and $0.01$. In subplot (b), the expected convergence rate of $\BO(1/\dt)$ is less apparent, as the KDMC error is already low and close to the reference error level. Clear convergence behavior is observed in the fusion-relevant test case, as predicted by our theory.}
    \label{fig: est error dt}
\end{figure}

\subsection{Fusion-relevant heterogeneous test case}\label{sec: fusion test case}
The numerical experiments presented earlier are based on a homogeneous background. 
However, as explained in Section \ref{sec: ana estimation error}, if the moderate assumption $R(x)=\BO(1/\varepsilon^2)$ is satisfied, the total estimation error \eqref{eqn: case e_kd} and further \eqref{eqn: err kd dt fixed} also hold in a heterogeneous background.  
In this section, we demonstrate the effectiveness of KDMC and our analysis for the heterogeneous background using a fusion-relevant test case.

In the plasma edge region of a Tokamak, particles interact with the solid wall and undergo reflection, necessitating the use of reflective boundary conditions. However, the study of such boundary conditions within the KDMC framework is still an open research
question, and we leave this question for future work. Therefore, we consider a test case that adopts the profile from a realistic fusion scenario \cite{horstenComparisonFluidNeutral2016c}, but employs the periodic boundary condition instead.

In this test case, the particle system is simulated on the periodic domain $x\in[0, 1]$, from $t=0$ s to $\bar{t}=0.001$ s. The initial density is given by $\rho(x,t=0) = 1 + 1/(2\pi)\times\sin(2\pi x)$ [m$^{-1}$]. 
The variance of the plasma velocity and the collision rate are set to the values used in \cite{horstenComparisonFluidNeutral2016c}, namely, 
\begin{equation}\label{eqn: hetero bg}
    	\sigma_p^2(x) = \frac{eT_i(x)}{m_p} \quad [\mathrm{m}^2\mathrm{s}^{-2}], \quad \text{and} \quad R(x) =  \rho_i \cdot 3.2\times10^{-15}\left(\frac{T_i(x)}{0.026}\right)^{1/2} \quad [\mathrm{s}^{-1}], 
\end{equation}
where $e\approx 1.60\times10^{-19}$~J$\cdot$eV$^{-1}$ is the value of one electronvolt, $m_p\approx 1.67 \times 10^{-27}$~kg is the plasma particle mass, 
and $T_i(x) = 5.5 + 4.5\times \cos(2\pi x)$~[eV] is the temperature of ions, 
which reaches $10\,\text{eV}$ at the domain boundaries and equals $1\,\text{eV}$ at the center of the domain. 
The meaning and units of the remaining constants can be found in \cite{lovbakMultilevelAdjointMonte2023}.
The ion density $\rho_i = 10^{21}$ m$^{-1}$ is a constant. 
The background \eqref{eqn: hetero bg} is modeled as being close to the limit in a diffusive scaling. To make this modeling assumption explicit, we introduce the scaled quantities
\begin{equation}
    \tilde\sigma_p^2(x) = \frac{10^{-7}}{\varepsilon^2}\sigma_p^2, \quad \text{and} \quad \tilde R(x) = \frac{10^{-7}}{\varepsilon^2} R(x),
\end{equation}
given that $\sigma_p^2(x)$ and $R(x)$ have an order of magnitude of $10^7$. In the experiment, we vary $\varepsilon$ from $10^0$ to $10^{-3.5}$. 
Lastly, the mean plasma velocity is chosen as $ u_p(x) = 100 + 1/(6\pi)\times\sin(6\pi x)$~[ms$^{-1}$]. 

Using the kinetic method as a reference and the fluid method as the comparison, the convergence is displayed in Figure \ref{fig: hetero est error}. 
The same color and line conventions apply as in Section \ref{sec: estimation error}. 
The point where the convergence changes in a homogeneous background is around $\dt=1/R\approx\varepsilon^2$. 
In this heterogeneous background, we choose this point as $\dt=\text{mean}(1/\tilde{R}(x))$, the mean of $1/\tilde{R}(x)$, indicated by the vertical dashed line.

Fixing the time step size $\dt=\Bar{t}/85$, the error against the diffusive scaling parameter $\varepsilon$ is shown in Figure \ref{fig: hetero est error e}.
In this test case, the error of the fluid method (dashed lines) starts to converge only when $\varepsilon$ is around $10^{-3}$, while KDMC  (solid lines) has smaller errors for all $\varepsilon$. 
We see again, as in the homogeneous case in Section \ref{sec: estimation error}, 
that the total KDMC estimation error (solid lines) $\epsilon_{kd}(\varepsilon) = \BO(1/\varepsilon)$ when $\dt\ll\varepsilon^2$, and $\epsilon_{kd}(\varepsilon) = \BO(\varepsilon^2)$ when $\dt\gg\varepsilon^2$ as predicted in \eqref{eqn: err kd dt fixed}. 
When $\varepsilon=10^{-3.5}$, which is the fusion test case in \cite{horstenComparisonFluidNeutral2016c}, KDMC outperforms the fluid model in terms of error by one order of magnitude. 
In the non-high collisional regime, KDMC could be even more accurate.  
Note that the specific choice of the time step size $\dt$ is made because the results are easier to interpret visually. 
The same convergence can be observed with other time step sizes (see more experiments at \cite{tangPythonCodeThis2025}). 

Fixing \( \varepsilon = 0.0072 \), the error varying with \( \Delta t \) is illustrated in Figure~\ref{fig: hetero est error dt}. The analytical convergence rate from \eqref{eqn: err kd dt fixed} is clearly observable: when \( \Delta t \ll \varepsilon^2 \),  \(\epsilon_{kd}(\dt) = \mathcal{O}(\Delta t) \); whereas when \( \Delta t \gg \varepsilon^2 \), \( \epsilon_{kd}(\dt)=\mathcal{O}(1/\Delta t) \). 
For all choices of $\dt$, the error of KDMC is lower than that of the fluid method.
Especially for $\dt\geq10^{-4}$, the error of KDMC is more than one order of magnitude lower than that of the fluid method.

In conclusion, the performance of KDMC and the associated fluid estimation follows our analysis in Section \ref{sec: estimation sec}. 
In the heterogeneous case, the analyzed algorithm outperforms the fluid method even in the high-collision regime.  

 \begin{figure}[h]
 \centering
    \makebox[\textwidth]{
     \begin{subfigure}{0.45\textwidth}
         \centering
         \includegraphics[trim=0 0 0 30, clip, width=\textwidth]{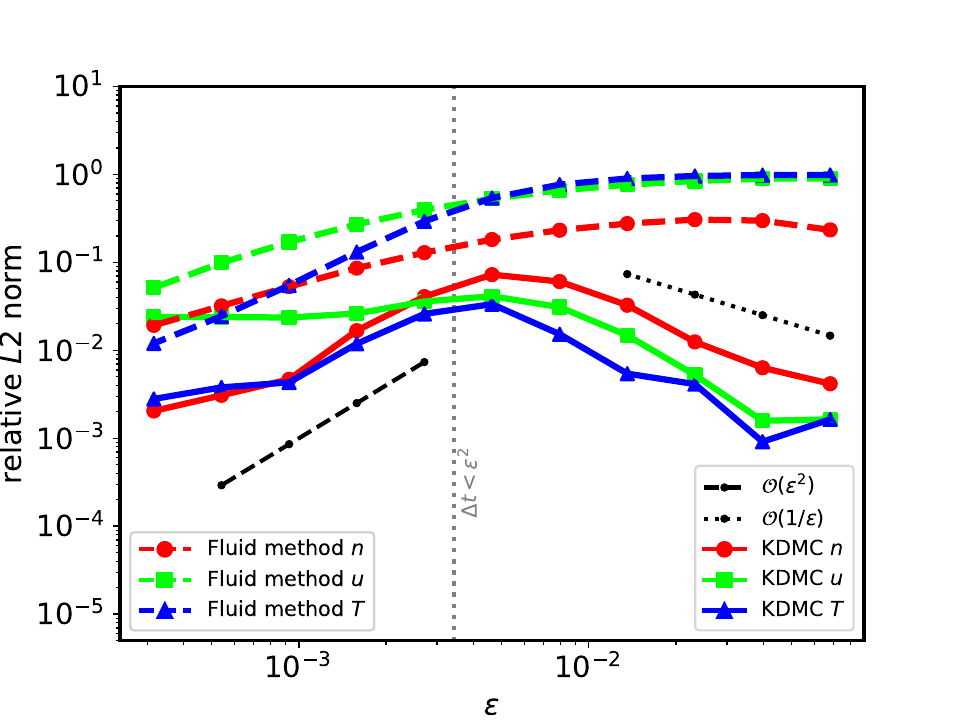}
         \caption{Fixed $\dt=\Bar{t}/85$, varying $\varepsilon$}
         \label{fig: hetero est error e}
     \end{subfigure}
     \hspace{0.02\textwidth}
     \begin{subfigure}{0.45\textwidth}
         \centering
         \includegraphics[trim=0 0 0 30, clip, width=\textwidth]{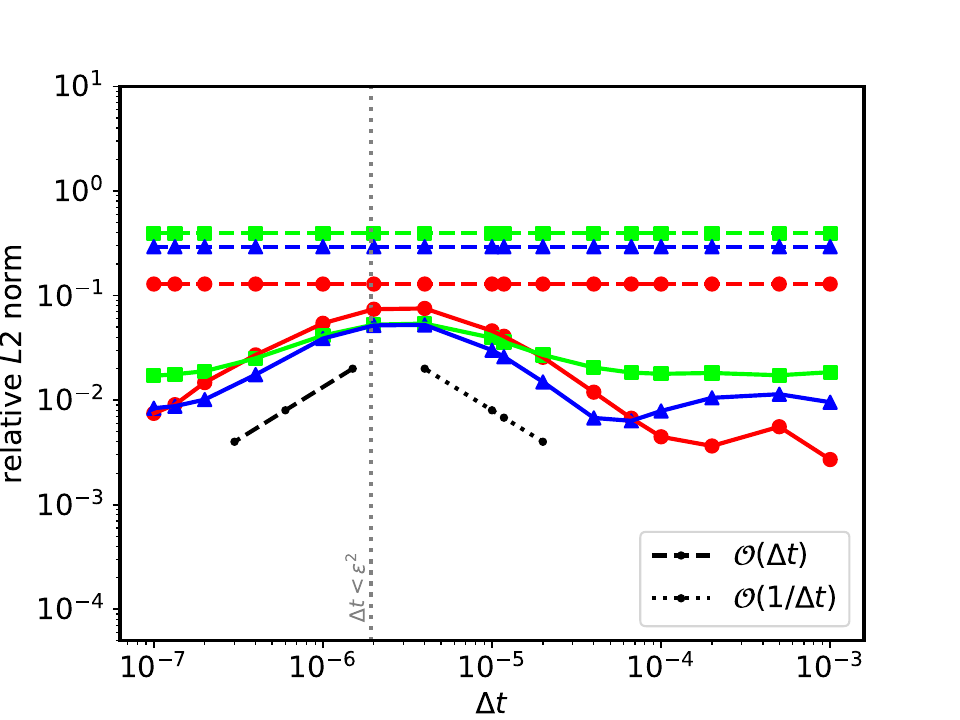}
         \caption{Fixed $\varepsilon=0.0027$, varying $\dt$}
         \label{fig: hetero est error dt}
     \end{subfigure}
     }
     \captionsetup{justification=centerlast}
     \caption{The total estimation error $\epsilon_{kd}$ of the fusion-relevant test case.}
     \label{fig: hetero est error}
\end{figure}

\subsection{Computational cost of KDMC}\label{sec: computational cost}
The KDMC method is expected to offer higher accuracy than the fluid model while being computationally more efficient than the kinetic method. The accuracy is demonstrated via the homogeneous test case in Section \ref{sec: estimation error} and the fusion-relevant test case in Section~\ref{sec: fusion test case}. In this section, we discuss the computational cost.

We begin by analyzing the simulation stage when the trajectories of particles are generated. 
We assume the computational cost of a kinetic step~\eqref{eqn: a kinetic step} and a diffusive step~\eqref{eqn: SDE update} is comparable, as both steps first generate one or two random numbers and update the particle’s position, as shown in Algorithm~\ref{alg: KDMC with f}.
Therefore, we treat each step as a unit computational operation.
The kinetic simulation updates the position along with each collision. Thus, if we assume the collision rate $R=1/\varepsilon^2$, the number of operations, denoted as $C_k$, is $C_k = R\Bar{t}=\Bar{t}/\varepsilon^2$ with $\Bar{t}$ the simulation time and $R\Bar{t}$ the total number of collisions.
KDMC has two positional updates in each $\dt$; hence, the number of operations, denoted as $C_{kd}$, is $C_{kd} = 2\Bar{t}/\dt=2K$, where $K$ is the number of time steps of size $\dt$.
The KDMC simulation has fewer operations than the kinetic simulation if $C_{kd}<C_k$, which gives the condition $\dt>2\varepsilon^2$. 
It indicates roughly the regime where the error of the diffusive part dominates. For instance, the left side of the vertical dashed line in Figure \ref{fig: hetero est error e}, and the right side of it in Figure \ref{fig: hetero est error dt}. 

When the estimation stage is included, moments in the kinetic method can be estimated concurrently with the simulation (see Algorithm \ref{alg: KDMC with f}), incurring negligible additional cost.
As for KDMC, the fluid estimation is performed after the simulation. This estimation solves the fluid model \eqref{eqn: fluid-model} with the evolution time $\hat\theta$ bounded by the time step $\dt$. 
In the meantime, the computational cost of solving the fluid model in the estimation stage is typically negligible compared with the cost of the KDMC simulation.
Therefore, the computational cost is decided by the simulation stage in both the kinetic method and KDMC, and the speed-up achieved by using KDMC instead of the kinetic method is effectively quantified by the ratio of operations $C_k/C_{kd}\approx \dt/\varepsilon^2$.

Table~\ref{tab: timing} presents the computation time for the fusion-relevant test case introduced in Section~\ref{sec: fusion test case}, with $\varepsilon=0.0027$, corresponding to Figure~\ref{fig: hetero est error dt}.
The first three columns display the number of time steps, the simulation time, and the estimation time measured in seconds, respectively.  It is evident that the estimation time is negligible compared to the simulation time. 
The fourth column presents the speed-up, compared to the run-time of the reference kinetic method, which is $48066.03$ seconds. 
It can be readily verified that the speed-up is proportional to $1/K$ with $K$ the number of time steps. 
As $K$ increases such that $C_{kd} > C_k$, that is, $\Delta t = \bar{t}/K \leq 2\varepsilon^2$, KDMC becomes less efficient, resulting in a slowdown indicated in the gray-highlighted rows of the table.

\begin{table}[h]
\centering
\caption{Runtime of KDMC simulation with fluid estimation.  Speed-up is computed by comparing the total KDMC runtime to that of the kinetic method (\(48066.03\) seconds). Cases with a slowdown are highlighted in gray. }
\begin{tabular}{c|c|c|c}
\hline
\#time steps $K$ & Simulation time ($s$) & Estimation time ($s$) & Speed-up \\
\hline
1   & 1216.29  & 0.111105  & 39.47 \\
2   & 1874.68  & 0.052020  & 25.64 \\
5   & 3637.70  & 0.020168  & 13.22 \\
10  & 6686.51  & 0.011620  & 7.19 \\
15  & 9701.11  & 0.006695  & 4.95 \\
25  & 15758.76 & 0.004488  & 3.05 \\
50  & 30865.44 & 0.002064  & 1.56 \\
\rowcolor{gray!20}
85  & 51739.55  & 0.001479  & 0.93 \\
\rowcolor{gray!20}
100 & 60388.68  & 0.001100  & 0.80 \\
\hline
\end{tabular}

\label{tab: timing}
\end{table}
%

We finish this section by remarking that the test case above is performed in a one-dimensional problem in space. 
If a two- or three-dimensional case is considered, the computational cost of solving the fluid model in the estimation stage may be significant if a large $\dt$ is used. 
Nevertheless, we expect that the cost of the KDMC simulation will still dominate. 
The analysis presented in this work can guide users in choosing an appropriate time step $\dt$ for different application scenarios.

\section{Conclusion}\label{sec: conclusion}
In this work, we analyze the KDMC method combined with a fluid estimation procedure. A fully kinetic Monte Carlo simulation is used as a reference, and a purely fluid model serves as a comparison. 
In addition to the asymptotic analysis, we conduct numerical experiments to verify the theoretical results. Homogeneous test cases are employed to assess each individual error, as well as the total error.
Moreover, a fusion-relevant heterogeneous test case validates our analysis and the effectiveness of the analyzed algorithm for application-relevant problems.

In the first part of the analysis, we derive an error bound for the KDMC simulation based on the 1-Wasserstein distance. 
The error expression reveals the property that the KDMC simulation error remains low for both small and large diffusive scaling parameter $\varepsilon$ (corresponding to the high- and low-collision regimes, respectively), but exhibits a relatively large bias in intermediate regimes. 
A similar property holds for the time step $\dt$: the error is low when $\dt$ is either small or large, but relatively high in between. 
In particular, the bias reaches its peak around $\dt=\varepsilon^2$.

In the second part of the analysis, the estimation error is discussed. 
The estimation includes the kinetic part and the diffusive part.  
For the kinetic part, an unbiased MC estimator is used with only a statistical error. 
Besides the number of particles, this error is also affected by the time step size $\dt$, and the diffusive scaling parameter $\varepsilon$.  
We also observe that when the mean velocity of the background plasma is small, the statistical error of the $m_1$ moment is larger than that of the $m_0$ and $m_2$ moments. 
For the diffusive part, the fluid estimation procedure is applied, which estimates the moments by solving the corresponding fluid model. 
The initial condition is constructed from the collection of all initial positions of the diffusive steps, while the evolution time is the average duration of all of these steps. 
This procedure introduces several sources of error, including stochastic and deterministic errors, each of which is analyzed both analytically and numerically. 

To account for the mutual influence between the kinetic and diffusive parts, as well as the stochastic effect, dedicated experiments are designed to isolate and illustrate each individual error, confirming the validity of our analysis.
As expected, KDMC is more accurate than the purely fluid method. Notably, in the fusion case, KDMC achieves an order of magnitude higher accuracy than the fluid method.
Finally, the computational cost analysis shows that 
KDMC achieves a speedup over the kinetic method of a factor of $\dt/\varepsilon^2$, given that the cost of the fluid estimation is negligible.

This work provides a detailed analysis of KDMC and its associated fluid estimation. In future work, we plan to develop a multilevel scheme for the analyzed algorithm to accelerate the convergence. 
Moreover, we intend to derive a more accurate fluid model and integrate it into the KDMC framework to improve overall accuracy.

\appendix
\section{Pseudo-time simulation}\label{appendix: steady-state solution}
We compare two ways to simulate the steady-state Boltzmann-BGK equation
\begin{equation} \label{eqn: steady-state ss eq}
    v\partial_x \phi (x,v)= -R_i(x) \phi(x,v) + R_{cx}(x)\left(\intv \phi(x,v') dv' \mathcal{M}(v) - \phi(x,v)\right) + S(x,v),
\end{equation}
by the time-dependent equation with the standard MC method, where the distribution $\phi(x,v)$ is the steady-state solution. 
$R_i(x)$ and $R_{cx}(x)$ are the ionization and charge-exchange collision rates, respectively. $S(x,v)$ is the source.

\subsection{Terminal-time simulation}\label{sec: terminal simulation}
The first way follows the pseudo-time stepping method \cite{kelleyConvergenceAnalysisPseudoTransient1998b}. Consider the time-dependent equation
\begin{equation}\label{eqn: A1 bgk}
    \partial_t f(x,v,t) + v\partial_x f(x,v,t) = -R_i(x) f(x,v,t) + R_{cx}(x)\left(\mathcal{M}(v)\intv f(x,v',t) dv'  - f(x,v,t)\right) + S(x,v),
\end{equation}
with the unknown $f(x,v,t)$ and an initial guess $f(x,v,t=0)$. The solution at a sufficiently large time $t\approx\infty$, denoted as $f(x,v,\infty)$, is the steady-state solution, that is, $f(x,v,\infty)=\phi(x,v)$
the solution of \eqref{eqn: steady-state ss eq}.
When the standard MC method is employed, we simulate the time-dependent equation \eqref{eqn: A1 bgk} up to a sufficiently large terminal time $\Bar{t}$, 
and estimate the steady-state distribution $f(x,v,\Bar{t})$ by recording the number of particles in each phase-space cell at time $\Bar{t}$.

\subsection{Time-integrated simulation}
Alternatively, we can define $f(x,v,t)$ by
\begin{equation*}
    \int_0^\infty f(x,v,t) dt = \phi(x,v),
\end{equation*}
and solve 
\begin{equation}\label{eqn: steady-state phi}
    \partial_t f(x,v,t) + v\partial_x f(x,v,t) = -R_i(x) f(x,v,t) + R_{cx}(x)\left(\mathcal{M}(v) \intv f(x,v',t) dv'  - f(x,v,t) \right), 
\end{equation}
with the initial condition $f(x,v,t=0)=S(x,v)$ and the property
\begin{equation}\label{eqn: steady-state phi initial condition}
    f(x,v,t=\infty)=0,
\end{equation} 
due to the sink term $-R_i(x) f(x,v,t)$.
Integrating \eqref{eqn: steady-state phi} over time $t$ from $0$ to $\infty$, we have
\begin{equation*}
    f(x,v,t=\infty) - f(x,v,t=0) + v\partial_x \phi(x,v) = -R_i \phi(x,v) + R_{cx}(\mathcal{M}(v) \intv \phi(x,v) dv  - \phi(x,v)).
\end{equation*}
Substituting the initial condition $S(x,v)$ and the property \eqref{eqn: steady-state phi initial condition}, we obtain
\begin{equation}
    v\partial_x \phi (x,v)= -R_i(x) \phi(x,v) + R_{cx}(x)\left(\intv \phi(x,v') dv' \mathcal{M}(v) - \phi(x,v)\right) + S(x,v),
\end{equation}
which is exactly the steady-state equation \eqref{eqn: steady-state ss eq} we want to solve. Hence, we can get $\phi(x,v)$ by integrating $f(x,v,t)$, the solution of \eqref{eqn: steady-state phi}, over time $t$.
When using the standard MC, we again simulate the time-dependent equation up to a sufficiently large time $\Bar{t}$, 
but in this approach, we record the number of particles in each phase-space cell at all times from $t=0$ to $\Bar{t}$. 
Compared with the MC method in \ref{sec: terminal simulation}, which only records the number of particles at time $\Bar{t}$, the MC method in this section scores more samples and therefore achieves a lower variance (or error).
Moreover, although the latter improves statistical accuracy, it has almost the same computational cost as the former. 

For the purpose of algorithmic analysis, we neglect the ionization term $R_i(x)\phi(x,v)$, since KDMC is designed for systems involving charge-exchange collisions only. 
The ionization process can instead be handled separately using an operator-splitting approach \cite{macnamaraOperatorSplitting2016b, maesHilbertExpansionBased2023a}. As a result, the time-dependent equation \eqref{eqn: steady-state phi} reduces to the Boltzmann–BGK equation \eqref{eqn: BBGK} considered in this work.
\section{Mild formulation of Boltzmann-BGK equation}\label{appendix: mild formulation}
In this section, we derive the mild formulation \cite{dipernaCauchyProblemBoltzmann1989} of the Boltzmann-BGK equation \eqref{eqn: BBGK}. 
The Boltzmann-BGK equation  without the ionization term \eqref{eqn: BBGK} reads
\begin{equation}\label{eqn: BBGK no ionization}
\partial_t f(x,v,t) + v\partial_x f(x,v, t) = R\left(M[f](x,v,t)-f(x,v,t)\right),    
\end{equation}
with the initial condition
\begin{equation*}
    f(x,v,t=0) = f_0(x,v),
\end{equation*}
where 
\begin{equation}\label{eqn: m[f]}
    M[f](x,v,t) = M_p(v|x)\intv f(x,v', t)dv'.
\end{equation}
and $R$ the constant collision rate, since we consider a homogeneous background. Using the method of characteristics, Equation \eqref{eqn: BBGK no ionization} can be written as 
\begin{equation}\label{eqn: ODE}
    \frac{d }{d t} f(x(t), v, t) = R\left(M[f](x(t),v) - f(x(t),v)\right),
\end{equation}
along the path 
\begin{equation}\label{eqn: path}
    x(t) = x(s) + v\cdot(t-s),
\end{equation}
where $0\leq s \leq t$ and we have
\begin{equation}\label{eqn: ode initial}
    f(x(0),v,t=0) = f_0(x(t)-vt, v).
\end{equation}
We solve the ODE \eqref{eqn: ODE} by first multiplying the factor $e^{tR}$ on both side of \eqref{eqn: ODE} which gives
\begin{equation*}
    e^{Rt}\frac{d }{d t} f(x(t), v, t) +Re^{Rt}f(x(t),v)= Re^{Rt}M[f](x(t),v, t).
\end{equation*}
Equivalently, we have 
\begin{equation}\label{eqn: ddt}
    \frac{d}{dt}\left(e^{Rt}f(x(t), v, t) \right)= Re^{Rt}M[f](x(t),v,t).
\end{equation}
Then, integrating \eqref{eqn: ddt} over time from $0$ up to $t$ gives
\begin{equation}\label{eqn: mild_1}
    e^{Rt}f(x(t), v, t) = f(x(0), v, 0) + \int_0^t Re^{Rs}M[f](x(s),v,s)ds.
\end{equation}
Substituting \eqref{eqn: m[f]}, \eqref{eqn: path} and \eqref{eqn: ode initial} into \eqref{eqn: mild_1}, and rearranging terms, we obtain 
\begin{equation}
    f(x,v,t) = e^{-Rt}f_0(x-vt, v) + \int_0^t Re^{-R(t-s)}M(v|x-v(t-s))\int_{-\infty}^{\infty} f(x-v'(t-s), v', s) dv' ds,
\end{equation}
the mild formulation of the Boltzmann-BGK equation without the ionization term \eqref{eqn: BBGK no ionization}. Note that if the heterogeneous background $R=R(x)$ is considered, the derivation above still holds. We will have
\begin{equation*}
    f(x,v,t) = e^{\Lambda(t,0;x,v)}f_0(x-vt, v) + \int_0^t R(x-v(t-s))e^{\Lambda(t,s;x,v)}M[f](v|x-v(t-s))ds,
\end{equation*}
with 
\begin{equation*}
    \Lambda(t,0;x,v) = \int_s^t R(x-v(t-\tau))d\tau.
\end{equation*}

\section*{Acknowledgements}
This work has been carried out within the framework of the EUROfusion Consortium, funded by the European Union via the Euratom Research and Training Programme (Grant Agreement No 101052200 — EUROfusion). The views and opinions expressed herein do not necessarily reflect those of the European Commission.
Part of this research was funded by the Research Foundation Flanders (FWO) under grant G085922N.
Emil Løvbak was funded by the Deutsche Forschungsgemeinschaft (DFG, German Research Foundation) – Project-ID 563450842. 
The authors are also grateful to Vince Maes for valuable discussions.

\section*{CRediT authorship contribution statement}
\textbf{Zhirui Tang:} Conceptualization, Methodology, Software, Investigation, Visualization, Writing – Original Draft.  
\textbf{Julian Koellermeier:} Supervision, Writing – Review \& Editing.  
\textbf{Emil Løvbak:} Supervision, Methodology, Validation, Writing – Review \& Editing.  
\textbf{Giovanni Samaey:} Conceptualization, Supervision, Funding Acquisition, Writing – Review \& Editing.

\section*{Data availability}
The data and source code that support the findings of this study are publicly available at:  
\url{https://gitlab.kuleuven.be/u0158749/kdmc_and_fluid_estimation_analysis}

\bibliographystyle{elsarticle-num-names} 
\bibliography{Fluid_Estimator}

\end{document}